\newcommand{\tikzAngleOfLine}{\tikz@AngleOfLine}
\def\tikz@AngleOfLine(#1)(#2)#3{%
\pgfmathanglebetweenpoints{%
\pgfpointanchor{#1}{center}}{%
\pgfpointanchor{#2}{center}}
\pgfmathsetmacro{#3}{\pgfmathresult}%
}
\definecolor{linkred}{rgb}{0.75,0,0}
\definecolor{linkblue}{rgb}{0,0,0.75}
\theoremstyle{plain}
\newtheorem{theorem}{Theorem}
\newtheorem{proposition}{Proposition}[section]
\newtheorem{lemma}[proposition]{Lemma}
\newcommand{\bt}{\begin{theorem}}
\newcommand{\et}{\end{theorem}}
\theoremstyle{definition}
\newtheorem{definition}[proposition]{Definition}
\newtheorem{remark}[proposition]{Remark}
\newcommand{\beq}{\begin{equation}}
\newcommand{\eeq}{\end{equation}}
\newcommand{\bl}{\begin{lemma}}
\newcommand{\el}{\end{lemma}}
\newcommand{\cal}{\mathcal}
\newcommand{\Res}{\mathop{\,\rm Res\,}}
\newcommand{\ca}{\mathcal{A}}
\newcommand{\ce}{\mathcal{E}}
\newcommand{\cp}{{\cal P}}
\newcommand{\bc}{\mathbb{C}}
\newcommand{\bn}{\mathbb{N}}
\newcommand{\bp}{\mathbb{P}}
\newcommand{\bz}{\mathbb{Z}}
\newcommand {\h}{\hbar}
\newcommand{\modm}{\cal M}
\newcommand{\un}{1\!\!1}
\begin{document}
	
\title{Gromov-Witten invariants of $\bp^1$ coupled to a KdV tau function.}
\author{Paul Norbury}
\address{School of Mathematics and Statistics, University of Melbourne, VIC 3010, Australia}
\email{\href{mailto:norbury@unimelb.edu.au}{norbury@unimelb.edu.au}}
\thanks{}
\subjclass[2010]{32G15; 14D23; 53D45}
\date{\today}

\begin{abstract}
We consider the pull-back of a natural sequence of cohomology classes $\Theta_{g,n}\in H^{2(2g-2+n)}(\overline{\modm}_{g,n})$ to the moduli space of stable maps $\overline{\modm}_{g,n}(\bp^1,d)$.  These classes are related to the Br\'ezin-Gross-Witten tau function of the KdV hierarchy via
$Z^{BGW}(\h,t_0,t_1,...)=\exp\sum\frac{\h^{2g-2}}{n!}\int_{\overline{\modm}_{g,n}}\Theta_{g,n}\cdot\prod_{j=1}^n\psi_j^{k_j}\prod t_{k_j}$.  Insertions of the pull-backs of the classes $\Theta_{g,n}$ into the integrals defining Gromov-Witten invariants define new invariants which we show in the case of target $\bp^1$ are given by a random matrix integral and satisfy the Toda equation.
\end{abstract}

\maketitle

\tableofcontents

\section{Introduction}

Let $X$ be a projective algebraic variety and consider $(C,x_1,\dots,x_n)$ a connected smooth curve of genus $g$ with $n$ distinct marked points.  For ${\bf d} \in H_2(X,\bz)$ the moduli space of stable maps $\overline{\modm}_{g,n}(X,{\bf d})$ is defined by:
$$\overline{\modm}_{g,n}(X,{\bf d})=\{(C,x_1,\dots,x_n)\stackrel{f}{\rightarrow} X\mid f_\ast [C]={\bf d}\}/\sim$$
where $f$ is a morphism from a connected nodal curve $C$ containing distinct points $\{x_1,\dots,x_n\}$ that avoid the nodes.  Any genus zero irreducible component of $C$ with fewer than three distinguished points (nodal or marked), or genus one irreducible component of $C$ with no distinguished point, must not be collapsed to a point under $f$.  The quotient is by isomorphisms of the domain $C$ that fix each $x_i$ and commute with $f$.    The moduli space of stable maps has irreducible components of different dimensions but it has a virtual class which is a well-defined cycle in the Chow group of $\overline{\modm}_{g,n}(X,{\bf d})$ of dimension
$$ \dim[\overline{\modm}_{g,n}(X,{\bf d})]^{\text{vir}}=(\dim X-3)(1-g)+\langle c_1(X),{\bf d}\rangle +n.
$$  
When $X=\{\text{pt}\}$, $\overline{\modm}_{g,n}(X,{\bf d})=\overline{\modm}_{g,n}$ is the moduli space 
of genus $g$ stable curves---curves with only nodal singularities and finite automorphism group---with $n$ labeled points disjoint from nodes.  For $2g-2+n>0$ there is a forgetful map $p:\overline{\modm}_{g,n}(X,{\bf d})\to \overline{\modm}_{g,n}$, which maps a stable map to its domain curve followed by contraction of unstable components. 

In \cite{NorNew}, a collection of natural cohomology classes 
$$\Theta_{g,n}\in H^{2(2g-2+n)}(\overline{\cal M}_{g,n})$$ 
associated to the Br\'ezin-Gross-Witten tau function of KdV 
is defined---see Section~\ref{sec:theta}.   Consider the pull-back class $\Theta_{g,n}^X=p^*\Theta_{g,n}\in H^*(\overline{\modm}_{g,n}(X,{\bf d}))$ with respect to the forgetful map $p$.  We will treat $\Theta_{g,n}^X$ interchangeably as a cohomology class and a Chow class since $\Theta_{g,n}$ was constructed as a Chow class in \cite{NorNew}. This class cuts out a smaller virtual fundamental class of dimension 
\begin{equation} \label{thetdim}
\dim \left\{[\overline{\modm}_{g,n}(X,{\bf d})]^{\text{vir}}\cap \Theta_{g,n}^X\right\}=(\dim X-1)(1-g)+\langle c_1(X),{\bf d}\rangle.
\end{equation} 
Define the $\Theta$-Gromov-Witten invariants of a variety $X$ by coupling to the classes $\Theta_{g,n}$ as follows. For $2g-2+n>0$ define
$$ 
\left\langle \Theta\cdot\prod_{i=1}^n\tau_{b_i}(\alpha_i) \right\rangle ^{g}_{{\bf d}}:=\int_{[{\modm}_{g,n}(X,{\bf d})]^{vir}} \Theta^X_{g,n}\cdot\prod_{i=1}^n\psi_i^{b_i}ev_i^\ast(\alpha_i)
$$
where the classes $\alpha_i\in H^*(X)$ are pulled back by the evaluation maps
$ev_i:\overline{\modm}_{g,n}(X,{\bf d})\to X$ and $\psi_i=c_1(L_i)$ for the line bundle $L_i\to\overline{\modm}_{g,n}(X,{\bf d})$ with fibre the cotangent line $T^*_{x_i}C$ of the $i$th labeled point on the domain curve.   

When $2g-2+n\leq 0$, the $\Theta$-Gromov-Witten invariants are defined as follows.  For $(g,n)=(0,2)$ define the genus 0 two-point $\Theta$-Gromov-Witten invariants to coincide with the usual genus 0 two-point Gromov-Witten invariants of $X$.  For $(g,n)=(0,1)$ the genus 0 one-point $\Theta$-Gromov-Witten invariants are obtained from the genus 0 two-point invariants via the forgetful map---see Section~\ref{GW}.  For $(g,n)=(1,0)$, in the Fano case, which we need here, we define the invariants to vanish.    One property of the classes $\Theta_{g,n}$ is the vanishing of the genus zero classes $\Theta_{0,n}=0$.  Hence the genus 0 $\Theta$-Gromov-Witten invariants vanish for $n\geq 3$.

In this paper we mainly consider the target $X=\bp^1$.  In this case the dimension formula \eqref{thetdim} is independent of $g$ and $n$ and reduces to 
$$\dim \left\{[\overline{\modm}_{g,n}(\bp^1,d)]^{\text{vir}}\cap \Theta_{g,n}^{\bp^1}\right\}=2d$$
for $d\in\bn$ the degree of the map.  Let $1,\omega\in H^*(\bp^1)$ be the identity and the K\"ahler class.   Assemble the $\Theta$-Gromov-Witten invariants of $\bp^1$ into the free energy defined by
\begin{align}  \label{freenergy}
F_{\bp^1}^{\Theta}(\h,&s_0,s_1,...,t_0,t_1,...)=\sum_{g\geq 0} \h^{2g-2} F_g^{\Theta}=  \sum_{g,d}\h^{2g-2}\left\langle\Theta\cdot\exp\left\{\sum_{i\geq 0}^{\infty}\tau_i(\omega)s_i+\tau_i(1)t_i\right\}\right\rangle^g_d+\frac14\log{\h}\\
=&-\h^{-2}s_1(1-t_0)+\h^{-2}s_0^2+\frac14\log{\frac{\h}{1-t_0}}+\frac14\frac{s_1}{1-t_0}-\frac{\h^2}{64(1-t_0)^2}+...   \nonumber
\end{align}
and define the partition function $Z^{\Theta}_{\bp^1}(\h,\{s_k,t_k\})=\exp F^{\Theta}_{\bp^1}(\h,\{s_k,t_k\})$.  
The $\frac14\log{\h}$ term is included so that $Z_{\bp^1}(\h,\{s_k,t_k\})$ is homogeneous---see \eqref{freenerg} in Section~\ref{sec:cohft}.

The following theorem expresses the stationary invariants in terms of a random matrix integral.
\begin{theorem}  \label{partint}
Set $\h=N^{-1}$ and $t_k=0$ for $k\geq 0$ in the partition function.   Then
\begin{equation}   \label{partleg}
Z_{\bp^1}^{\Theta}(\h=N^{-1},\{s_k\},\{t_k=0\})=\frac{c}{N!}\int_{-2}^2\int_{-2}^2...\int_{-2}^2dx_1...dx_N\prod_{i<j}(x_i-x_j)^2\exp\left(N{\sum_{k\geq 0} s_k\sum_{i=1}^N x_i^{k+1}}\right)
\end{equation}
where $c\in\bc$ is a constant.
More precisely, $Z_{\bp^1}^{\Theta}$ coincides with an asymptotic expansion of the integral as $N\to\infty$.
\end{theorem}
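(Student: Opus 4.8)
The plan is to show that both sides possess a common asymptotic (genus) expansion governed by Eynard--Orantin topological recursion on one and the same spectral curve, and then to pin down the finitely many undetermined constants using the explicit low--order data displayed in \eqref{freenergy}.

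First I would treat the right--hand side. Writing the eigenvalue integral as the partition function of a formal Hermitian one--matrix model with eigenvalues confined to $[-2,2]$ and potential $V(x)=\sum_{k\ge 0}s_k x^{k+1}$, the standard loop--equation (or orthogonal--polynomial) analysis produces an asymptotic expansion $\log Z=\sum_{g\ge 0}N^{2-2g}\mathcal F_g(\{s_k\})+\tfrac14\log N^{-1}+\mathrm{const}$ whose coefficients are computed by topological recursion on the spectral curve attached to the planar resolvent. The hard walls at $\pm2$ force two simple branch points at $x=\pm2$, so the curve is rational and is uniformised by $x=z+z^{-1}$, sending $[-2,2]$ to the unit circle and rationalising $\sqrt{x^2-4}=z-z^{-1}$; the times $s_k$ enter only through the one--form $y\,dx$. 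I would record this curve, including how each $s_k$ deforms $y$, as the first main ingredient.

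Next I would treat the left--hand side. By the cohomological field theory structure of Section~\ref{sec:cohft}, the stationary $\Theta$--Gromov--Witten theory of $\bp^1$ (that is, $t_k=0$) is the $\Theta$--twist of the semisimple $\bp^1$--CohFT; by the Givental--Teleman classification together with the correspondence between semisimple CohFTs and topological recursion, $F_g^{\Theta}|_{t=0}$ is computed by topological recursion on a spectral curve read off from the Frobenius data. The $\bp^1$ quantum cohomology supplies exactly the uniformisation $x=z+z^{-1}$ with ramification at $z=\pm1$, i.e.\ $x=\pm2$, while the local model at each ramification point is the Bessel/BGW curve dictated by the classes $\Theta_{g,n}$ rather than the Airy/Kontsevich--Witten one; this is what replaces soft Gaussian confinement by the hard edges at $\pm2$ and produces the factor $\prod_{i<j}(x_i-x_j)^2$ on the interval. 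The insertions $\tau_k(\om)$ weighted by $s_k$ then deform $y\,dx$ in precisely the same way as the matrix potential $V$ does on the right, which is the content I would verify term by term.

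With both asymptotic expansions governed by topological recursion on the same spectral curve, the unstable contributions and the genus $0$ and genus $1$ free energies agree up to the usual additive and multiplicative ambiguities inherent to $\mathcal F_0$ and $\mathcal F_1$; I would fix these, together with the overall constant $c$ and the $\tfrac14\log\h$ normalisation, by matching the explicit terms $-\h^{-2}s_1+\h^{-2}s_0^2+\tfrac14\log\h+\tfrac14 s_1-\tfrac{\h^2}{64}+\dots$ in \eqref{freenergy} against the corresponding low--order coefficients of the integral. I expect the principal obstacle to be the second step: establishing rigorously that the $\Theta$--twist produces the Bessel (hard--edge) local behaviour at $x=\pm2$, and that the $s_k$--deformation of $y\,dx$ coincides on the two sides, since this is where the special role of the BGW tau function---and hence the precise appearance of $[-2,2]$---must be extracted from the CohFT. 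A cleaner alternative, consistent with the paper's emphasis on integrability, is to observe that the matrix integral is manifestly a Toda tau function (via orthogonal polynomials on $[-2,2]$) and to identify it with $Z^{\Theta}_{\bp^1}$ by matching both as solutions of the Toda hierarchy with the same string--equation initial data.
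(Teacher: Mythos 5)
Your overall strategy is the same as the paper's: identify both sides of \eqref{partleg} with the output of topological recursion on the single spectral curve $x=z+1/z$, $(x^2-4)y^2=1$ --- the right-hand side via the loop equations for the hard-wall (Legendre) ensemble established in \cite{BGuAsy}, the left-hand side via the Givental--Teleman decomposition of the $\bp^1$ CohFT and its identification with topological recursion \cite{DOSSIde}. However, the step you yourself flag as ``the principal obstacle'' --- proving that the $\Theta$-twist turns the Kontsevich--Witten (regular/Airy) local behaviour at the branch points into Br\'ezin--Gross--Witten (irregular/Bessel) behaviour, with the correct global $y$ --- is precisely the mathematical content of the theorem, and your proposal leaves it as an expectation rather than an argument. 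The paper closes it in two moves. First, Proposition~\ref{thetadecomp}: because $\Theta_{g,n+1}=\psi_{n+1}\cdot\pi^*\Theta_{g,n}$ (property \eqref{forget}), multiplying each vertex of the Givental graph sum by $\Theta_{g_v,n_v}$ absorbs one power of $\psi$ at every dilaton leaf, so if $\Omega=R\cdot T\cdot\Omega^{\text{top}}$ then $\Omega^{\Theta}=R\cdot T_0\cdot(\Omega^{\text{top}})^{\Theta}$ with $T_0(z)=T(z)/z$, while $R$ and the TFT are unchanged. Second, by Theorem~\ref{CNo} (from \cite{CNoTop}) the shift $T\mapsto T/z$ is realised on spectral curves by $y\mapsto dy/dx$; applied to $y=\log z$ of Theorem~\ref{TRGWP1} this gives $y=\frac{z}{z^2-1}$, which is exactly the planar resolvent of the Legendre ensemble, with simple poles at $z=\pm1$ encoding the hard edge (Theorem~\ref{TRGW1}). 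Without an argument of this kind your proof does not go through, since nothing else in your outline forces the $[-2,2]$ hard walls (as opposed to, say, a Gaussian-type curve) to appear on the Gromov--Witten side.

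Two smaller points. The factor $\prod_{i<j}(x_i-x_j)^2$ is just the Vandermonde Jacobian of any Hermitian eigenvalue measure and is not produced by the hard edge; what the $\Theta$-classes produce is the simple pole of $y$ at the branch points, equivalently the restriction of the spectrum to $[-2,2]$. Also note that the times $s_k$ do not deform the spectral curve in the paper's argument: the curve is fixed at the $s_k=0$ background and the $s_k$-dependence is extracted from the expansion of the correlators $\omega_{g,n}$ at $x=\infty$ on both sides, which is why Proposition~\ref{th:ancdest} (the ancestor/descendant comparison for $\Theta$-invariants, using the same substitution $\overline{t}^\alpha_k=\sum_m (S_{m-k})^\alpha_\beta t^\beta_m$ as for ordinary Gromov--Witten theory) is needed; your ``term by term'' verification would have to amount to this. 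Finally, your proposed Toda shortcut is circular relative to the paper's logic: the Toda equation for $Z^{\Theta}_{\bp^1}$ (Theorem~\ref{Todath}) is \emph{deduced from} the matrix integral representation, not established independently, so it cannot be used to prove Theorem~\ref{partint}.
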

The 0-point $\Theta$-Gromov-Witten invariants, which can be calculated by setting $\{s_k=0\}$ in \eqref{partleg}, are non-trivial for all $g\neq 1$.  For the usual Gromov-Witten invariants the only non-trivial 0-point invariant is in genus 0, due to the degree one self-map of $\bp^1$.  Whereas, conveniently 
$$\deg\Theta_g=2g-2=\dim[\overline{\modm}_{g}(\bp^1,0)]^{\text{vir}}$$
so they pair to give non-trivial invariants.  

Theorem~\ref{partint} yields new information about the classes $\Theta_{g,n}\in H^*(\overline{\modm}_{g,n})$.  In Section~\ref{0-point} it is shown that \eqref{partleg} produces the following integrals involving Hodge classes
$$\int_{\overline{\modm}_{g,1}}\lambda_{g-1}\Theta_{g,1}=\frac{(-1)^{g-1}(1-2^{-2g})}{g}B_{2g},\quad g\geq 1
$$
where $B_k$ is the $k$th Bernoulli number.  

The integral \eqref{partleg} also gives relations among the usual Gromov-Witten invariants of $\bp^1$.  For example, $\Theta_2=\frac18\delta_{00}+\frac38\delta_{01}$ where $\delta_{00}$, respectively $\delta_{01}$, is the stratum of $\overline{\modm}_{2}$ consisting of irreducible, respectively reducible, curves with two nodes.   Hence
$$-\frac{1}{64}=\int_{[\overline{\modm}_{2}(\bp^1,0)]^{\text{vir}}}\Theta_2=\frac{1}{16}\langle\tau_0(1)^2\tau_0(\omega)^2\rangle^0+\frac38\langle\tau_0(1)\rangle^1\langle\tau_0(1)\tau_0(\omega)^2\rangle^0+\frac38\langle\tau_0(\omega)\rangle^1\langle\tau_0(1)^2\tau_0(\omega)\rangle^0
$$
which uses formula  \eqref{div} for Gromov-Witten invariants integrated over lower strata.  We can explicitly check the right hand side above since $\langle\tau_0(\omega)\rangle^1=-\frac{1}{24}$, $\langle\tau_0(1)^2\tau_0(\omega)\rangle^0=1$ and $\langle\tau_0(1)^2\tau_0(\omega)^2\rangle^0=0=\langle\tau_0(1)\rangle^1$. 


Previously it was not known that the random matrix integral \eqref{partleg} in Theorem~\ref{partint} is related to intersection theory on $\overline{\modm}_{g,n}(\bp^1,d)$.  Theorem~\ref{partint} can be viewed in two ways.  It brings a geometric meaning to the integral \eqref{partleg}, and used in reverse it enables the calculation of the $\Theta$-Gromov-Witten invariants of $\bp^1$.  One immediate application is a proof that the partition function $Z_{\bp^1}^{\Theta}$ of $\Theta$-Gromov-Witten invariants of $\bp^1$ satisfies the Toda equation, which was already known for the partition function $Z_{\bp^1}$ of the usual Gromov-Witten invariants of $\bp^1$---defined in Section~\ref{sec:toda}.   

To study the Toda equation we include insertions of the class $\tau_0(1)$ i.e. consider $Z_{\bp^1}^{\Theta}(\h,\{s_k,t_k=0,k>0\})$ and substitute $s_0=\h(1-t_0)\tilde{s}_0$ into $Z_{\bp^1}^{\Theta}(\h,\{s_k\},t_0)$. 
\begin{theorem}   \label{Todath}
The partition function satisfies the Toda equation
$$\frac{\partial^2}{\partial \tilde{s}_0^2}\log Z_{\bp^1}^{\Theta}(t_0)=\frac{Z_{\bp^1}^{\Theta}(t_0+\h)Z_{\bp^1}^{\Theta}(t_0-\h)}{Z_{\bp^1}^{\Theta}(t_0)^2}
$$
where $Z_{\bp^1}^{\Theta}(t_0):=Z_{\bp^1}^{\Theta}(\h,\{s_k\},t_0)|_{s_0=\h(1-t_0)\tilde{s}_0}$.
\end{theorem}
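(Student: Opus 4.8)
```latex
The plan is to deduce the Toda equation for $Z_{\bp^1}^\Theta$ from its matrix-integral representation in Theorem~\ref{partint}, exploiting the fact that the Toda equation is a structural consequence of the $\tau_0(1)$-insertion acting on an eigenvalue integral over the fixed interval $[-2,2]$. First I would observe that the specialization in Theorem~\ref{Todath} keeps $t_k=0$ for $k>0$ but reinstates $t_0$, and that $t_0$ enters the free energy \eqref{freenergy} only through the combination $1-t_0$, as is already visible in the expansion displayed there. The substitution $s_0=\h(1-t_0)\tilde s_0$ is engineered precisely so that after setting $\h=N^{-1}$, the shift $t_0\mapsto t_0\pm\h$ corresponds to the shift $N\mapsto N\mp 1$ in the matrix integral \eqref{partleg}; this is the conceptual heart of the argument and the step I would set up most carefully.

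Concretely, I would write $Z_N:=Z_{\bp^1}^\Theta(\h=N^{-1},\{s_k\},t_0)$ under the above substitution and recognize the right-hand side of \eqref{partleg} as a Hankel-type determinant $D_N=\det(\mu_{i+j})_{0\le i,j\le N-1}$ in the moments $\mu_k=\int_{-2}^2 x^k\,w(x)\,dx$ of the weight $w(x)=\exp(N\sum_k s_k x^{k+1})$, via the standard Andreief/Heine identity that turns the Vandermonde-squared eigenvalue integral into a determinant of moments. The Toda equation for such tau functions is the classical Hirota bilinear identity $D_{N+1}D_{N-1}=D_N\,\partial_{\tilde s_0}^2 D_N-(\partial_{\tilde s_0}D_N)^2$ for Hankel determinants whose weight depends on a parameter $\tilde s_0$ entering linearly in the exponent through $\sum_i x_i$; equivalently it is the Toda-lattice equation satisfied by the partition functions of the associated orthogonal-polynomial system, where $N$ is the lattice site and $\tilde s_0$ (coupling to $\sum x_i$, i.e.\ to the first moment) is the time variable. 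I would then translate this determinantal identity back through $\h=N^{-1}$ and the $1-t_0$ dependence to obtain exactly the stated equation relating $Z^\Theta_{\bp^1}(t_0)$, $Z^\Theta_{\bp^1}(t_0\pm\h)$, and $\partial_{\tilde s_0}^2\log Z^\Theta_{\bp^1}(t_0)$.

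The two bookkeeping points requiring care are the normalizing constant $c$ and the overall $\frac14\log\h$ (and the homogeneity factor) in \eqref{freenergy}: these contribute a prefactor that must cancel in the bilinear combination, and I would check that the $c/N!$ factors arrange so that the ratio $Z(t_0+\h)Z(t_0-\h)/Z(t_0)^2$ is independent of the choice of $c$. Because Theorem~\ref{partint} identifies $Z^\Theta_{\bp^1}$ only with an \emph{asymptotic} expansion of the integral as $N\to\infty$ rather than the integral itself, I would verify that the Toda relation, being an exact algebraic identity among the $D_N$ holding for each integer $N$, descends to the formal-power-series level, so that it continues to hold for the asymptotic expansions order by order in $\h$.

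The main obstacle I anticipate is precisely this last point: justifying that the shift $t_0\mapsto t_0\pm\h$ of a formal series in $\h$ genuinely corresponds to $N\mapsto N\mp1$ at the level of the asymptotic expansions, i.e.\ that the discrete Toda recursion for the family $\{D_N\}$ survives the passage to the $N\to\infty$ asymptotics in a way compatible with treating $\h$ as a formal variable and $t_0$ as a continuous parameter. Establishing that the asymptotic expansion is compatible with the integer shifts—so that one may legitimately identify $D_{N\pm1}$ with $Z^\Theta_{\bp^1}(t_0\mp\h)$—is where the real content lies; once that dictionary is in place, the Toda equation itself is the standard bilinear identity for Hankel determinants and follows formally.
```
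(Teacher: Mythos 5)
Your proposal follows essentially the same route as the paper: Theorem~\ref{partint} together with the homogeneity supplied by the dilaton equation (Proposition~\ref{diltheta}) converts the $t_0$-dependence into the matrix size $N=(1-t_0)/\h$, and the Toda equation is then the classical bilinear identity for the eigenvalue integral over $[-2,2]$, which the paper derives via the orthogonal-polynomial norms $h_j$ (citing Forrester) rather than your equivalent Hankel-determinant/Hirota formulation. The two points you flag---the role of the normalising constant and the descent of the exact identity for each integer $N$ to the asymptotic/formal-series level---are precisely the points the paper treats lightly, so your plan is in substance the paper's proof.
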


A specialisation of the partition function gives rise to the following wave function which satisfies a differential equation known as the quantum curve.  Define the wave function
\begin{equation}   \label{wave}
\psi(x,\h)=Z_{\bp^1}^{\Theta}\Big(\{s_k=\h\frac{k!}{x^{k+1}},t_k=0\}\Big).
\end{equation}
\begin{theorem}   \label{th:wave}
The wave function \eqref{wave} satisfies the quantum curve equation   
$$\left((4-x^2)\h^2\frac{d^2}{dx^2}-2x\h^2\frac{d}{dx}+1+\h\right)\psi(x,\h)=0.
$$
\end{theorem}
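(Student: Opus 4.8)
The first move is to strip the operator of its disguise. Writing $x=2z$ turns the quantum curve into $\h^2\big[(1-z^2)\tfrac{d^2}{dz^2}-2z\tfrac{d}{dz}+\nu(\nu+1)\big]\psi=0$ with $\nu(\nu+1)=\h^{-2}(1+\h)$, whose roots are $\nu=\h^{-1}$ and $\nu=-\h^{-1}-1$. Thus the quantum curve is exactly \emph{Legendre's equation of degree} $\nu=\h^{-1}$, and it suffices to identify $\psi$ with the Legendre function of degree $\h^{-1}$ lying on the branch selected by the definition \eqref{wave}. The leading symbol gives the spectral curve $(4-x^2)y^2+1=0$, i.e. $y=(x^2-4)^{-1/2}$, which is the Stieltjes transform of the arcsine law on $[-2,2]$ — the equilibrium measure of the hard-edge ensemble in \eqref{partleg}. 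The relevant indicial exponent of $\psi$ at $x=\infty$ is $x^{\h^{-1}}$, and the requisite $\h^{-1}\log$ is supplied by the specially-defined genus $0$ one-point term (obtained from the two-point term via the forgetful map), so that $\log\psi=\h^{-1}\operatorname{arccosh}(x/2)+O(\h^0)$. This is consistent with both the spectral curve and \eqref{freenergy}: since $\operatorname{arccosh}(x/2)=\log x-x^{-2}+O(x^{-4})$, the $-\h^{-1}x^{-2}$ here matches the $-\h^{-2}s_1|_{s_1=\h/x^2}$ of the free energy.

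To produce this solution I use the matrix integral of Theorem~\ref{partint}. Set $\h=N^{-1}$ and substitute $s_k=\h\,k!/x^{k+1}$ into \eqref{partleg}. Because $\sum_{k\ge0}k!\,u^{k+1}$ depends only on $u=x_i/x$, the induced weight $\exp\sum_i\phi(x_i/x)$ is homogeneous, which lets me trade the spectral derivative for the dilation field: $x\,\partial_x\big(\sum_i\phi(x_i/x)\big)=-\sum_i x_i\partial_{x_i}\big(\sum_i\phi(x_i/x)\big)$. The ODE then emerges as a Schwinger–Dyson identity: integrating $\sum_i\partial_{x_i}\!\big[(4-x_i^2)\,\Delta^2 e^{\sum\phi}\,R(x_i)\big]$ over $[-2,2]^N$ for a suitable $R$, the factor $4-x_i^2$ annihilates the boundary terms at the hard edges $x_i=\pm2$ and reappears as the coefficient $(4-x^2)$ of the quantum curve once the Euler substitution is applied. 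The Vandermonde term $\partial_{x_i}\Delta^2=\Delta^2\sum_{j\ne i}2/(x_i-x_j)$ contributes the $N$-dependent lower-order pieces which, at $\h=1/N$, must assemble into the scalar $1+\h$. Equivalently one may recognise \eqref{partleg} through the Heine-determinant / orthogonal-polynomial representation of the Legendre function for the Legendre ensemble on $[-2,2]$, the arcsine law of the first paragraph being its equilibrium measure.

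Finally, the Legendre function of degree $\nu$ solves Legendre's equation for every $\nu$, and $\psi$ coincides with the $N\to\infty$ asymptotic expansion of \eqref{partleg} in the sense made precise in Theorem~\ref{partint}; hence $\mathcal{L}\psi=0$ holds first for each $\h=N^{-1}$. Because the overall normalisation (the $\tfrac14\log\h$ of \eqref{freenergy} together with the prefactor $x^{\h^{-1}}$) cancels from the homogeneous linear operator $\mathcal{L}$, and the remaining coefficients of the $1/x$-expansion are rational in $\nu=\h^{-1}$, the identity $\mathcal{L}\psi=0$ extends from $\nu\in\bn$ to all $\nu$, i.e. as a formal identity in $\h$. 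I expect the main obstacle to be the loop-equation step: the specialised weight is transcendental and $x$-dependent, so one must arrange the Schwinger–Dyson identity so that precisely $(4-x^2)\h^2\partial_x^2-2x\h^2\partial_x$ is generated by the Euler substitution and then—hardest of all—pin down the sub-leading constant as $1+\h$ rather than $1$, equivalently track the indicial exponent $\nu=\h^{-1}$ and its normalising power $x^{\h^{-1}}$ carried by the unstable genus-$0$ terms. Justifying the vanishing of the boundary contributions for the asymptotic (rather than convergent) integral is the remaining delicate point; the leading order is in any case guaranteed by the spectral curve $(4-x^2)y^2+1=0$ of the first paragraph.
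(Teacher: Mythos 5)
Your diagnosis of the operator is right (after $x=2z$ it is Legendre's equation of degree $\nu=\h^{-1}$, since $\h^{-1}(\h^{-1}+1)=\h^{-2}(1+\h)$), and the ``Heine-determinant / orthogonal-polynomial representation'' you mention in passing is in fact the entire mechanism of the paper's proof. But the argument you actually develop --- a Schwinger--Dyson identity for the ensemble with the specialised potential --- does not close, for two concrete reasons. First, the setup is ill-posed: substituting $s_k=\h\,k!/x^{k+1}$ into the potential of \eqref{partleg} produces $\sum_i\sum_k k!\,(t_i/x)^{k+1}$, a divergent series, so there is no deformed measure on $[-2,2]^N$ against which to integrate your total derivative by parts; the specialisation only makes sense term by term in the asymptotic expansion, i.e.\ at the level of the correlators. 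The paper sidesteps this by never deforming the weight: it integrates each variable of the resolvent cumulants from $\infty$ to $x$, which converts ${\rm tr}\,\frac{1}{x_i-M}$ into ${\rm tr}\log(x-M)$ and resums the cumulant generating function \emph{inside the undeformed Legendre ensemble} to give
$$\psi(x,\h)=\frac{1}{N!}\int_{[-2,2]^N}\prod_{i<j}(t_i-t_j)^2\prod_{j=1}^N(x-t_j)\,dt_1\cdots dt_N = d_N\,P_N(\tfrac12 x),$$
the last equality being Aomoto's Selberg-type evaluation (Heine's formula: the average characteristic polynomial is the monic orthogonal polynomial for the weight). The classical Legendre equation, rescaled by $x\mapsto x/2$ and $N=\h^{-1}$, then gives the operator with constant term $N(N+1)/N^2=1+\h$ immediately. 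Second, even granting a formal loop-equation manipulation, you have not derived the equation: you explicitly leave open precisely the two points that constitute the theorem beyond its semiclassical leading order --- the vanishing of the hard-edge boundary contributions and the identification of the subleading constant as $1+\h$ rather than $1$. The leading-order spectral-curve check $(4-x^2)(S_0')^2+1=0$ is already stated in the paper and carries no information about these terms.

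To repair the proposal you would promote your aside to the main argument: establish $\psi=\langle\det(xI-M)\rangle$ via the cumulant resummation above (this step uses Theorem~\ref{partint} to identify the coefficients of $W_{g,n}$ with the stationary invariants, so that the $\int_\infty^x$ integrations reproduce exactly the substitution $s_k=\h\,k!/x^{k+1}$), invoke Heine/Aomoto to get the Legendre polynomial, and then apply its ODE. Your final paragraph --- passing from $\h=1/N$, $N\in\bn$, to a formal identity in $\h$ via rationality of the coefficients of the $x^{-1}$-expansion --- is fine and matches what the paper needs implicitly.
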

The quantum curve is essentially Legendre's differential equation.  The expression $x^{-1/\h}\psi(x,\h)$ is analytic at $x=\infty$ with coefficients of its Taylor series in $x^{-1}$ rational in $\h$.  Its first few terms are given by
$$x^{-1/\h}\psi(x,\h)=1+x^{-2}(-\hbar^{-1}+\frac12+\frac14\h+\frac18\h^2+...)
$$
The coefficient of $x^{-2d}\hbar^{-\chi}$ in $x^{-1/\h}\psi(x,\h)$ collects all of the stationary invariants arising from degree $d$ maps to $\bp^1$ with disconnected domain of Euler characteristic $\chi$.  The same specialisation of the partition  function for usual Gromov-Witten invariants $Z_{\bp^1}\Big(\{s_k=\h\frac{k!}{x^{k+1}},t_k=0\}\Big)$ is also the wave function for a quantum curve \cite{DMNPSQua}, which is a difference equation instead of a differential equation.
 
The wave function has a WKB expansion $\log\psi(x,\h)=\sum_{k\geq 0}\h^{k-1}S_k(x)$ and the differential equation gives $(4-x^2)\left(\frac{dS_0}{dx}\right)^2+1=O(\h)$ hence the differential equation in Theorem~\ref{th:wave} is supported on a curve.  This is known as the $\h\to 0$ semi-classical limit of the quantum curve which produces the spectral curve
\begin{equation}   \label{specurve} 
S^{\Theta}_{\bp^1}=\{(x,y)\in\bc^2\mid (x^2-4)y^2=1\}. 
\end{equation}
For $k>0$, the $S_k(x)$ are expansions at $x=\infty$ of analytic functions defined on the spectral curve.

Theorem~\ref{partint} produces only the stationary invariants $Z_{\bp^1}^{\Theta}(\h=N^{-1},\{s_k\},\{t_k=0\})$.   The full partition function, allowing $t_k\neq 0$, i.e insertions of non-stationary terms can be obtained from Theorem~\ref{TRGW} below which is essentially a replacement for Virasoro constraints.   The usual Gromov-Witten invariants of $\bp^1$ satisfy Virasoro constraints.  This is proven in \cite{OPaVir} where the Virasoro constraints are presented as decay conditions that allow one to calculate non-stationary invariants from stationary invariants.  The Virasoro constraints have an alternative formulation in terms of topological recursion which is a procedure that takes as input a spectral curve and produces a collection of correlators defined on $C^n$.
\begin{theorem}  \label{TRGW}
Topological recursion applied to the spectral curve $S^{\Theta}_{\bp^1}$ can be used to produce the partition function
$$Z_{\bp^1}^{\Theta}(\h=N^{-1},\{s_k\},\{t_k\}).$$
More precisely, the $\Theta$-Gromov-Witten invariants are obtained via contour integrals of the correlators $\omega_{g,n}^{S^{\Theta}_{\bp^1}}$. 
\end{theorem}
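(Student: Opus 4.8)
The plan is to deduce Theorem~\ref{TRGW} from Theorem~\ref{partint} by identifying the spectral curve of the random matrix integral \eqref{partleg}, invoking the matrix-model/topological recursion correspondence to capture the stationary sector, and then extending to the full partition function using the Toda structure of Theorem~\ref{Todath}.

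First I would read off the spectral curve from the matrix integral \eqref{partleg}. This is an $N$-eigenvalue model on the fixed interval $[-2,2]$ with measure $\prod_{i<j}(x_i-x_j)^2$ and potential $V(x)=-\sum_{k\geq 0}s_k x^{k+1}$ weighted by $N$. Setting $\{s_k=0\}$ and carrying out the standard equilibrium-measure analysis as $N\to\infty$, the leading eigenvalue density is the arcsine law $\rho(x)\propto(4-x^2)^{-1/2}$ with hard edges at $x=\pm2$. The associated one-form then satisfies $y=(x^2-4)^{-1/2}$, i.e. $(x^2-4)y^2=1$, which is exactly the spectral curve $S^{\Theta}_{\bp^1}$ of \eqref{specurve}. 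The rational parametrisation $x=z+z^{-1}$, $y=z/(z^2-1)$ exhibits $S^{\Theta}_{\bp^1}$ as a genus-zero curve with branch points at $z=\pm1$ (where $dx=0$) and deck transformation $z\mapsto z^{-1}$, $y\mapsto-y$.

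Next I would apply Eynard--Orantin topological recursion to $S^{\Theta}_{\bp^1}$. By the matrix-model/topological recursion correspondence, the $\h=N^{-1}$ expansion of the connected correlators of \eqref{partleg} is reproduced by the $\omega_{g,n}^{S^{\Theta}_{\bp^1}}$. Concretely, differentiating $\log Z^{\Theta}_{\bp^1}$ in the times $s_{k_1},\dots,s_{k_n}$ and setting $\{s_k=0\}$ yields the connected expectations $\langle\prod_i\mathrm{Tr}\,M^{k_i+1}\rangle_c$, and by Theorem~\ref{partint} these coincide order by order in $\h$ with the stationary invariants $\langle\Theta\cdot\prod_i\tau_{k_i}(\om)\rangle^g_d$. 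On the topological recursion side the same numbers are the contour integrals $\prod_i\oint x_i^{k_i+1}\,\omega_{g,n}^{S^{\Theta}_{\bp^1}}$ taken around $x=\infty$. This settles the stationary sector $\{t_k=0\}$ and yields the precise statement that stationary $\Theta$-Gromov--Witten invariants are contour integrals of the $\omega_{g,n}$.

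Finally I would recover the dependence on the $t_k$ coupling to the identity insertions $\tau_k(1)$, which are invisible to \eqref{partleg}. The combination $1-t_0$ appearing throughout the free energy \eqref{freenergy} signals that $t_0$ enters as a shift of the flat (dilaton) coordinate, and Theorem~\ref{Todath} shows that the $t_0$-dependence is generated from the stationary data by the Toda equation with $s_0$ (equivalently $\tilde s_0$) conjugate; the higher $t_k$ are generated by the higher Toda flows. On the topological recursion side these flows are implemented by expanding the $\omega_{g,n}^{S^{\Theta}_{\bp^1}}$ at the branch points $z=\pm1$ rather than at $x=\infty$, so that the full collection of contour integrals of the $\omega_{g,n}$ around $x=\infty$ and around the branch points produces every $\Theta$-Gromov--Witten invariant; this is the precise sense in which topological recursion replaces the Virasoro constraints of \cite{OPaVir}. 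I expect this last step to be the main obstacle: proving that the branch-point contour integrals reproduce \emph{exactly} the non-stationary invariants. The cleanest route is likely to show that both $Z^{\Theta}_{\bp^1}(\h,\{s_k\},\{t_k\})$ and the generating function assembled from the $\omega_{g,n}^{S^{\Theta}_{\bp^1}}$ satisfy the same Toda hierarchy with identical stationary initial data, and then conclude equality by uniqueness, cross-checking against the low-order terms of \eqref{freenergy}, the quantum curve of Theorem~\ref{th:wave}, and the Hodge integral evaluations of Section~\ref{0-point}.
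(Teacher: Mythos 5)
Your proposal has two genuine gaps. First, it is circular relative to the paper's logical structure: you deduce the stationary part of Theorem~\ref{TRGW} from Theorem~\ref{partint}, but the paper proves Theorem~\ref{partint} \emph{from} Theorem~\ref{TRGW} --- the identification of the Legendre ensemble's correlators $W_{g,n}$ with the generating series $D^{\Theta}_{g,n}$ of stationary invariants is obtained precisely by first showing that topological recursion on $S^{\Theta}_{\bp^1}$ computes the $\Theta$-invariants and then matching both sides against $\omega_{g,n}^{S^{\Theta}_{\bp^1}}$. Your equilibrium-measure analysis correctly identifies the spectral curve of the matrix model, and the loop-equation correspondence of \cite{BGuAsy} does give $W_{g,n}=\omega_{g,n}^{S^{\Theta}_{\bp^1}}$; but nothing in that chain touches the moduli space of stable maps. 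The missing idea is the bridge from the geometry to the spectral curve, which the paper supplies via the Givental--Teleman decomposition: Proposition~\ref{thetadecomp} shows that coupling to $\Theta_{g,n}$ preserves $R(z)$ and the topological field theory and only shifts the translation $T\mapsto T/z$, which at the level of spectral curves is the replacement $y\mapsto dy/dx$ applied to $S_{\bp^1}$ (Theorem~\ref{TRGW1}, using Theorem~\ref{CNo} for degenerate CohFTs and irregular spectral curves). Without this step you have no proof that the matrix integral has anything to do with $\overline{\modm}_{g,n}(\bp^1,d)$.

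Second, your treatment of the non-stationary sector would fail as stated. Theorem~\ref{Todath} only controls $t_0$ (and is itself proven downstream of Theorem~\ref{partint}), and the paper explicitly leaves open whether the higher $t_k$ are governed by Toda-type flows (see the remark citing Getzler after the proof of the Toda equation). The actual mechanism is not a flow: the correlators $\omega_{g,n}^{S^{\Theta}_{\bp^1}}$ are decomposed in the basis of auxiliary differentials $\mathfrak{t}_k,\mathfrak{s}_k$ of \eqref{Tdiff} supported at the branch points $z=\pm1$, whose coefficients are the \emph{ancestor} invariants with insertions of both $1$ and $\omega$; Proposition~\ref{th:ancdest} (the Kontsevich--Manin ancestor--descendant change of variables, extended to the $\Theta$-coupled theory) then converts these to descendants, and the unstable $(0,1)$ and $(0,2)$ terms are handled separately via \eqref{def01} and \eqref{excep}. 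Your instinct to look at the branch points is in the right direction, but the uniqueness-via-Toda-hierarchy argument you propose to close the gap is not available.
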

Theorem~\ref{TRGW} produces stationary $\Theta$-Gromov-Witten invariants via 
$$\left\langle \Theta\cdot\prod_{i=1}^n \tau_{k_i}(\omega) \right\rangle^g_d=\Res_{z_1=0}...\Res_{z_n=0}\prod_{i=1}^n\frac{x_i^{k_i+1}}{(k_i+1)!}\omega_{g,n}^{S^{\Theta}_{\bp^1}}.$$
Section~\ref{proofs} describes how non-stationary $\Theta$-Gromov-Witten invariants are obtained from the correlators $\omega_{g,n}^{S^{\Theta}_{\bp^1}}$. 

Theorem~\ref{TRGW} is proven by pushing forward cohomology classes on $\overline{\modm}_{g,n}(\bp^1,d)$ to cohomology classes on $\overline{\modm}_{g,n}$ and calculating so-called {\em ancestor} invariants.  This is the content of Section~\ref{sec:cohft}.  

The paper is organised as follows. In Section~\ref{sec:theta} we define the cohomology classes $\Theta_{g,n}\in H^*(\overline{\modm}_{g,n})$ and describe their relation to the Br\'ezin-Gross-Witten tau function of KdV.  In Section~\ref{sec:cohft} we define cohomological field theories (CohFTs) which provide the crucial link between the left and right hand sides of \eqref{partleg} in Theorem~\ref{partint}---Gromov-Witten invariants can be calculated using an associated CohFT; and a matrix integral satisfies loop equations defined on a spectral curve which is associated to a CohFT via topological recursion.  In Section~\ref{GW} we prove properties of $\Theta$-Gromov-Witten invariants for a general target variety $X$.  Section~\ref{Giv} describes a graphical formulation of Givental's action on CohFTs and the resulting construction of semisimple CohFTs.  Section~\ref{sec:TR} describes an equivalent formulation of this graphical construction via topological recursion.  The main theorems are proven in Section~\ref{proofs}.   A number of explicit calculations can be found in Sections~\ref{1-point} and \ref{0-point}.

\vspace{.5cm}
\noindent{\em Acknowledgements.}  I would like to thank Peter Forrester for useful conversations and the referee for many useful comments. This research was supported by the Australian Research Council grants DP170102028 and DP180103891.

\section{Cohomology classes over $\overline{\modm}_{g,n}$ and KdV tau functions}   \label{sec:theta}

Let $\overline{\modm}_{g,n}$ be the moduli space 
of genus $g$ stable curves---curves with only nodal singularities and finite automorphism group---with $n$ labeled points disjoint from nodes.   Define $\psi_i=c_1(L_i)\in H^{2}(\overline{\mathcal{M}}_{g,n},\mathbb{Q})$, the first Chern class of the line bundle $L_i\to\overline{\mathcal{M}}_{g,n}$ with fibre above $[(C,p_1,\ldots,p_n)]$ given by $T_{p_i}^*C$.  

The KdV equation
\begin{equation}\label{kdv}
U_{t_1}=UU_{t_0}+\frac{\h^2}{12}U_{t_0t_0t_0},\quad U(t_0,0,0,...)=f(t_0)
\end{equation}
is the first equation in a hierarchy of equations $U_{t_k}=P_k(U,U_{t_0},U_{t_0t_0},...)$ for $k>1$ which determine $U$ uniquely from $U(t_0,0,0,...)$.  This is known as the KdV hierarchy---see \cite{MJDSol} for the full definition.  A tau function $Z(\h,t_0,t_1,...)$ of the KdV hierarchy 
gives rise to a solution $U=\h^2\frac{\partial^2}{\partial t_0^2}\log Z$ of the KdV hierarchy.

Witten conjectured, and Kontsevich proved, that a generating function for the intersection numbers $\int_{\overline{\cal M}_{g,n}}\prod_{i=1}^n\psi_i^{m_i}$ is a tau function of the KdV hierarchy.
\begin{theorem}[Witten-Kontsevich 1992 \cite{KonInt,WitTwo}] \label{th:KW}
\begin{equation}  \label{KW}
Z^{\text{KW}}(\h,t_0,t_1,...)=\exp\sum_{g,n}\h^{2g-2}\frac{1}{n!}\sum_{\vec{k}\in\bn^n}\int_{\overline{\modm}_{g,n}}\prod_{i=1}^n\psi_i^{k_i}t_{k_i}
\end{equation}
is a tau function of the KdV hierarchy.
\end{theorem}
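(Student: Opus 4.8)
\smallskip
\noindent\emph{How one would prove this.} The plan is to follow Kontsevich's route: replace the intersection numbers by a combinatorial sum, identify that sum with a matrix integral of Airy type, and finally show that this matrix integral is a tau function of KdV. Throughout it is convenient to perform the Miwa change of variables sending the KdV times $t_k$ to power sums $\mathrm{tr}\,\Lambda^{-(2k+1)}$ of a positive diagonal matrix $\Lambda=\mathrm{diag}(\lambda_1,\dots,\lambda_M)$, under which $\log Z^{\mathrm{KW}}$ becomes a function of $\Lambda$ carrying only odd power sums, the telltale sign of a KdV (rather than full KP) reduction.

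First I would set up the cellular model of $\overline{\modm}_{g,n}$. Using Jenkins--Strebel quadratic differentials, the decorated moduli space $\modm_{g,n}\times\br_+^n$ decomposes into cells indexed by trivalent ribbon graphs $\Gamma$ of genus $g$ with $n$ labeled faces, the perimeters of the faces playing the role of the decorations. The key geometric input is Kontsevich's ``main identity'', expressing a generating function of $\psi$-class monomials as a sum over such graphs; schematically
$$\sum_{\vec{k}\in\bn^n}\int_{\overline{\modm}_{g,n}}\prod_{i=1}^n\psi_i^{k_i}\prod_{i=1}^n\frac{(2k_i-1)!!}{\lambda_i^{2k_i+1}}=\sum_{\Gamma}\frac{2^{2g-2+n}}{|\mathrm{Aut}\,\Gamma|}\prod_{e=(i,j)\in E(\Gamma)}\frac{1}{\lambda_i+\lambda_j},$$
the factor at each edge arising from integrating out the edge-length coordinates on a cell. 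I would prove this by pulling the $\psi$-classes back to the combinatorial model, where they are represented by explicit piecewise-linear $2$-forms supported on the cells, and integrating.

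Next I would recognize the right-hand side as a Feynman-diagram expansion. The edge weight $\frac{1}{\lambda_i+\lambda_j}$ is exactly the propagator of the Gaussian Hermitian ensemble with weight $\exp(-\tfrac12\mathrm{tr}(\Lambda M^2))$, and the trivalent vertices reproduce the cubic interaction $\tfrac{i}{6}\mathrm{tr}\,M^3$; summing over all ribbon graphs $\Gamma$ therefore assembles, order by order, into the asymptotic expansion of the Kontsevich matrix Airy integral
$$\int dM\,\exp\!\big(-\tfrac12\mathrm{tr}(\Lambda M^2)+\tfrac{i}{6}\mathrm{tr}\,M^3\big)$$
over Hermitian matrices $M$, divided by its Gaussian part. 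Matching the two expansions identifies $Z^{\mathrm{KW}}$, in the Miwa variables, with this matrix integral.

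The hardest part is to show that this matrix integral is a KdV tau function, and I expect this to be the main obstacle. Here I would diagonalize $M$, reduce to an integral over eigenvalues with a Vandermonde factor, and express the result through a determinant of one-dimensional Airy-type integrals; the resulting determinantal/orthogonal-polynomial structure exhibits the dependence on $\mathrm{tr}\,\Lambda^{-(2k+1)}=t_k$ as that of a KP tau function whose restriction to odd times is precisely the KdV hierarchy, with the string equation fixing the normalization. An alternative packaging of this last step, closer in spirit to the present paper, is to prove instead the Virasoro constraints $L_mZ^{\mathrm{KW}}=0$ for $m\geq-1$: the operators $L_{-1}$ (string) and $L_0$ (dilaton) follow directly from the forgetful map $\pi:\overline{\modm}_{g,n+1}\to\overline{\modm}_{g,n}$ via $\pi^*\psi_i=\psi_i-D_{i,n+1}$ and $\pi_*\psi_{n+1}=2g-2+n$, the higher $L_m$ encode the two ways a curve degenerates (matching the two gluing maps onto $\overline{\modm}_{g,n}$), and a standard lemma from integrable systems identifies the full Virasoro tower together with the string equation with the KdV hierarchy. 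Either way the essential difficulty is the same, namely converting the boundary and combinatorial geometry of $\overline{\modm}_{g,n}$ into the integrable structure of KdV.
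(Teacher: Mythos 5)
This statement is quoted in the paper as the Witten--Kontsevich theorem with citations to \cite{KonInt,WitTwo}; the paper supplies no proof of its own, so there is nothing internal to compare against. Your sketch is an accurate outline of Kontsevich's original argument --- Strebel differentials and the ribbon-graph cell decomposition, the main identity (your normalisation $2^{2g-2+n}=2^{|E|-|V|}$ for trivalent graphs is consistent), the matrix Airy function, and the determinantal/Virasoro route to the KdV tau-function property --- and you correctly flag that the last step is where the real difficulty sits, so as a blind reconstruction of the cited proof it is on target.
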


The Kontsevich-Witten tau function $Z^{\text{KW}}(\h,t_0,t_1,...)$ is defined by the initial condition $U^{\text{KW}}(\h,t_0,0,...)=t_0$ for $U^{\text{KW}}=\h^2\frac{\partial^2}{\partial t_0^2}\log Z^{\text{KW}}$.  The low genus terms of $\log Z^{\text{KW}}$ are 
$$\log Z^{\text{KW}}(\h,t_0,t_1,...)=\h^{-2}(\frac{t_0^3}{3!}+\frac{t_0^3t_1}{3!}+\frac{t_0^4t_2}{4!}+...)+\frac{t_1}{24}+...
$$

In \cite{NorNew} we constructed cohomology classes $\Theta_{g,n}\in H^*(\overline{\modm}_{g,n})$ for $g\geq 0$, $n\geq 0$ and $2g-2+n>0$ such that the generating function $Z^{\text{BGW}}(\h,t_0,t_1,...)$ of intersection numbers of $\Theta_{g,n}$ with $\psi$ classes is also a tau function of the KdV hierarchy.   The function $Z^{\text{BGW}}(\h,t_0,t_1,...)$ is known as the Br\'ezin-Gross-Witten tau function and arose previously in the study of $U(n)$ matrix models \cite{BGrExt,GWiPos}.  It is defined by the initial condition $U^{\text{BGW}}(\h,t_0,0,0,...)=\frac18\h^2/(1-t_0)^2$ for $U^{\text{BGW}}=\h^2\frac{\partial^2}{\partial t_0^2}\log Z^{\text{BGW}}$.  The low genus $g$ terms of $\log Z^{\text{BGW}}$ are
\begin{align*}  
\log Z^{\text{BGW}}&=-\frac{1}{8}\log(1-t_0)+\h^2\frac{3}{128}\frac{t_1}{(1-t_0)^3}+\h^4\frac{15}{1024}\frac{t_2}{(1-t_0)^5}+\h^4\frac{63}{1024}\frac{t_1^2}{(1-t_0)^6}+O(\h^6)\\
&=(\frac{1}{8}t_0+\frac{1}{16}t_0^2+\frac{1}{24}t_0^3+...)+\h^2(\frac{3}{128}t_1+\frac{9}{128}t_0t_1+...)+\h^4(\frac{15}{1024}t_2+\frac{63}{1024}t_1^2+...)+...\nonumber
\end{align*}
\begin{theorem}[\cite{NorNew}]  \label{th:theta}
There exist cohomology classes $\Theta_{g,n}\in H^{2(2g-2+n)}(\overline{\modm}_{g,n})$ so that
$$Z^{BGW}(\h,t_0,t_1,...)=\exp\sum\frac{\h^{2g-2}}{n!}\int_{\overline{\modm}_{g,n}}\hspace{-3mm}\Theta_{g,n}\cdot\prod_{j=1}^n\psi_j^{k_j}\prod t_{k_j}
$$
is the Br\'ezin-Gross-Witten tau function of the KdV hierarchy.
\end{theorem}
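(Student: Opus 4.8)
The plan is to construct the classes $\Theta_{g,n}$ explicitly from spin structures, verify that they assemble into a one-dimensional cohomological field theory concentrated in top degree, and then identify the resulting generating function with $Z^{\mathrm{BGW}}$ either through the Givental--Teleman reconstruction of semisimple CohFTs or, equivalently, through topological recursion on the Bessel spectral curve. The key point that makes this worthwhile (rather than merely observing that $Z^{\mathrm{BGW}}$ is some known tau function) is that $\Theta_{g,n}$ must be a genuine, geometrically defined class, since the companion results pull it back along $p\colon\overline{\modm}_{g,n}(\bp^1,d)\to\overline{\modm}_{g,n}$.

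First I would work on the moduli space of spin curves $(C,x_1,\dots,x_n,\theta,\phi)$, where $\theta$ is a square root of the log-canonical bundle $\omega_C^{\mathrm{log}}$ in the sense of the Chiodo $2$-spin construction, with the standard twisting at the markings arranged so that the derived pushforward $R\pi_*\theta$ is represented by a vector bundle $E_{g,n}$ of rank $2g-2+n$ (the required twist being determined by Riemann--Roch). Setting $\Theta_{g,n}:=c\cdot \mathrm{pr}_*\,c_{\mathrm{top}}(E_{g,n})$ for the projection $\mathrm{pr}$ to $\overline{\modm}_{g,n}$ and an explicit normalization constant $c$ (to be pinned down) produces a class in $H^{2(2g-2+n)}(\overline{\modm}_{g,n})$, which is the asserted top degree. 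The crucial structural input is the behaviour of $E_{g,n}$ under the tautological maps: pulling back along the forgetful morphism gives the relation $\Theta_{g,n+1}=\psi_{n+1}\cdot\pi^*\Theta_{g,n}$, and restricting to the boundary divisors produces the splitting of $\theta$ under normalisation, hence the gluing axioms. These identities exhibit $\{\Theta_{g,n}\}$ as a degree-shifted one-dimensional CohFT.

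Next I would convert these functorial relations into a closed recursion for the numbers $\langle\tau_{k_1}\cdots\tau_{k_n}\rangle^\Theta_g:=\int_{\overline{\modm}_{g,n}}\Theta_{g,n}\prod\psi_i^{k_i}$: the forgetful relation yields string- and dilaton-type equations, while the boundary splitting yields a Dijkgraaf--Verlinde--Verlinde (DVV) type quadratic recursion, and together these are the Virasoro constraints characterising $Z^{\mathrm{BGW}}$. To identify the solution I would use that $\{\Theta_{g,n}\}$ is semisimple and apply Givental--Teleman: the descendent potential is obtained by acting with a Givental group element $\hat R\hat T$ on the trivial one-point theory, whose potential is the Kontsevich--Witten tau function of Theorem~\ref{th:KW}, and on a one-dimensional state space this action preserves KdV integrability, so $Z$ is again a KdV tau function. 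Equivalently, the CohFT's spectral curve is the Bessel curve $2xy^2=1$, whose topological recursion correlators reproduce $Z^{\mathrm{BGW}}$ via the Eynard and Dunin-Barkowski--Orantin--Shadrin--Spitz correspondence. Finally I would fix the constant $c$ and confirm the correct tau function by restricting to $t_{\ge1}=0$: only genus one survives on dimensional grounds, so $\log Z|_{t_{\ge1}=0}=\sum_n \tfrac1{n!}\int_{\overline{\modm}_{1,n}}\Theta_{1,n}\,t_0^n$, which by the forgetful relation and $\pi_*\psi_n=n-1$ reduces to the single base integral $\int_{\overline{\modm}_{1,1}}\Theta_{1,1}$; choosing $c$ so that this equals $\tfrac18$ yields $\log Z|_{t_{\ge1}=0}=-\tfrac18\log(1-t_0)$, i.e. $\hbar^2\partial_{t_0}^2\log Z|_{t_{\ge1}=0}=\tfrac18\hbar^2(1-t_0)^{-2}$, the defining BGW initial condition.

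The main obstacle is the global identification rather than the construction. The boundary geometry only manifestly delivers the string, dilaton and DVV relations; upgrading these to the statement that $Z$ is a tau function of the entire KdV hierarchy (all flows, not a finite check) is the substantive step, and it rests on the nontrivial facts that the relevant Givental action on a one-dimensional CohFT lands in the subgroup preserving KdV integrability, and that the Bessel-curve topological recursion is genuinely the BGW model. A secondary but delicate point is the bookkeeping of the spin twist and the normalizing powers of $2$ and signs, which must be arranged so that $E_{g,n}$ has exactly rank $2g-2+n$ and so that the base value comes out to the precise constant $\tfrac18$.
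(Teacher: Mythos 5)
First, note that the paper you are reading does not actually prove Theorem~\ref{th:theta}: it imports the statement from \cite{NorNew}, and Section~\ref{sec:theta} only records the construction (push-forward of a top Chern class from the moduli space of stable spin curves) and the three properties \eqref{cohft}--\eqref{base} that pin down the intersection numbers. Your construction sketch is consistent with that description --- the rank-$(2g-2+n)$ obstruction bundle, the forgetful relation $\Theta_{g,n+1}=\psi_{n+1}\pi^*\Theta_{g,n}$, the gluing axioms, and the normalisation $\int_{\overline{\modm}_{1,1}}\Theta_{1,1}=\tfrac18$ (which matches $\Theta_{1,1}=3\psi_1$) are all exactly the ingredients the paper attributes to \cite{NorNew}, and your genus-one computation $\log Z|_{t_{\geq 1}=0}=-\tfrac18\log(1-t_0)$ correctly recovers the BGW initial condition.

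The genuine gap is in the identification step, which is precisely the hard content of the theorem. You propose to ``use that $\{\Theta_{g,n}\}$ is semisimple and apply Givental--Teleman,'' but $\Theta$ is a \emph{degenerate} CohFT in the paper's terminology: $\Theta_{0,n}=0$, so $\Omega_{0,3}$ vanishes, there is no unit satisfying \eqref{unmet}, the quantum product is zero rather than semisimple, and Teleman's classification simply does not apply. (For the one-dimensional theory $\Theta$ itself the decomposition of Proposition~\ref{thetadecomp} is the tautology $R=I$, $T_0=0$, so it yields no information; and the reconstruction result that does cover this situation, Theorem~\ref{CNo} from \cite{CNoTop}, takes the BGW identification as an input, so invoking it here would be circular.) Relatedly, the claim that the restriction properties \eqref{cohft}--\eqref{forget} ``yield a DVV type quadratic recursion'' and hence Virasoro constraints is too quick: knowing how a class restricts to boundary divisors does not by itself produce relations among its integrals against $\psi$-monomials --- one needs actual cohomological relations (or an independent proof that the Bessel-curve/topological-recursion correlators compute these specific integrals) to convert the degree and boundary data into a recursion. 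You correctly flag this as ``the substantive step,'' but the tools you name to close it either fail (semisimplicity) or presuppose the conclusion (the Bessel-curve correspondence for $\Theta$), so the proposal as written establishes the construction and the initial condition but not that the generating function is a KdV tau function.
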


The classes $\Theta_{g,n}\in H^{2(2g-2+n)}(\overline{\modm}_{g,n})$ are constructed in \cite{NorNew} via the push-forward of classes defined on the moduli space of stable spin curves.  
They naturally restrict to the boundary divisors 
$\overline{\modm}_{g-1,n+2}\to\overline{\modm}_{g,n}$  and $\overline{\modm}_{h,n_1+1}\times\overline{\modm}_{g-h,n_2+1}\to\overline{\modm}_{g,n}$ and are compatible with the forgetful map $\overline{\modm}_{g,n+1}\stackrel{\pi}{\longrightarrow}\overline{\modm}_{g,n}$.

\begin{enumerate}[(i)]
\item  $\Theta_{g,n}|_{\overline{\modm}_{g-1,n+2}}=\Theta_{g-1,n+2}$,\qquad $\Theta_{g,n}|_{\overline{\modm}_{g_1,n_1+1}\times\overline{\modm}_{g_2,n_2+1}}=\Theta_{g_1,n_1}\cdot\Theta_{g_2,n_2}$  \label{cohft}  
\item $\Theta_{g,n+1}=\psi_{n+1}\cdot\pi^*\Theta_{g,n}$,  \label{forget}
\item  $\Theta_{1,1}=3\psi_1$  \label{base}
\end{enumerate}
Properties \eqref{cohft}-\eqref{base} are enough to uniquely determine all intersection numbers $\int_{\overline{\modm}_{g,n}}\Theta_{g,n}\cdot\prod_{j=1}^n\psi_j^{k_j}$ hence the partition function in Theorem~\ref{th:theta}.   Property~\eqref{cohft} shows that $\Theta_{g,n}$ is a {\em degenerate cohomological field theory} discussed in Section~\ref{sec:cohft}.  Property~\eqref{forget} implies that 
$$\pi_*(\Theta_{g,n+1}\psi_{n+1}^m)=\Theta_{g,n}\kappa_m.$$ 
This push-forward property has a graphical interpretation which is crucial in the sequel.  It allows one to calculate Gromov-Witten invariants coupled to pull-backs of the classes $\Theta_{g,n}$ using the same graphical data---described in Section~\ref{Giv}---to calculate usual Gromov-Witten invariants.

\section{Cohomological field theories}  \label{sec:cohft}

Cohomology classes on the moduli space of stable maps $\overline{\modm}_{g,n}(X,{\bf d})$ intersected with the virtual fundamental class $[\overline{\modm}_{g,n}(X,{\bf d})]^{\text{vir}}$ naturally push forward to cohomology classes on the moduli space of stable curves $\overline{\modm}_{g,n}$.  The Gromov-Witten invariants of a target variety $X$ can be calculated via these push-forward classes on $\overline{\modm}_{g,n}$ using the beautiful structure of a cohomological field theory (CohFT) which is defined below.  Importantly for this paper, the pull-back of the class $\Theta_{g,n}$ to the moduli space of stable maps is realised simply by cup product of $\Theta_{g,n}$ with the push-forward of classes from $\overline{\modm}_{g,n}(X,{\bf d})$.    In this section we describe the Gromov-Witten invariants of a target variety $X$ coupled to the classes $\Theta_{g,n}$ from this perspective.  We begin more generally, with a description of the cup product of $\Theta_{g,n}$ with any CohFT.

A {\em cohomological field theory} is a pair $(V,\eta)$ consisting of a finite-dimensional complex vector space $V$ equipped with a non-degenerate symmetric bilinear form $\eta$, referred to as a metric, and a sequence of $S_n$-equivariant maps: 
\[ \Omega_{g,n}:V^{\otimes n}\to H^*(\overline{\modm}_{g,n})\]
that satisfy compatibility conditions from inclusion of strata:
$$\phi_{\text{irr}}:\overline{\modm}_{g-1,n+2}\to\overline{\modm}_{g,n},\quad \phi_{h,I}:\overline{\modm}_{h,|I|+1}\times\overline{\modm}_{g-h,|J|+1}\to\overline{\modm}_{g,n},\quad I\sqcup J=\{1,...,n\}$$
given by
\begin{equation}\label{glue}
\begin{split}
\phi_{\text{irr}}^*\Omega_{g,n}(v_1\otimes...\otimes v_n)&=\Omega_{g-1,n+2}(v_1\otimes...\otimes v_n\otimes\Delta) \\
\phi_{h,I}^*\Omega_{g,n}(v_1\otimes...\otimes v_n)&=\Omega_{h,|I|+1}\otimes \Omega_{g-h,|J|+1}\big(\bigotimes_{i\in I}v_i\otimes\Delta\otimes\bigotimes_{j\in J}v_j\big)
\end{split}
\end{equation}
where $\Delta\in V\otimes V$ is dual to the metric $\eta\in V^*\otimes V^*$, and there exists a vector $\un\in V$ satisfying 
\begin{equation} \label{unmet}
\Omega_{0,3}(v_1\otimes v_2\otimes \un)=\eta(v_1,v_2).
\end{equation}
which is essentially a non-degeneracy condition.
When $n=0$, $\Omega_g:=\Omega_{g,0}\in H^*(\overline{\modm}_{g})$.   A CohFT defines a product $\cdot$ on $V$ using the non-degeneracy of $\eta$ by
\begin{equation}  \label{prod} 
\eta(a\cdot b,c)=\Omega_{0,3}(a,b,c).
\end{equation}
and $\un$ is a unit for the product.    We will also consider sequences of $S_n$-equivariant maps $\Omega_{g,n}$ that satisfy \eqref{glue}, but do not satisfy \eqref{unmet} which we call {\em degenerate} CohFTs.

The partition function of a (degenerate) CohFT $\Omega=\{\Omega_{g,n}\}$ with respect to a basis $\{e_1,...,e_D\}$ of $V$, is defined by:
\begin{equation}   \label{partfun}
Z_{\Omega}(\h,\{\bar{t}^{\alpha}_k\})=\exp\sum_{g,n,\vec{\alpha},\vec{k}}\frac{\h^{2g-2}}{n!}\int_{\overline{\modm}_{g,n}}\Omega_{g,n}(e_{\alpha_1}\otimes...\otimes e_{\alpha_n})\cdot\prod_{j=1}^n\psi_j^{k_j}\prod\bar{t}^{\alpha_j}_{k_j}
\end{equation}
 $\alpha_i\in\{1,...,D\}$ and $k_j\in\bn$.  

\begin{definition} \label{cohfttop}
Given a CohFT $\Omega$, define $\Omega^{\text{top}}$ to be its degree 0 part:
$$\Omega^{\text{top}}_{g,n}(v_1\otimes...\otimes v_n)=\deg_0\Omega_{g,n}(v_1\otimes...\otimes v_n)\in H^0(\overline{\modm}_{g,n}). 
$$
\end{definition}
Note that $\Omega^{\text{top}}$ is a CohFT since it is $S_n$-equivariant and conditions \eqref{glue} and \eqref{unmet} restrict to $\Omega^{\text{top}}_{g,n}$.  The CohFT $\Omega^{\text{top}}$ is also known as a (two-dimensional) {\em topological} field theory, since together with the metric $\eta$ it satisfies axioms of Atiyah \cite{AtiTop} on a functor from cobordisms in $1+1$ dimensions to vector spaces.
 
When $\dim V=1$, we identify $\Omega_{g,n}$ with the image $\Omega_{g,n}(\un^{\otimes n})$ for a choice of basis vector $\un\in V$ and we write $\Omega_{g,n}\in H^*(\overline{\modm}_{g,n})$.  The trivial CohFT is $\Omega_{g,n}=1\in H^0(\overline{\modm}_{g,n})$, which is also a topological field theory.  The classes $\Omega_{g,n}=\Theta_{g,n}\in H^0(\overline{\modm}_{g,n})$ form a degenerate CohFT by the restriction property \eqref{cohft} in Section~\ref{sec:theta}.  By Theorem~\ref{th:KW}, the partition function of the trivial CohFT $\Omega_{g,n}=1\in H^*(\overline{\modm}_{g,n})$ is $Z_{\Omega}(\h,\{t_k\})=Z^{\text{KW}}(\h,\{t_k\})$, and by Theorem~\ref{th:theta}, the partition function of the degenerate CohFT $\Omega_{g,n}=\Theta_{g,n}\in H^*(\overline{\modm}_{g,n})$ is $Z_{\Omega}(\h,\{t_k\})=Z^{\text{BGW}}(\h,\{t_k\})$.

A CohFT may satisfy the further condition that insertion of the unit vector is compatible with the forgetful map $\pi:\overline{\modm}_{g,n+1}\to\overline{\modm}_{g,n}$ in the following way.  A CohFT that satisfies 
\begin{equation}  \label{flatunit} 
\Omega_{g,n+1}(v_1\otimes...\otimes v_n\otimes \un)=\pi^*\Omega_{g,n}(v_1\otimes...\otimes v_n),\quad 2g-2+n>0
\end{equation}
is a CohFT with {\em flat unit}.  The trivial CohFT $\Omega_{g,n}=1\in H^0(\overline{\modm}_{g,n})$ is a CohFT with flat unit, and if $\Omega$ is a CohFT with flat unit, then $\Omega^{\text{top}}$ is a CohFT with flat unit.  In place of \eqref{flatunit}, a degenerate CohFT may satisfy, for some $\un\in V$,
\begin{equation}  \label{dilunit}  \tag{\theequation${}^\prime$}
\Omega_{g,n+1}(v_1\otimes...\otimes v_n\otimes \un)=\psi_{n+1}\cdot\pi^*\Omega_{g,n}(v_1\otimes...\otimes v_n),\quad 2g-2+n>0.
\end{equation}
The degenerate CohFT $\{\Theta_{g,n}\}$ satisfies \eqref{dilunit}.  

\begin{definition}  \label{cohftheta}
For any CohFT $\Omega$ defined on $(V,\eta)$, define the degenerate CohFT $\Omega^\Theta=\{\Omega^\Theta_{g,n}\}$, also defined on $(V,\eta)$, to be the sequence of $S_n$-equivariant maps $\Omega^\Theta_{g,n}:V^{\otimes n}\to H^*(\overline{\modm}_{g,n})$ given by 
$$\Omega^\Theta_{g,n}(v_1\otimes...\otimes v_n)=\Theta_{g,n}\cdot\Omega_{g,n}(v_1\otimes...\otimes v_n).$$   
\end{definition}
It is immediate that if $\Omega$ is a CohFT satisfying \eqref{flatunit} then $\Omega^\Theta$ is a degenerate CohFT satisfying \eqref{dilunit}.  The operation in Definition~\ref{cohftheta} is an analogue of the tensor product of two CohFTs $\Omega_i$, respectively defined on $(V_i,\eta_i)$ for $i=1,2$, which produces a CohFT defined on $(V_1\otimes V_2,\eta_1\otimes \eta_2)$.  It is given by $\Omega^\Theta=\Omega\otimes\Theta$.  The tensor product generalises the special case of Gromov-Witten invariants of target products and the K\"unneth formula $H^*(X_1\times X_2)\cong H^*(X_1)\otimes H^*(X_2)$.  

A CohFT is {\em semisimple}  if the product defined in \eqref{prod} is semisimple, i.e. if the diagonal decompositition $V\cong\bc\oplus\bc\oplus...\oplus\bc$ respects the product, or equivalently there is a canonical basis $\{ u_1,...,u_D\}\subset V$ such that $u_i\cdot u_j=\delta_{ij}u_i$.  Note that the metric is necessarily diagonal with respect to the canonical basis, $\eta(u_i,u_j)=\delta_{ij}\eta_i$ for some $\eta_i\in\bc \setminus \{0\}$, $i=1,...,D$.    A semisimple topological field theory $\Omega^{\text{top}}$ defined on a vector space $V$ is characterised by an element $\Omega^{\text{top}}_{0,1}\in V^*$ which represents evaluation of the metric $\eta(u_i,u_i)=\eta(u_i\cdot u_i,\un)=\eta(u_i,\un)=\Omega^{\text{top}}_{0,1}(u_i)$.  

\subsection{Gromov-Witten invariants}  \label{GW}
The Gromov-Witten invariants of a target variety $X$ are defined via intersection theory on the moduli space $\overline{\modm}_{g,n}(X,{\bf d})$ of degree ${\bf d}\in H_2(X)$ stable maps $f:(C,x_1,\dots,x_n)\rightarrow X$ of genus $g$ curves with $n$ labeled points into $X$.  Over the moduli space of stable maps are line bundles $L_i\to\overline{\modm}_{g,n}(X,{\bf d})$, for $i=1,...,n$, with fibre the cotangent line $T^*_{x_i}C$ of the $i$th labeled point on the domain curve, and $\psi_i=c_1(L_i)$.  There are natural maps
$$\begin{array}{ccc}\overline{\modm}_{g,n}(X,{\bf d})&\stackrel{ev_i}{\to}&X\\
p\downarrow\quad&&\\ \quad\overline{\modm}_{g,n}\end{array}
$$
given by evaluation $ev_i$ of $f$ at $p_i$, and the forgetful map $p$ which maps a stable map to its domain curve followed by contraction of unstable components.   For $g_1+g_2=g$, ${\bf d}_1+{\bf d}_2={\bf d}$ and $I\sqcup J=\{1,...,n\}$, define $D(g_1,I,{\bf d}_1\mid g_2,J,{\bf d}_2)\subset\overline{\modm}_{g,n}(X,{\bf d})$ to be the image of the map
$$\overline{\modm}_{g_1,|I|+1}(X,{\bf d}_1)\times_X\overline{\modm}_{g_2,|J|+1}(X,{\bf d}_2)\to\overline{\modm}_{g,n}(X,{\bf d})
$$
where the fibre product is over the evaluation map from the final point in each domain, say $x_{|I|+1}$ and $y_{|J|+1}$.  Points of $D(g_1,I,{\bf d}_1\mid g_2,J,{\bf d}_2)$ consist of stable maps with reducible domain.  A special case is when points $p_i$ and $p_{n+1}$ try to come together, denoted by $D_i=D(g,\{1,..,\hat{i},..,n\},{\bf d}\mid 0,\{i,n+1\},0)$.

The Gromov-Witten invariants are defined via intersection with a virtual fundamental class \cite{BFaInt}:
\begin{equation} \label{des}
\left\langle \prod_{i=1}^n\tau_{b_i}(\alpha_i) \right\rangle ^{g}_{{\bf d}}:=\int_{[{\modm}_{g,n}(X,{\bf d})]^{vir}} \prod_{i=1}^n\psi_i^{b_i}ev_i^\ast(\alpha_i)
\end{equation}
for $\alpha_i\in H^*(X)$.  If one includes divisor classes into the integral then the Gromov-Witten invariants reduce to Gromov-Witten invariants over lower strata.  See \cite{GatGro} for a proof of the following:
\begin{equation} \label{div}
\int_{[{\modm}_{g,n}(X,{\bf d})]^{vir}} \prod_{i=1}^n\psi_i^{b_i}ev_i^\ast(\alpha_i)\cdot [D(g_1,I,{\bf d}_1\mid g_2,J,{\bf d}_2)]^{vir}\hspace{-1mm}=\sum_{k=1}^N\left\langle\hspace{-.5mm} \tau_0(e_k) \prod_{i\in I}\tau_{b_i}(\alpha_i) \right\rangle ^{g_1}_{{\bf d}_1}\hspace{-2mm}\left\langle\hspace{-.5mm} \tau_0(e^k)\prod_{j\in J}\tau_{b_j}(\alpha_j) \right\rangle ^{g_2}_{{\bf d}_2}
\end{equation}
where $\{e_k\}$ is a basis of $H^{\text{even}}(X;\bc)$ and $\{e^k\}$ is a dual basis using the metric $\eta$.

Define the partition function with respect to a basis $\{e_{\alpha}\}$ of the even part of the cohomology $H^{\text{even}}(X;\bc)$, with $e_0=1\in H^0(X)$, by $Z_X(\h,\{t_k^{\alpha}\})=\exp F_X(\h,\{t_k^{\alpha}\})$ for
$$F_X(\h,\{t_k^{\alpha}\})=
\sum_{g,{\bf d}}\h^{2g-2}q^{\bf d}\left\langle\exp\left\{\sum_{\alpha,k}\tau_k(e_{\alpha})t_k^{\alpha}\right\}\right\rangle^g_{\bf d}+\langle\tau_1(1)\rangle^1_0\log\h.
$$
and $q^{\bf d}$ is a formal variable living in a Novikov ring which is a power series completion of the semigroup
ring of degrees.  We set $q=1$ when $X=\bp^1$, which we can do more generally in the Fano case.
The presence of the $\log\h$ term is justified by the following {\em dilaton equation}.
$$
\left\langle \tau_1(1)\cdot\prod_{i=1}^n\tau_{b_i}(\alpha_i) \right\rangle ^{g}_{{\bf d}}=(2g-2+n)\left\langle \prod_{i=1}^n\tau_{b_i}(\alpha_i) \right\rangle ^{g}_{{\bf d}}
$$
which can be written as a PDE
$$ \frac{\partial}{\partial t^0_1}Z_X(\h,\{t^{\alpha}_k\})=\left(\h\frac{\partial}{\partial\h}+\sum_{\alpha,k}t^{\alpha}_k\frac{\partial}{\partial t^{\alpha}_k}\right)Z_X(\h,\{t^{\alpha}_k\}).
$$
The $\langle\tau_1(1)\rangle^1_0\log\h$ term is needed so that the PDE above holds in the presence of the $\langle\tau_1(1)\rangle^1_0t^0_1$ term.  Note that if we change coordinates $t^0_1=1+q$, the so-called {\em dilaton shift}, then the dilaton equation is the statement that $Z_X(\h,\{t^{\alpha}_k\})$ is homogeneous of degree zero in $\h$, $q$ and $\{t^{\alpha}_k,\ (\alpha,k)\neq (0,1)\}$.

There is an analogous dilaton equation satisfied by the $\Theta$-Gromov-Witten invariants where $\tau_1(1)$ is replaced by $\tau_0(1)$.
\begin{proposition}   \label{diltheta}
$$
\left\langle \tau_0(1)\cdot\Theta\cdot\prod_{i=1}^n\tau_{b_i}(\alpha_i) \right\rangle ^{g}_{{\bf d}}=(2g-2+n)\left\langle \Theta\cdot\prod_{i=1}^n\tau_{b_i}(\alpha_i) \right\rangle ^{g}_{{\bf d}}
$$
\end{proposition}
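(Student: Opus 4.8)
The plan is to imitate the proof of the ordinary dilaton equation, using property~\eqref{forget} to supply the cotangent class that is ``missing'' from $\tau_0$ as compared with $\tau_1$. Assume $2g-2+n>0$ so that the forgetful maps below exist, and write the left-hand side as an integral over $\overline{\modm}_{g,n+1}(X,{\bf d})$, the extra point carrying the trivial insertion $ev_{n+1}^*(1)=1$:
$$\left\langle\tau_0(1)\cdot\Theta\cdot\prod_{i=1}^n\tau_{b_i}(\alpha_i)\right\rangle^g_{\bf d}=\int_{[\overline{\modm}_{g,n+1}(X,{\bf d})]^{\text{vir}}}p^*\Theta_{g,n+1}\cdot\prod_{i=1}^n\psi_i^{b_i}ev_i^*(\alpha_i).$$
Let $\rho:\overline{\modm}_{g,n+1}(X,{\bf d})\to\overline{\modm}_{g,n}(X,{\bf d})$ forget the $(n+1)$-th point and $\pi:\overline{\modm}_{g,n+1}\to\overline{\modm}_{g,n}$ be the corresponding map on domains, so that $\pi\circ p=p\circ\rho$. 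Applying $p^*$ to property~\eqref{forget}, namely $\Theta_{g,n+1}=\psi_{n+1}\cdot\pi^*\Theta_{g,n}$, and using this commutativity gives $p^*\Theta_{g,n+1}=p^*\psi_{n+1}\cdot\rho^*\Theta^X_{g,n}$, where $\psi_{n+1}$ denotes the cotangent class on $\overline{\modm}_{g,n+1}$. Thus the coupling to $\Theta$ itself produces the factor $p^*\psi_{n+1}$ that will play the role of the $\tau_1$-insertion in the usual argument.

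Next I would reduce the remaining descendant insertions to pullbacks along $\rho$. The standard comparison of cotangent classes on the moduli of stable maps reads $\psi_i=\rho^*\psi_i+D_i$ with $D_i=D(g,\{1,..,\hat i,..,n\},{\bf d}\mid 0,\{i,n+1\},0)$, and $ev_i^*(\alpha_i)=\rho^*ev_i^*(\alpha_i)$. Since $\psi_{n+1}$ vanishes on the boundary divisor of $\overline{\modm}_{g,n+1}$ along which $p_i$ and $p_{n+1}$ bubble off onto a rigid three-pointed rational curve, and $p$ maps $D_i$ into this divisor, we get $p^*\psi_{n+1}\cdot D_i=0$; hence every correction term disappears after multiplication by $p^*\psi_{n+1}$ and
$$p^*\psi_{n+1}\cdot\prod_{i=1}^n\psi_i^{b_i}ev_i^*(\alpha_i)=p^*\psi_{n+1}\cdot\rho^*\Big(\prod_{i=1}^n\psi_i^{b_i}ev_i^*(\alpha_i)\Big).$$
Combining with the first paragraph and applying the projection formula for $\rho$ reduces the whole statement to the single cycle-level identity $\rho_*\big(p^*\psi_{n+1}\cap[\overline{\modm}_{g,n+1}(X,{\bf d})]^{\text{vir}}\big)=(2g-2+n)\,[\overline{\modm}_{g,n}(X,{\bf d})]^{\text{vir}}$; granting it, the integral collapses to $(2g-2+n)\langle\Theta\cdot\prod_i\tau_{b_i}(\alpha_i)\rangle^g_{\bf d}$, the right-hand side.

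The hard part is this cycle-level pushforward, which I would establish by splitting $p^*\psi_{n+1}=\psi_{n+1}^{\text{map}}-C$, where $C$ is the effective stabilization correction supported on the locus $Z$ on which $p$ contracts the rational component carrying $p_{n+1}$. For the map-theoretic class, $\rho_*\big(\psi_{n+1}^{\text{map}}\cap[\,\cdot\,]^{\text{vir}}\big)=(2g-2+n)[\,\cdot\,]^{\text{vir}}$ is exactly the usual dilaton relation, coming from $\rho$ being the universal curve together with $[\overline{\modm}_{g,n+1}(X,{\bf d})]^{\text{vir}}=\rho^*[\overline{\modm}_{g,n}(X,{\bf d})]^{\text{vir}}$ and the fibrewise degree $2g-2+n$ of $\psi_{n+1}$. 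It then remains to show $\rho_*\big(C\cap[\,\cdot\,]^{\text{vir}}\big)=0$: on $Z$ the position of $p_{n+1}$ along the contracted rational tail is a one-dimensional modulus that $\rho$ forgets, so $\rho$ restricted to $Z$ has positive-dimensional fibres and therefore annihilates every cycle supported on $Z$ for dimension reasons. I expect making this vanishing precise — pinning down the geometry of the contracted locus $Z$ and the fibre dimension of $\rho|_Z$ in the virtual setting — to be the only genuinely delicate point, the rest being the formal bookkeeping of the classical dilaton argument.
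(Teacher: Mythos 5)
Your proof follows the paper's own argument in all essentials: you pull property~\eqref{forget} back through the commuting square to obtain $\Theta^X_{g,n+1}=\overline{\psi}_{n+1}\cdot\rho^*\Theta^X_{g,n}$ (your $p^*\psi_{n+1}$ is the paper's ancestor class $\overline{\psi}_{n+1}$ and your $\rho$ is its $\Pi$), kill the comparison divisors $D_i$ against this factor so that the descendant insertions become $\rho$-pull-backs, and reduce everything to the cycle identity $\rho_*\bigl(\overline{\psi}_{n+1}\cap[\overline{\modm}_{g,n+1}(X,{\bf d})]^{\text{vir}}\bigr)=(2g-2+n)[\overline{\modm}_{g,n}(X,{\bf d})]^{\text{vir}}$, which is exactly the paper's \eqref{pushpsi}. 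Your direct argument that $p^*\psi_{n+1}|_{D_i}=0$, via $p(D_i)\subset\delta_{0,\{i,n+1\}}$ and the vanishing of $\psi_{n+1}$ on the $\overline{\modm}_{0,3}$ factor, is a clean substitute for the paper's derivation of the same fact from \eqref{psidiv}. The one genuine divergence is how you prove \eqref{pushpsi}: the paper gets it in one line from $\Pi_*\overline{\psi}_{n+1}=\Pi_*p^*\psi_{n+1}=p^*\pi_*\psi_{n+1}=2g-2+n$ together with the axiom $[\overline{\modm}_{g,n+1}(X,{\bf d})]^{\text{vir}}=\Pi^*[\overline{\modm}_{g,n}(X,{\bf d})]^{\text{vir}}$, never touching the stabilization correction $C$. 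Your route through the ordinary dilaton equation for $\psi_{n+1}$ plus $\rho_*\bigl(C\cap[\,\cdot\,]^{\text{vir}}\bigr)=0$ can be made to work, but the vanishing you flag as delicate is not purely ``for dimension reasons'': $\rho_*$ certainly does not annihilate \emph{every} cycle supported on the contracted locus (a point pushes forward to a point). What saves the argument is that $C=\sum[D(g,\{1,\dots,n\},{\bf d}_1\mid 0,\{n+1\},{\bf d}_2)]^{\text{vir}}$ and each such virtual class is pulled back along the forgetful map $\overline{\modm}_{0,2}(X,{\bf d}_2)\to\overline{\modm}_{0,1}(X,{\bf d}_2)$ on the tail factor, whose fibres are generically one-dimensional; that compatibility is the same virtual-class axiom the paper invokes, so you may as well apply it in the paper's form and dispense with $C$ altogether.
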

\begin{proof}
The proof is analogous to the proof of the dilaton equation---see  for example \cite{GatGro}.  For $2g-2+n>0$, define 
$$\overline{\psi}_i=p^*\psi_i\in H^2(\overline{\modm}_{g,n}(X,{\bf d}))$$
which is used in \eqref{anc} to define ancestor Gromov-Witten invariants.  The following square is commutative,    
$$\begin{array}{ccc}\overline{\modm}_{g,n+1}(X,{\bf d})&\stackrel{\Pi}{\rightarrow}&\overline{\modm}_{g,n}(X,{\bf d})\\
p\downarrow\quad&&p\downarrow\\
\overline{\modm}_{g,n+1}&\stackrel{\pi}{\rightarrow}&\quad\overline{\modm}_{g,n}
\end{array}
$$
where the forgetful map upstairs is denoted by $\Pi$. 
The pull-back property \eqref{forget} satisfied by $\Theta_{g,n}$ downstairs implies the pull-back property upstairs
$$\Theta^X_{g,n+1}=\overline{\psi}_{n+1}\cdot\Pi^*\Theta^X_{g,n}
$$
since
$$\overline{\psi}_{n+1}\cdot\Pi^*\Theta^X_{g,n}=\overline{\psi}_{n+1}\cdot\Pi^*p^*\Theta_{g,n}=\overline{\psi}_{n+1}\cdot p^*\pi^*\Theta_{g,n}=p^*(\psi_{n+1}\cdot \pi^*\Theta_{g,n})=p^*\Theta_{g,n+1}=\Theta^X_{g,n+1}.$$
The dilaton equation is satisfied downstairs, and upstairs by both descendant and ancestor Gromov-Witten invariants.  Downstairs, we have $\pi_*\psi_{n+1}=2g-2+n$ from integration of $\psi_{n+1}$ along a fibre giving the first Chern class of the cotangent bundle.  Upstairs, integration along the fibre gives $\Pi_*\psi_{n+1}=2g-2+n$ and also
$\Pi_*\overline{\psi}_{n+1}=\Pi_*p^*\psi_{n+1}=p^*\pi_*\psi_{n+1}=p^*(2g-2+n)=2g-2+n$.  Hence by the axiom proven in \cite{BehGro} $[\overline{\modm}_{g,n+1}(X,{\bf d})]^{\text{vir}}=\Pi^*[\overline{\modm}_{g,n}(X,{\bf d})]^{\text{vir}}$ we have
\begin{equation}  \label{pushpsi}
\Pi_*(\psi_{n+1}\cdot[\overline{\modm}_{g,n+1}(X,{\bf d})]^{\text{vir}})=(2g-2+n)[\overline{\modm}_{g,n}(X,{\bf d})]^{\text{vir}}=\Pi_*(\overline{\psi}_{n+1}\cdot[\overline{\modm}_{g,n+1}(X,{\bf d})]^{\text{vir}}).
\end{equation}

Downstairs, we have $\psi_{n+1}\cdot D_i=0$ for $D_i$ the $i$th section of the universal curve (given by the ${\bf d}=0$ case of $D_i$ defined above).  Similarly, upstairs
\begin{equation}  \label{psint}
\psi_{n+1}\cdot[D_i]^{\text{vir}}=0=\overline{\psi}_{n+1}\cdot[D_i]^{\text{vir}},\quad \text{for\ } i=1,...,n.
\end{equation}
The first equality in \eqref{psint}, uses the fact that the restriction of $\psi_{n+1}$ to $D_i$ factors via its restriction to $\overline{\modm}_{0,3}$ and hence vanishes.  The second equality in \eqref{psint} uses the first equality in \eqref{psint} together with the relation proven in \cite{GatGro} applied to $i=n+1$
\begin{equation}  \label{psidiv}
\psi_{i}=\overline{\psi}_{i}+\sum_{{\bf d}_1+{\bf d}_2={\bf d}}[D(g,\{1,..,\hat{i},..n\},{\bf d}_1\mid 0,\{i\},{\bf d}_2)]^{\text{vir}}.
\end{equation} 
and the vanishing
$[D(g,\{1,..,n\},{\bf d}_1\mid 0,\{n+1\},{\bf d}_2)]^{\text{vir}}\cdot[D_i]^{\text{vir}}=0$.

Downstairs we have $\psi_i=\pi^*\psi_i+s_i(\overline{\modm}_{g,n})$ and upstairs we have
$\psi_i=\Pi^*\psi_i+s_i(\overline{\modm}_{g,n}(X,{\bf d}))$, where $s_i$ is the $i$th section of the universal curve for $i\in\{1,...,n\}$, or more precisely
\begin{equation}  \label{ancdesclass}
\psi_i\cdot[\overline{\modm}_{g,n+1}(X,{\bf d})]^{\text{vir}}=\Pi^*\psi_i\cdot[\overline{\modm}_{g,n+1}(X,{\bf d})]^{\text{vir}}+[D_i]^{\text{vir}}.
\end{equation}
Since $\psi_i\cdot[D_i]^{\text{vir}}=0$ most terms in powers of this relation vanish to give the following from \cite{GatGro}.  For any $\omega\in H^*(\overline{\modm}_{g,n+1}(X,{\bf d}))$ there are classes $\eta_i\in H^*(\overline{\modm}_{g,n}(X,{\bf d}))$ such that
\begin{align}   \label{push}
\Pi_*\left(\prod_{i=1}^n\psi_i^{b_i}ev_i^\ast(\alpha_i)\cdot\omega\cdot[\overline{\modm}_{g,n+1}(X,{\bf d})]^{\text{vir}}\right)&\\
=\prod_{i=1}^n\psi_i^{b_i}ev_i^\ast(\alpha_i)\cdot&\Pi_*(\omega\cdot[\overline{\modm}_{g,n+1}(X,{\bf d})]^{\text{vir}})+\sum_{i=1}^n \eta_i\cdot\Pi_*(\omega\cdot[D_i]^{\text{vir}}).\nonumber
\end{align}
The proof of the dilaton equation for usual Gromov-Witten invariants applies \eqref{push} to $\omega=\psi_{n+1}$.  Instead we apply \eqref{push} to $\omega=\Theta^X_{g,n+1}$, which acts in some ways like $\psi_{n+1}$.  In particular, in both cases the final term of \eqref{push} vanishes, i.e. by \eqref{psint} 
$$\Theta^X_{g,n+1}\cdot[D_i]^{\text{vir}}=\Pi^*\Theta^X_{g,n}\cdot\overline{\psi}_{n+1}\cdot[D_i]^{\text{vir}}=0.$$ 
This vanishing together with \eqref{pushpsi} gives
\begin{align*}
\Pi_*\left(\prod_{i=1}^n\psi_i^{b_i}ev_i^\ast(\alpha_i)\cdot\Theta^X_{g,n+1}\cdot[\overline{\modm}_{g,n+1}(X,{\bf d})]^{\text{vir}}\right)&=\prod_{i=1}^n\psi_i^{b_i}ev_i^\ast(\alpha_i)\cdot\Pi_*(\Theta^X_{g,n+1}\cdot[\overline{\modm}_{g,n+1}(X,{\bf d})]^{\text{vir}})\\
&=\prod_{i=1}^n\psi_i^{b_i}ev_i^\ast(\alpha_i)\cdot\Theta^X_{g,n}\cdot\Pi_*(\overline{\psi}_{n+1}\cdot[\overline{\modm}_{g,n+1}(X,{\bf d})]^{\text{vir}})\\
&=(2g-2+n)\prod_{i=1}^n\psi_i^{b_i}ev_i^\ast(\alpha_i)\cdot\Theta^X_{g,n}\cdot[\overline{\modm}_{g,n}(X,{\bf d})]^{\text{vir}}.
\end{align*}
Integrate this relation to get the statement of the proposition.
\end{proof}
\begin{remark}
When $X=\{pt\}$ the dilaton equation is
$$
\left\langle \Theta_{g,n+1}\cdot\tau_0\cdot\prod_{i=1}^n\tau_{b_i} \right\rangle ^{g}=(2g-2+n)\left\langle \Theta_{g,n}\cdot\prod_{i=1}^n\tau_{b_i} \right\rangle ^{g}
$$
which is an immediate consequence of $\Theta_{g,n+1}=\psi_{n+1}\cdot\pi^*\Theta_{g,n}$.  We see that $\tau_1$ in the usual dilaton equation is replaced by $\tau_0$ essentially because $\Theta_{g,n+1}$ has a factor of $\psi_{n+1}$, and this leads to $\tau_0(1)$ in place of $\tau_1(1)$ for general $X$.
\end{remark}

The dilaton equation allows us to define the genus zero 1-point invariants by:
\begin{equation}  \label{def01}
\left\langle\Theta\cdot\tau_k(e_{\alpha})\right\rangle^0_{\bf d}:=-\left\langle\Theta\cdot\tau_0(1)\tau_k(e_{\alpha})\right\rangle^0_{\bf d}=-\left\langle\tau_0(1)\tau_k(e_{\alpha})\right\rangle^0_{\bf d}
\end{equation}
which uses the genus zero 2-point usual Gromov invariants.  It would be desirable to have a geometric definition of these genus zero 1-point invariants.  The dimension of the moduli space of stable maps in the usual case is given by $\dim[\overline{\modm}_{0,1}(\bp^1,{\bf d})]^{\text{vir}}=2d-1$, while for the $\Theta$-Gromov invariants we would expect to construct a space of maps of dimension given by $\dim[\modm^{\Theta}_{0,1}(\bp^1,{\bf d})]^{\text{vir}}=2d$.

Define the partition function with respect to a basis $\{e_{\alpha}\}$ of the even part of the cohomology $H^{\text{even}}(X;\bc)$, by $Z^{\Theta}_X(\h,\{t_k^{\alpha}\})=\exp F^{\Theta}_X(\h,\{t_k^{\alpha}\})$ for
\begin{equation}  \label{freenerg}
F^{\Theta}_X(\h,\{t_k^{\alpha}\})=
\sum_{g,{\bf d}}\h^{2g-2}\left\langle\Theta\cdot\exp\left\{\sum_{\alpha,k}\tau_k(e_{\alpha})t_k^{\alpha}\right\}\right\rangle^g_{\bf d}+\langle\Theta\cdot\tau_0(1)\rangle^1_0\log\h.
\end{equation}
Again $\langle\Theta\cdot\tau_0(1)\rangle^1_0\log\h$ is required in \eqref{freenerg} as a boundary term for the PDE version of the relation in Proposition~\ref{diltheta} given by
\begin{equation}  \label{dilPDE}
 \frac{\partial}{\partial t^0_0}Z^{\Theta}_X(\h,\{t^{\alpha}_k\})=\left(\h\frac{\partial}{\partial\h}+\sum_{\alpha,k}t^{\alpha}_k\frac{\partial}{\partial t^{\alpha}_k}\right)Z^{\Theta}_X(\h,\{t^{\alpha}_k\}).
\end{equation}
If we change coordinates $t^0_0=1+q$ then we see that
$Z^{\Theta}_X(\h,\{t^{\alpha}_k\})$ is homogeneous of degree zero in $\h$, $q$ and $\{t^{\alpha}_k,\ (\alpha,k)\neq (0,0)\}$ which is apparent in the first few terms of \eqref{freenergy}.

The partition functions $Z_X$ and $Z^{\Theta}_X$ can be calculated from the following CohFT and degenerate CohFT constructed out of $X$. 
\begin{definition}  \label{cohftGW}
Associate to a variety $X$ the pair $(V_X,\eta)$ where $V_X=H^{\text{even}}(X;\bc)$, 
equipped with the metric  
$$\eta(\alpha,\beta)=\int_X\alpha\wedge\beta.$$
Define a CohFT $\Omega_X$ on $(V_X,\eta)$, i.e.
$(\Omega_X)_{g,n}:V_X^{\otimes n}\to H^*(\overline{\modm}_{g,n})$,  by 
$$(\Omega_X)_{g,n}(\gamma_1,...\gamma_n)=\sum_{{\bf d}}q^{\bf d}p_*\left(\prod_{i=1}^nev_i^\ast(\gamma_i)\cap[\overline{\modm}_{g,n}(X,{\bf d})]^{\text{vir}}\right)\in H^*(\overline{\modm}_{g,n})$$
for $\gamma_i\in H^{\text{even}}(X;\bc)$.   In \cite{BehGro,KMaGro} it is proven that $\Omega_X$ is a CohFT with flat unit given by $1\in H^*(X)$.  Note that the dependence of $p=p(g,n,{\bf d})$ on ${\bf d}$ (which is suppressed) results in $(\Omega_X)_{g,n}(\alpha_1,...\alpha_n)$ being a sum of different degree terms. 
\end{definition}
Denote $\hat{Z}_X(\h,\{\bar{t}^{\alpha}_k\})=Z_{\Omega_X}(\h,\{\bar{t}^{\alpha}_k\})$ to be the partition function defined in \eqref{partfun} of the CohFT $\Omega_X$.
$$
\hat{Z}_X(\h,\{\bar{t}^{\alpha}_k\})=\exp\sum_{g,{\bf d}}\h^{2g-2}q^{\bf d}\sum_{n,\vec{\alpha},\vec{k}}\frac{1}{n!}\int_{\overline{\modm}_{g,n}}p_*\left(\prod_{i=1}^nev_i^\ast(e_{\alpha_i})\cap[\overline{\modm}_{g,n}(X,{\bf d})]^{\text{vir}}\right)\cdot\prod_{j=1}^n\psi_j^{k_j}\prod \bar{t}^{\alpha_j}_{k_j}.
$$
The partition functions $Z_X$ and $\hat{Z}_X$ are related by a linear change of variables and multiplication by extra unstable terms---see \eqref{ancdes} below.  We use variables $\bar{t}^{\alpha}_k$ in $\hat{Z}_X$ to facilitate this change of variables.  For $2g-2+n>0$, define {\em ancestor} Gromov-Witten invariants of $X$ by
\begin{equation} \label{anc}
\left\langle \prod_{i=1}^n\overline{\tau}_{b_i}(\alpha_i) \right\rangle ^{g}_{{\bf d}}:=\int_{[{\modm}_{g,n}(X,{\bf d})]^{vir}} \prod_{i=1}^n\overline{\psi}_i^{b_i}ev_i^\ast(\alpha_i)
\end{equation} 
where $\overline{\psi}_i=p^*\psi_i$ replaces $\psi_i$ in \eqref{des}.  The integrand in \eqref{anc} pushes forward to the integrand in the definition of $\hat{Z}_X$.  Define $\hat{Z}_X$ similarly to $Z_X$ by
$$\hat{Z}_X(\h,\{\bar{t}^{\alpha}_k\})=\exp
\sum_{g,{\bf d}}\h^{2g-2}\left\langle\exp\left\{\sum_{\alpha,k}\overline{\tau}_k(e_{\alpha})\bar{t}_k^{\alpha}\right\}\right\rangle^g_{\bf d}
$$
We now refer to the usual Gromov-Witten invariants defined in \eqref{des} as {\em descendant} Gromov-Witten invariants, and $Z_X$, respectively $\hat{Z}_X$, as the descendant, respectively ancestor, partition functions.

The relationship between the descendant and the ancestor invariants uses a sequence of endomorphisms $\{S_k\}$, $k\in \bn$, defined by 
\begin{equation}  \label{SKM}
(S_k)^\alpha_\beta=\sum_{\bf d}\langle\tau_0(e^\alpha)\tau_k(e_\beta)\rangle^0_{\bf d}  
\end{equation}
where $e^\alpha=\eta^{\alpha\beta}e_\beta$.  It is proven in \cite{KMaRel} that
\begin{equation}   \label{ancdes}
Z^{st}_X(\h,\{t^{\alpha}_k\})=\left.\hat{Z}_X(\h,\{\bar{t}^{\alpha}_k\})\right|_{ \overline{t}^{\alpha}_k= \sum\limits_{m\geq k}(S_{m-k})^\alpha_\beta t^\beta_m}
\end{equation}
where $Z^{st}_X(\h,\{t^{\alpha}_k\})$ is the stable part defined by
$$Z_X(\h,\{t^{\alpha}_k\})=Z^{st}_X(\h,\{t^{\alpha}_k\})\cdot\exp\h^{-2}\left(\sum_{k,\alpha,{\bf d}}\langle\tau_k(e_\alpha)\rangle^0_{\bf d} t_k^\alpha+\frac12\sum_{\vec{k},\vec{\alpha},{\bf d}}\langle\tau_{k_1}(e_{\alpha_1})\tau_{k_2}(e_{\alpha_2})\rangle^0_{\bf d} t_{k_1}^{\alpha_1}t_{k_2}^{\alpha_2}\right).
$$

Following Definition~\ref{cohftheta}, we define the degenerate CohFT $\Omega_X^{\Theta}$ by
$$(\Omega_X^{\Theta})_{g,n}(\gamma_1,...\gamma_n)=\Theta_{g,n}\cdot\sum_{{\bf d}}p_*\left(\prod_{i=1}^nev_i^\ast(\gamma_i)\right)\in H^*(\overline{\modm}_{g,n}).$$
and its partition function
$$\hat{Z}^{\Theta}_{X}(\h,\{\bar{t}^{\alpha}_k\})=\exp\sum_{\Small{\begin{array}{c}g,n,\vec{k}\\ \vec{\alpha},{\bf d}\end{array}}}\frac{\h^{2g-2}}{n!}\int_{\overline{\modm}_{g,n}}\Theta_{g,n}\cdot p_*\left(\prod_{i=1}^nev_i^\ast(e_{\alpha_i})\right)\cdot\prod_{j=1}^n\psi_j^{k_j}\prod \bar{t}^{\alpha_j}_{k_j}.
$$
The next proposition shows that the relationship \eqref{ancdes} also holds between the descendant and the ancestor $\Theta$-Gromov-Witten invariants, where again the stable part is defined by removing the genus zero 1-point and 2-point functions.
\begin{proposition}  \label{th:ancdest}
The ancestor and descendant $\Theta$-Gromov-Witten invariants are related by:
\begin{equation}   \label{ancdest}
(Z^{\Theta}_X)^{st}(\h,\{t^{\alpha}_k\})=\left.\hat{Z}^{\Theta}_X(\h,\{\bar{t}^{\alpha}_k\})\right|_{ \overline{t}^{\alpha}_k= \sum\limits_{m\geq k}(S_{m-k})^\alpha_\beta t^\beta_m}
\end{equation} 
using the same change of variable as for usual Gromov-Witten invariants.
\end{proposition}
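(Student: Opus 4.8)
The plan is to transcribe the Kabanov--Manin argument of \cite{KMaRel} that establishes \eqref{ancdes}, carrying the extra factor $\Theta^X_{g,n}=p^*\Theta_{g,n}$ throughout. The whole content of \eqref{ancdes} is driven by the comparison formula \eqref{psidiv}, which rewrites each descendant class $\psi_i$ on $\overline{\modm}_{g,n}(X,{\bf d})$ as the ancestor class $\overline{\psi}_i$ plus a sum of boundary divisors $D(g,\{1,..,\hat{i},..,n\},{\bf d}_1\mid 0,\{i\},{\bf d}_2)$ in which a genus zero component of degree ${\bf d}_2>0$ carries only the marked point $x_i$ and the node. This relation lives on the moduli of stable maps and makes no reference to $\Theta$. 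First I would insert $\Theta^X_{g,n}$ into the integrals defining $\hat{Z}^{\Theta}_X$, substitute \eqref{psidiv} for each power $\psi_i^{b_i}$, expand, and reorganise the boundary contributions according to the genus zero tails sprouting from the marked points.

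The one new geometric input is the restriction of $\Theta^X$ to these boundary divisors. The genus zero component carrying $x_i$ and the node has only two special points, hence is unstable and is contracted by the forgetful map $p$; on the stabilised domain the point $x_i$ sits at the former node, which is a smooth point of the curve. Since $\Theta^X=p^*\Theta_{g,n}$ is pulled back from $\overline{\modm}_{g,n}$ and $p$ restricted to the divisor factors through this contraction, $\Theta^X$ restricts to the pull-back of $\Theta^X_{g,n}$ on the main component alone, with the node relabelled $x_i$, and contributes no factor at all along the contracted tail. In particular no genus zero class $\Theta_{0,2}$ enters the splitting, which is essential since such a class would vanish and wrongly trivialise the change of variables. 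The relevant compatibilities of $\Theta^X$ with the forgetful map and with the boundary divisors have already been assembled in the proof of Proposition~\ref{diltheta}.

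With this in hand the splitting axiom \eqref{div}, now carrying the insertion $\Theta^X$, places $\Theta$ on the main component, while each contracted genus zero tail contributes the usual, $\Theta$-free, genus zero two-point descendant invariants $\langle\tau_0(e^\alpha)\tau_k(e_\beta)\rangle^0_{\bf d}$, namely the entries of the matrices $S_k$ in \eqref{SKM}. The combinatorics of summing over genus zero tails is therefore word for word the same as in the usual case: it produces the same operator $\sum_{m\geq k}(S_{m-k})^\alpha_\beta$ on the variables, and hence the identical change of variables $\overline{t}^\alpha_k=\sum_{m\geq k}(S_{m-k})^\alpha_\beta t^\beta_m$. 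The unstable $(g,n)=(0,1)$ and $(0,2)$ terms stripped off to form the stable parts $(Z^{\Theta}_X)^{st}$ and $Z^{st}_X$ match because the genus zero two-point $\Theta$-invariants coincide with the usual ones by definition and the genus zero one-point invariants are obtained from them through \eqref{def01}.

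The main obstacle is exactly the boundary restriction of the second paragraph: one must verify carefully that $\Theta$ rides entirely on the stable core and leaves every genus zero tail free of any $\Theta$ factor, so that the tails reproduce the usual nonvanishing $S$-matrix rather than a spurious vanishing $\Theta_{0,2}$. Once this restriction is in place, the rest is a formal copy of the Kabanov--Manin resummation with $\Theta^X_{g,n}$ carried inertly along.
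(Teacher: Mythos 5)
Your proposal is correct and follows essentially the same route as the paper: the proof in the text also runs the Kontsevich--Manin comparison \eqref{psidiv} through the integrals with $\Theta^X_{g,n}$ carried along (organised via mixed insertions $\tau_{a,b}$ of $\psi$ and $\overline{\psi}$), and the key point you isolate --- that $\Theta$ restricts to the stable core while each contracted genus-zero tail contributes the ordinary, $\Theta$-free two-point invariants forming the $S$-matrix --- is exactly what makes the correction term in the paper's recursion a product of a $\Theta$-invariant with a usual genus-zero invariant. (The reference is Kontsevich--Manin, not Kabanov--Manin, but that is immaterial to the argument.)
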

\begin{proof}
The proof of \eqref{ancdes} uses an inductive argument based on the relationship between $\psi_i$ and $\overline{\psi}_i$ in \eqref{psidiv}.  It immediately allows cup product with $\Theta_{g,n}$ which we describe here.

In order to compare insertions of $\psi_i$ and $\overline{\psi}_i$ we need the following generalised $\Theta$-Gromov-Witten invariants involving insertions of both:
\begin{equation} \label{gendes}
\left\langle \Theta\cdot\prod_{i=1}^n\tau_{a_i,b_i}(\alpha_i) \right\rangle ^{g}_{{\bf d}}:=\int_{[{\modm}_{g,n}(X,{\bf d})]^{vir}} \Theta_{g,n}^X\cdot\prod_{i=1}^n\psi_i^{a_i}\overline{\psi}_i^{b_i}ev_i^\ast(\alpha_i).
\end{equation}
This was introduced by Kontsevich and Manin \cite{KMaRel} for the usual Gromov-Witten invariants.  By taking the cup product of $\Theta_{g,n}^X\cdot\prod_{i=1}^n\psi_i^{a_i}\overline{\psi}_i^{b_i}ev_i^\ast(\alpha_i)$ with the relation \eqref{psidiv} and integrating, we have
\begin{align*} 
\left\langle \Theta\cdot\tau_{a_1+1,b_1}(\alpha_1)\prod_{i=2}^n\tau_{a_i,b_i}(\alpha_i) \right\rangle ^{g}_{{\bf d}}=&\left\langle \Theta\cdot\tau_{a_1,b_1+1}(\alpha_1)\prod_{i=2}^n\tau_{a_i,b_i}(\alpha_i) \right\rangle ^{g}_{{\bf d}}\\
&\qquad+
\sum_{k,{\bf d}_2}\left\langle \Theta\cdot\tau_{0,b_1}(e_k)\prod_{i=2}^n\tau_{a_i,b_i}(\alpha_i) \right\rangle ^{g}_{{\bf d}_1}\left\langle \tau_0(e^k)\tau_{a_1,0}(\alpha_1) \right\rangle ^{0}_{{\bf d}_2}
\end{align*}
where ${\bf d}_1+{\bf d}_2={\bf d}$.
The sum over the basis $\{e_k\}$ of $H^{\text{even}}(X;\bc)$ and its dual basis $\{e^k\}$ comes from restriction to the divisor given in \eqref{div}.  Apply this relation to the sequence of insertions $\tau_{m_1}=\tau_{m_1,0}$, $\tau_{m_1-1,1}$,...,$\tau_{0,m_1}=\overline{\tau}_{m_1}$ (and set $b_i=0$ for $i>0$) to get 
$$
\left\langle \Theta\cdot\prod_{i=1}^n\tau_{m_i}(\alpha_i) \right\rangle ^{g}_{{\bf d}}=\left\langle \Theta\cdot\overline{\tau}_{m_1}(\alpha_1)\prod_{i=2}^n\tau_{m_i}(\alpha_i) \right\rangle ^{g}_{{\bf d}}
+\mathop{\sum_{a+b=m_1}}_{k,{\bf d}_2}\left\langle \Theta\cdot\overline{\tau}_a(e_k)\prod_{i=2}^n\tau_{m_i}(\alpha_i) \right\rangle ^{g}_{{\bf d}_1}\left\langle \tau_0(e^k)\tau_b(\alpha_1) \right\rangle ^{0}_{{\bf d}_2}.
$$
This gives exactly the coordinate change in \eqref{ancdest}.
\end{proof}

\subsection{Givental action}   \label{Giv}
In this section we recall the group action on cohomological field theories due to Givental.  The action on a CohFT $\Omega$ is presented as a weighted sum over stable graphs with vertices weighted by $\Omega_{g,n}$, and edges weighted by interesting combinations of $\psi$ classes.  Givental's original action was defined on partition functions of CohFTs.  The action on CohFTs , i.e. the actual cohomology classes in $H^*(\overline{\modm}_{g,n})$, described here was discovered independently, by Katzarkov-Kontsevich-Pantev, Kazarian and Teleman---see \cite{PPZRel,ShaBCOV}.

Consider an element of the loop group $LGL(V)$ given by a formal series
$$R(z)  =  \sum_{k=0}^\infty R_k z^k$$
where $R_k$ are endomorphisms of $V$ and $R_0=Id$.  The twisted loop group $L^{(2)}GL(V)\subset LGL(V)$ is the subgroup defined to consist of elements satisfying
$$ R(z)R(-z)^T=Id.
$$
Choose a basis of $V$, known as the normalised canonical basis, with respect to which $\eta=I$ and the endomorphisms $R(z)$ become matrices.   For $R\in L^{(2)}GL(V)$, define 
\begin{equation}   \label{edgewt}
\ce(w,z)=\frac{I-R^{-1}(z)R^{-1}(w)^T}{w+z}=\sum_{i,j\geq 0}\ce_{ij}w^iz^j
\end{equation}
which has the power series expansion on the right since the numerator $I-R^{-1}(z)R^{-1}(w)^T$ vanishes at $w=-z$ since $R^{-1}(z)$ is also an element of the twisted loop group.

Givental's action is defined via weighted sums over stable graphs.  Dual to any point $(C,p_1,...,p_n)\in\overline{\modm}_{g,n}$ is its stable graph $\Gamma$ with vertices $V(\Gamma)$ representing irreducible components of $C$, internal edges representing nodal singularities and a (labeled) external edge for each $p_i$.  
\begin{definition}
Define $G_{g,n}$ to be the set of all stable, connected, genus $g$ graphs with $n$ labeled external leaves, dual to a stable curve $(C,p_1,...,p_n)$.  For any graph $\gamma\in G_{g,n}$ denote by
$$V(\gamma),\quad E(\gamma),\quad H(\gamma),\quad L(\gamma)=L^*(\gamma)\sqcup L^\bullet(\gamma)$$
its set of vertices, edges, half-edges and leaves, or external edges.  Associated to any vertex $v\in V(\gamma)$ is its valence $n_v$ and genus $g_v$.  The leaves $L(\gamma)$ consist of ordinary leaves $L^*$ and dilaton leaves $L^\bullet$.  Associated to any ordinary leaf $\ell\in L^*$ is its label $p(\ell)\in\{1,2,...,n\}$.  The set of half-edges consists of leaves and oriented edges so there is an injective map $L(\gamma)\to H(\gamma)$ and a multiply-defined map $E(\gamma)\to H(\gamma)$ denoted by $E(\gamma)\ni e\mapsto \{e^+,e^-\}\subset H(\gamma)$.
The map sending a half-edge to its vertex is given by $v:H(\gamma)\to V(\gamma)$.  The {\em genus} of $\gamma$ is $g(\gamma)=\displaystyle b_1(\gamma)+\hspace{-2mm}\sum_{v\in V(\gamma)}\hspace{-2mm}g(v)$.  We say $\gamma$ is {\em stable} if any vertex labeled by $g=0$ is of valency $\geq 3$ and there are no isolated vertices labeled by $g=1$.   
\end{definition}
For a given stable graph $\Gamma$ of genus $g$ and with $n$ external edges we have
$$\phi_{\Gamma}:\overline{\modm}_{\Gamma}=\prod_{v\in V(\Gamma)}\overline{\modm}_{g_v,n_v}\to\overline{\modm}_{g,n}.$$

Given a CohFT $\Omega=\{\Omega_{g,n}\in H^*(\overline{\modm}_{g,n})\otimes (V^*)^{\otimes n}\mid g,n\in\bn,2g-2+n>0\}$, following \cite{PPZRel,ShaBCOV} define a new CohFT $R\Omega=\{(R\Omega)_{g,n}\}$ by a weighted sum over stable graphs, with weights defined as follows.
\begin{enumerate}[(i)]
\item {\em Vertex weight:} $w(v)=\Omega_{g_v,n_v}\in (V^*)^{\otimes n_v}\otimes H^*(\overline{\modm}_{g_v,n_v})$ at each vertex $v$ 
\item {\em Leaf weight:} $w(\ell)=R^{-1}(\psi_{p(\ell)})\in End(V)\otimes H^*(\overline{\modm}_{g_{v(\ell)},n_{v(\ell)}})$ at each leaf $\ell$
\item {\em Edge weight:} $w(e)=\ce(\psi_e',\psi_e'')\in V^{\otimes 2}\otimes H^*(\overline{\modm}_{g_{v(e')},n_{v(e')}})\otimes H^*(\overline{\modm}_{g_{v(e'')},n_{v(e'')}})$ at each edge $e$
\end{enumerate}
Then
$$(R\Omega)_{g,n}=\sum_{\Gamma\in G_{g,n}}\frac{1}{|{\rm Aut}(\Gamma)|}(p_{\Gamma})_*\hspace{-6mm}\prod_{\begin{array}{c}v\in V(\Gamma)\\\ell\in L^*(\Gamma)\\ e\in E(\Gamma)\end{array}} \hspace{-4mm}w(v) w(\ell)w(e)
$$
where we contract in the $V$ factor to get $w(v)w(\ell)w(e)\in (V^*)^n\otimes H^*(\overline{\modm}_\Gamma)$. 
This defines an action of the twisted loop group on CohFTs.  
 
Define a translation action of $T(z)\in z^2V[[z]]$ on the sequence $\Omega_{g,n}$ aby
\begin{equation}  \label{transl}
(T\Omega)_{g,n}(v_1\otimes...\otimes v_n)=\sum_{m\geq 0}\frac{1}{m!}p_*\Omega_{g,n+m}(v_1\otimes...\otimes v_n\otimes T(\psi_{n+1})\otimes...\otimes  T(\psi_{n+m}))
\end{equation}
where $p:\overline{\modm}_{g,n+m}\to\overline{\modm}_{g,n}$ is the forgetful map.
Since $T(z)\in z^2V[[z]]$, $\dim\overline{\modm}_{g,n+m}=3g-3+n+m$ grows more slowly in $m$ than the degree $2m$ coming from $T$ which ensures the sum \eqref{transl} is finite.  
\begin{remark}
We can relax the requirement $T(z)\in z^2V[[z]]$ and instead allow $T(z)\in zV[[z]]$ to act via \eqref{transl} but now one must control $\Omega_{g,n}$ as $n\to\infty$ to ensure \eqref{transl} is a finite sum.  Later we will see examples where $\Omega_{g,n}(v_1\otimes...\otimes v_n)\cdot\alpha=0$ for $\deg\alpha\geq g-1$ so in particular the degree $m$ coming from $T(z)\in zV[[z]]$ will eventually annihilate $\Omega_{g,n+m}$ ensuring the sum \eqref{transl} is finite.
\end{remark}
The translation action can be realised graphically via:
\begin{enumerate}[(iv)]
\item {\em Dilaton leaf weight:}  $w(\ell)=T(\psi_{p(\ell)})$ at each dilaton leaf $\ell\in L^\bullet$.
\end{enumerate}

The action of $R(z)$ and $T(z)$ on CohFTs does not require the condition \eqref{unmet} and hence generalises to an action on degenerate CohFTs.  The action on (degenerate) CohFTs immediately gives rise to an action on partition functions, which store correlators of the sequence of cohomology classes.  It gives a graphical construction of the partition functions $Z_{R\Omega}$ and $Z_{T\Omega}$ out of the partition function $Z_{\Omega}$.  

Givental and Teleman \cite{GivGro,TelStr} proved that the twisted loop group acts transitively on semisimple CohFTs.  This gives a way to construct semisimple CohFTs on a vector space of dimension $D$---simply act on $D$ copies of the trivial CohFT.   The partition function of a semisimple CohFT is then constructed as a graphical expansion with vertex contributions given by the basic building block $Z^{\text{KW}}(\h,t_0,t_1,...)$, defined in \eqref{KW}, which represents the trivial CoHFT.      
The following theorem produces a CohFT $\Omega$ out of a TFT, i.e. a degree zero CohFT, which necessarily coincides with  $\Omega^{\text{top}}$ defined in \eqref{cohfttop}.
\begin{theorem}[\cite{GivGro,TelStr}]  \label{GivTelThm}
Given a semisimple CohFT $\Omega$, there exists $R(z)\in L^{(2)}GL(V)$ and $T(z)\in z^2V[[z]]$ such that
$$\Omega=R\cdot T\cdot\Omega^{\text{top}}.
$$
\end{theorem}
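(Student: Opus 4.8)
The plan is to realise $\Omega$ as lying in the $(R,T)$-orbit of its own topological part, following the strategy of Givental and Teleman. First I would exploit semisimplicity to pass to the normalised canonical basis $\{u_1,\dots,u_D\}$ in which $u_i\cdot u_j=\delta_{ij}u_i$ and $\eta(u_i,u_j)=\delta_{ij}\eta_i$; in this basis the topological field theory $\Omega^{\text{top}}$ is a direct sum of $D$ rescaled copies of the one-dimensional theory, determined entirely by the numbers $\eta_i$ together with $\Omega^{\text{top}}_{0,1}$. Because $R_0=\mathrm{Id}$ and $T(z)\in z^2V[[z]]$ contribute only classes of positive cohomological degree through \eqref{edgewt} and \eqref{transl}, the degree-zero part of $R\cdot T\cdot\Omega^{\text{top}}$ is again $\Omega^{\text{top}}$. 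Thus the entire content of the theorem is that the positive-degree part of a semisimple $\Omega$ is captured by a suitable pair $(R,T)$.

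Next I would extract $R(z)$ and $T(z)$ from the genus-zero data. The classes $\Omega_{0,n}$ assemble into a semisimple Frobenius manifold, and at the chosen semisimple point Givental's theory produces a formal fundamental solution of the associated Dubrovin connection whose asymptotic expansion is an element $R(z)\in L^{(2)}GL(V)$, automatically normalised by $R(z)R(-z)^T=\mathrm{Id}$ and unique up to a finite stabiliser of sign choices; the translation $T(z)\in z^2V[[z]]$ is then fixed by the dilaton-shifted flat coordinate recording the position of $\Omega$ on the manifold. Replacing $\Omega$ by $T^{-1}R^{-1}\Omega$---legitimate since the two actions generate a group preserving $\Omega^{\text{top}}$---I may assume from now on that the genus-zero part of $\Omega$ already coincides with that of $\Omega^{\text{top}}$.

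The heart of the argument is then the reconstruction statement: a semisimple CohFT whose genus-zero truncation equals its topological part must equal its topological part in every genus. Here I would argue by induction on $2g-2+n$, using the splitting axioms \eqref{glue} to pull $\Omega_{g,n}$ back along the boundary maps $\phi_{\text{irr}}$ and $\phi_{h,I}$, where by the inductive hypothesis it is already expressed through $\Omega^{\text{top}}$. The genuinely new information thus lives in the interior cohomology of $\overline{\modm}_{g,n}$ modulo the boundary, and semisimplicity lets one diagonalise the gluing tensor $\Delta$ so that each summand is governed by the one-dimensional theory; the task reduces to showing that no new interior class can be produced.

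I expect this last topological input to be the main obstacle. Controlling the interior contributions requires the Madsen--Weiss theorem (the Mumford conjecture), which identifies the stable cohomology of $\overline{\modm}_{g,n}$ as generated by $\kappa$- and $\psi$-classes, together with vanishing results for the tautological ring in high degree. These are precisely what force the potentially new classes arising at each inductive step to be already accounted for by the edge and leaf weights in \eqref{edgewt}, so that the bootstrap closes and $\Omega=R\cdot T\cdot\Omega^{\text{top}}$. By comparison, the remaining steps---verifying that the extracted $R$ satisfies the twisted-loop-group relation and checking the induction base at $(g,n)=(0,3)$ and $(1,1)$---are routine.
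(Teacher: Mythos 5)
First, a point of calibration: the paper does not prove Theorem~\ref{GivTelThm}. It is imported from \cite{GivGro,TelStr}, and the sentence immediately after the statement says the details are omitted because only certain properties of $R(z)$ are needed. So there is no in-paper argument to compare against, and your proposal has to be judged as a proof of the Givental--Teleman classification itself. As a roadmap of Teleman's strategy it is accurate: diagonalise using semisimplicity, note that $R_0=\mathrm{Id}$ and $T(z)\in z^2V[[z]]$ cannot disturb the degree-zero part, normalise by an $(R,T)$-action, and then prove a rigidity statement by induction over boundary strata with the Mumford conjecture as the topological engine.

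But it is a roadmap, not a proof, and the gap sits exactly where you place it. The entire content of the theorem is the step you defer to ``the main obstacle'': showing that, in each cohomological degree, a deformation whose restriction to every boundary divisor is already accounted for is itself accounted for by a single endomorphism $R_k$ and vector $T_k$ --- no more and no less. You assert that Madsen--Weiss ``forces'' this without carrying out the argument, and the input is not even stated correctly: Madsen--Weiss computes the stable cohomology of the \emph{open} moduli space $\modm_g$ as a polynomial ring on $\kappa$-classes, whereas your induction lives on $\overline{\modm}_{g,n}$, whose cohomology in the same range is much larger (it contains all the boundary classes); Teleman has to combine Harer stability with a genuine analysis of the boundary strata to isolate the classes that vanish on the boundary, and that analysis is the theorem. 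Two subsidiary steps also deserve more care than ``routine.'' Extracting $R(z)$ as the asymptotic fundamental solution of the Dubrovin connection, unique up to finitely many sign choices, is correct only in the conformal (Euler-field) setting; in general the asymptotic solution is unique only up to right multiplication by $\exp(\sum_k a_kz^k)$ with $a_k$ diagonal, which is why Teleman extracts $R$ degree by degree from the full CohFT rather than from genus zero alone. And your intermediate claim --- a semisimple CohFT whose genus-zero part is topological is topological in every genus --- is not a reduction of the problem but a restatement of the rigidity theorem one is trying to prove. None of this affects the paper, which uses the result as a black box, but as a standalone proof the proposal leaves its central lemma unproved.
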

The proofs of Theorem~\ref{GivTelThm} in \cite{GivGro,TelStr} show how to construct the elements $R(z)$ and $T(z)$ out of the CohFT.  We have omitted details here because we will need only some properties of $R(z)$.  In particular the following proposition, which shows how the classes $\Theta_{g,n}$ interact with the group action in Theorem~\ref{GivTelThm}, only needs the existence of $R(z)$.
\begin{proposition}  \label{thetadecomp}
If $\Omega=R\cdot T\cdot\Omega^{\text{top}}$ for $R(z)\in L^{(2)}GL(V)$ and $T(z)\in z^2V[[z]]$ then 
$$\Omega^{\Theta}=R\cdot T_0\cdot\left(\Omega^{\text{top}}\right)^{\Theta}
$$
where $T_0(z)=T(z)/z\in zV[[z]]$.
\end{proposition}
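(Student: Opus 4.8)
My plan is to treat the two generators $R$ and $T$ of the Givental action separately and then compose. Writing the operation of Definition~\ref{cohftheta} as $\Omega\mapsto\Omega^\Theta$, I would prove the two commutation identities
\[
(R\Omega)^\Theta=R(\Omega^\Theta),\qquad (T\Omega)^\Theta=T_0(\Omega^\Theta),
\]
valid for an arbitrary (possibly degenerate) CohFT $\Omega$, where $T_0(z)=T(z)/z$. Granting these and using $\Omega=R\cdot T\cdot\Omega^{\text{top}}$, one obtains $\Omega^\Theta=(R(T\Omega^{\text{top}}))^\Theta=R\big((T\Omega^{\text{top}})^\Theta\big)=R\big(T_0((\Omega^{\text{top}})^\Theta)\big)$, which is the assertion. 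The composition is legitimate precisely because the two identities hold for any CohFT, and $T\Omega^{\text{top}}$ is again a CohFT to which $R$ is applied.

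For the first identity I would use the graphical description of the $R$-action together with the projection formula. Since $\Theta$ is a degenerate CohFT, iterating property~\eqref{cohft} over the edges of a stable graph $\Gamma$ yields the factorisation $p_\Gamma^*\Theta_{g,n}=\prod_{v\in V(\Gamma)}\Theta_{g_v,n_v}$, with $n_v$ the full valence of $v$. Applying the projection formula to each summand $\frac{1}{|\mathrm{Aut}(\Gamma)|}(p_\Gamma)_*\prod w(v)w(\ell)w(e)$ of $(R\Omega)_{g,n}$ and using this factorisation absorbs each $\Theta_{g_v,n_v}$ into the vertex weight $w(v)=\Omega_{g_v,n_v}$, turning it into $\Omega^\Theta_{g_v,n_v}$, while the leaf weights $R^{-1}(\psi)$ and edge weights $\ce(\psi',\psi'')$ are untouched. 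Summing over $\Gamma$ gives exactly $(R(\Omega^\Theta))_{g,n}$.

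For the second identity the key input is a comparison of $\Theta$-classes along the forgetful map $p\colon\overline{\modm}_{g,n+m}\to\overline{\modm}_{g,n}$ appearing in the translation formula~\eqref{transl}. Iterating property~\eqref{forget} one point at a time, and using the standard comparison $\psi_i=\pi^*\psi_i+[D_i]$ together with the vanishing of $\psi$ at a point on a three-pointed rational bubble (so that $\psi_{n+\ell}\cdot[D_{n+j,n+\ell}]=0$), I would establish
\[
\Theta_{g,n+m}=\psi_{n+1}\psi_{n+2}\cdots\psi_{n+m}\cdot p^*\Theta_{g,n}.
\]
Since $T(z)=z\,T_0(z)$ and $\Omega_{g,n+m}$ is multilinear, multiplying the insertion $T_0(\psi_{n+j})$ in the $(n+j)$-th slot by $\psi_{n+j}$ turns it into $T(\psi_{n+j})$. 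Combining this with the pull-back formula and the projection formula identifies, term by term in $m$, the expression $(T_0(\Omega^\Theta))_{g,n}$ with $\Theta_{g,n}\cdot(T\Omega)_{g,n}=(T\Omega)^\Theta_{g,n}$.

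The main obstacle is the pull-back formula for $\Theta_{g,n+m}$: property~\eqref{forget} controls only a single forgetful map, so its iteration generates the correction divisors $[D_{n+j,n+\ell}]$, and one must verify these are killed by the accompanying $\psi$-factor. A secondary point to check is that the $T_0$-translation remains a finite sum even though $T_0(z)\in zV[[z]]$ rather than $z^2V[[z]]$; this is the situation foreseen in the Remark following~\eqref{transl}, and it holds here because $\Theta_{g,n+m}$ has complex degree $2g-2+n+m$, close to $\dim\overline{\modm}_{g,n+m}=3g-3+n+m$, forcing the integrand to vanish once $m$ is large.
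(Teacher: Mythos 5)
Your proposal is correct and follows essentially the same route as the paper: both arguments absorb $\Theta$ into the vertex weights of the Givental graph sum via the projection formula and the factorisation $p_\Gamma^*\Theta_{g,n}=\prod_v\Theta_{g_v,n_v}$, and both use the iterated forgetful-map identity $\Theta_{g,n+m}=\psi_{n+1}\cdots\psi_{n+m}\cdot p^*\Theta_{g,n}$ to trade each dilaton-leaf insertion $T(\psi)$ for $\psi\cdot T_0(\psi)$, which is exactly the shift $T\mapsto T_0$. Your packaging into two commutation identities for the $R$- and $T$-actions, and your explicit check that the correction divisors $[D_{n+j,n+\ell}]$ arising from iterating property~\eqref{forget} are killed by the accompanying $\psi$-factors, are only a more carefully itemised version of the paper's single-pass computation.
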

\begin{proof}
Using $\Omega=R\cdot T\cdot\Omega^{\text{top}}$, express $\Omega_{g,n}$ as a weighted sum over genus $g$ stable graphs with $n$ leaves.
$$\Omega_{g,n}=\sum_{\Gamma\in G_{g,n}}\frac{1}{|{\rm Aut}(\Gamma)|}(p_{\Gamma})_*\hspace{-6mm}\prod_{\begin{array}{c}v\in V(\Gamma)\\\ell\in L^*(\Gamma)\\ e\in E(\Gamma)\end{array}} \hspace{-4mm}w(v) w(\ell)w(e)
$$
where the weights are given by $w(v)=T\cdot \Omega_{g_v,n_v}^{\text{top}}$ at each vertex $v$, $w(\ell)=R^{-1}(\psi_{p(\ell)})$ at each leaf $\ell$ and $w(e)=\ce(\psi_e',\psi_e'')$ at each edge $e$.  The polynomial in two variables $\ce$ is defined from $R$ in \eqref{edgewt}.

Hence $\Omega^{\Theta}_{g,n}=\Theta_{g,n}\cdot\Omega_{g,n}$ is a weighted sum over stable graphs.  Since $\Theta_{g,n}$ is a CohFT we have $p_{\Gamma}^*\Theta_{g,n}=\Theta_{\Gamma}$ hence
$$\Theta_{g,n}\cdot(p_{\Gamma})_*\omega=(p_{\Gamma})_*\Theta_{\Gamma}\cdot\omega.
$$
Consider a vertex $v$ of the stable graph $\Gamma$ of genus $g_v$ and valence $n_v$.  Inside the factor $\overline{\modm}_{g_v,n_v}$, the contribution to the sum giving $\Omega^{\Theta}_{g,n}$ is given by a product of $\psi$ classes, determined by edge and leaf weights, times $\Theta_{g_v,n_v}$ (which is part of $\Theta_{\Gamma}$) times $T\cdot\Omega_{g_v,n_v}^{\text{top}}$.    We claim that
$$ \Theta_{g_v,n_v}\cdot T\cdot\Omega_{g_v,n_v}^{\text{top}}=T_0\cdot \Theta_{g_v,n_v}\cdot\Omega_{g_v,n_v}^{\text{top}}.
$$
The translation $T\cdot\Omega_{g_v,n_v}^{\text{top}}$ is realised by inserting the vector coefficients of $T$ into dilaton leaves labeled $n_v+1,...,n_v+m$ and pushing forward powers $\psi_{n_v+i}^{m}$ to $\overline{\modm}_{g_v,n_v}$ via the forgetful map.  The translation $T_0\cdot \Theta_{g_v,n_v}\cdot\Omega_{g_v,n_v}^{\text{top}}$ pushes forward powers $\psi_{n_v+i}^m$ next to $\Theta_{g_v,n_v+m}=\psi_{n_v+1}...\psi_{n_v+m}\pi^*\Theta_{g_v,n_v}$.  The extra factor of $\psi_{n_v+i}$ means the power $\psi_{n_v+i}^m$ becomes $\psi_{n_v+i}^{m+1}$.  This results in a shift by one in the translation producing $T_0(z)=T(z)/z\in zV[[z]]$.  Since $\Theta_{g_v,n_v}\cdot\Omega_{g_v,n_v}^{\text{top}}=(\Omega^{\text{top}})_{g_v,n_v}^{\Theta}$, at each vertex we place $\Theta_{g_v,n_v}$ rescaled by a topological field theory, and the result follows.
\end{proof}
An efficient way to store the decomposition of a semisimple CohFT $\Omega$ given in Theorem~\ref{GivTelThm}, together with the data of $R$ and $T$, is via topological recursion which we describe next.  We will see that Proposition~\ref{thetadecomp} allows us to produce associated topological recursion data that stores the decomposition of $\Omega^{\Theta}$.

\subsection{Topological recursion}   \label{sec:TR}

Topological recursion, as developed by Chekhov, Eynard, Orantin \cite{CEyHer,EOrInv}, is a procedure which takes as input a spectral curve, defined below, and produces a collection of symmetric tensor products of meromorphic 1-forms $\omega_{g,n}$ on $C^n$ which we refer to as correlators.   The correlators store enumerative information in different ways.  Periods of the correlators store top intersection numbers of tautological classes in the moduli space of stable curves $\overline{\modm}_{g,n}$ and local expansions of the correlators can serve as generating functions for enumerative problems.  

A fundamental ingredient in topological recursion is a bidifferential $B(p,p')$ defined on $C\times C$ as follows.
\begin{definition} \label{bidiff} On any compact Riemann surface $C$ with a choice of $\cal A$-cycles $(\Sigma,\{ {\cal A}_i\}_{i=1,...,g})$, define a {\em fundamental normalised bidifferential of the second kind}  $B(p,p')$ to be a symmetric tensor product of differentials on $C\times C$, uniquely defined by the properties that it has a double pole on the diagonal of zero residue, double residue equal to $1$, no further singularities and normalised by $\int_{p\in{\cal A}_i}B(p,p')=0$, $i=1,...,g$, \cite{FayThe}.  On a rational curve, which is sufficient for this paper, $B$ is the Cauchy kernel
$$
B(z_1,z_2)=\frac{dz_1dz_2}{(z_1-z_2)^2}.$$  
\end{definition}
The bidifferential $B(p,p')$ acts as a kernel for producing meromorphic differentials on the Riemann surface $C$ via $\omega(p)=\int_{\Lambda}\lambda(p')B(p,p')$ where $\lambda$ is a function defined along the contour $\Lambda\subset C$.  
Topological recursion produces tensor products of differentials built from $B(p,p')$ such as the normalised (trivial $\cal A$-periods) differentials of the second kind, holomorphic outside the zeros of $dx$ and with a double pole at a simple zero of $dx$ used in Definition~\ref{evaluationform}.

A spectral curve $S=(C,x,y,B)$ is a Riemann surface $C$ equipped with two meromorphic functions $x, y: C\to \mathbb{C}$ and a bidifferential $B(p_1,p_2)$ defined in Definition~\ref{bidiff}, which is the Cauchy kernel in this paper.   Topological recursion is a procedure that produces from a spectral curve $S=(C,x,y,B)$ a symmetric tensor product of meromorphic 1-forms $\omega_{g,n}$ on $C^n$ for integers $g\geq 0$ and $n\geq 1$, which we refer to as {\em correlation differentials} or {\em correlators}.  The correlation differentials $\omega_{g,n}$ are defined by
\[
\omega_{0,1}(p_1) = -y(p_1) \, d x(p_1) \qquad \text{and} \qquad \omega_{0,2}(p_1, p_2) = B(p_1,p_2) 
\]
and for $2g-2+n>0$ they are defined recursively via the following equation.
\[
\omega_{g,n}(p_1, p_L) = \sum_{d x(\alpha) = 0} \mathop{\text{Res}}_{p=\alpha} K(p_1, p) \Bigg[ \omega_{g-1,n+1}(p, \hat{p}, p_L) + \mathop{\sum_{g_1+g_2=g}}_{I \sqcup J = L}^\circ \omega_{g_1,|I|+1}(p, p_I) \, \omega_{g_2,|J|+1}(\hat{p}, p_J) \Bigg]
\]
Here, we use the notation $L = \{2, 3, \ldots, n\}$ and $p_I = \{p_{i_1}, p_{i_2}, \ldots, p_{i_k}\}$ for $I = \{i_1, i_2, \ldots, i_k\}$. The outer summation is over the zeroes $\alpha$ of $dx$ and $p \mapsto \hat{p}$ is the involution defined locally near $\alpha$ satisfying $x(\hat{p}) = x(p)$ and $\hat{p} \neq p$. The symbol $\circ$ over the inner summation means that we exclude any term that involves $\omega_{0,1}$. Finally, the recursion kernel is given by
\[
K(p_1,p) = \frac{1}{2}\frac{\int_{\hat{p}}^p \omega_{0,2}(p_1, \,\cdot\,)}{[y(p)-y(\hat{p})] \, d x(p)}.
\]
which is well-defined in the vicinity of each zero of $dx$.   It acts on differentials in $p$ and produces differentials in $p_1$ since the quotient of a differential in $p$ by the differential $dx(p)$ is a meromorphic function.  For $2g-2+n>0$, each $\omega_{g,n}$ is a symmetric tensor product of meromorphic 1-forms on $C^n$ with residueless poles at the zeros of $dx$ and holomorphic elsewhere.  A zero $\alpha$ of $dx$ is {\em regular}, respectively irregular, if $y$ is regular, respectively has a simple pole, at $\alpha$.  The order of the pole in each variable of $\omega_{g,n}$ at a regular, respectively irregular, zero of $dx$ is $6g-4+2n$, respectively $2g$, \cite{DNoTop}.  Define $\Phi(p)$ up to an additive constant by $d\Phi(p)=y(p)dx(p)$.  For $2g-2+n>0$, the invariants satisfy the dilaton equation~\cite{EOrInv}
\[
\sum_{\alpha}\Res_{p=\alpha}\Phi(p)\, \omega_{g,n+1}(p,p_1, \ldots ,p_n)=(2g-2+n) \,\omega_{g,n}(p_1, \ldots, p_n),
\] 
where the sum is over the zeros $\alpha$ of $dx$.  This enables the definition of the so-called {\em symplectic invariants}
\[ F_g=\sum_{\alpha}\Res_{p=\alpha}\Phi(p)\omega_{g,1}(p).\]

\begin{remark}
A generalisation of topological recursion known as {\em local topological recursion} allows the Riemann surface $C$ to be non-compact and disconnected.  In this case the bidifferential $B(p_1,p_2)$ is simply a prescribed symmetric tensor product of differentials on $C\times C$, uniquely defined by the properties that it has a double pole on the diagonal of zero residue, double residue equal to $1$ and no further singularities.  This generalisation arises out of the fact that topological recursion on a spectral curve $(C,x,y,B)$ where $C$ is a compact Riemann surface can be formulated by restriction of $(x,y,B)$ to the non-compact Riemann surface $\tilde{C}$ given by the disjoint union of small open neighbourhoods of the zeros of $dx$.
\end{remark}

\begin{definition}\label{evaluationform}
For a Riemann surface equipped with a meromorphic function $(\Sigma,x)$ we define evaluation of any meromorphic differential $\omega$ at a simple zero $\cp$ of $dx$ by
$$
\omega(\cp)^2:=\Res_{p=\cp}\frac{\omega(p)\omega(p)}{dx}.
$$
This defines $\omega(\cp)\in\bc$ up to a $\pm1$ ambiguity which can be removed by making a choice of square root.
\end{definition}
The correlators $\omega_{g,n}$ are normalised differentials of the second kind in each variable---they have zero $\ca$-periods, poles of zero residue at the zeros $\alpha$ of $dx$, and holomorphic elsewhere.  Their principal parts are skew-invariant under the local involution $p\mapsto\hat{p}$.  A basis of such normalised differentials of the second kind is constructed from $x$ and $B$ in the following definition. 
\begin{definition}\label{auxdif}
For a Riemann surface $C$ equipped with a meromorphic function $x:C\to\bc$ and bidifferential $B(p_1,p_2)$ define the auxiliary differentials on $C$ as follows.  For each zero $\alpha_i$ of $dx$,  define
\begin{equation}  \label{Vdiff}
U^i_0(p)=B(\alpha_i,p),\quad U^i_{k+1}(p)=d\left(\frac{U^i_k(p)}{dx(p)}\right),\ i=1,...,D,\quad k=0,1,2,...
\end{equation}
where evaluation $B(\alpha_i,p)$ at  $\alpha_i$ is given in Definition~\ref{evaluationform}.
\end{definition}
The correlators $\omega_{g,n}$ are polynomials in the auxiliary differentials $U^i_k(p)$.  To any spectral curve $S$, one can define a partition function $Z^S$ by assembling the polynomials built out of the correlators $\omega_{g,n}$ \cite{DOSSIde,EynInv}.
\begin{definition}  \label{TRpart}
$$Z^S(\h,\{u^{\alpha}_k\}):=\left.\exp\sum_{g,n}\frac{\h^{2g-2}}{n!}\omega^S_{g,n}\right|_{U^{\alpha}_k(p_i)=u^{\alpha}_k}.
$$
\end{definition}

\subsubsection{Topological recursion and the Givental action.}  
It was proven in \cite{DOSSIde} that for any semisimple CohFT with flat unit $\Omega$, there exists a (local) spectral curve $S$ which produces the partition function $Z_\Omega$ via $Z^S(\hbar,\{u^\alpha_k\})=Z_\Omega(\hbar,\{u^\alpha_k\})$.  As we saw in Theorem~\ref{GivTelThm}, a semisimple CohFT with flat unit $\Omega$ is equivalent to three pieces of information: $R(z)\in L^{(2)}GL(V,\bc)$, $T(z)\in z^2V[[z]]$ and $\eta_1,...,\eta_{\dim V}\in\bc$ which encodes a topological field theory via $\eta(u_i,u_j)=\delta_{ij}\eta_i$ for $\{u_i\}$ a canonical basis of $V$.  We will describe below how $R(z)$ is obtained from $x(p)$ and the bidifferential $B(p_1,p_2)$, and $T(z)$ and the topological field theory are obtained from $x(p)$ and $y(p)$.  The role of the function $x(p)$ is to produce local coordinates with respect to which local expansions of $B(p_1,p_2)$ and $y(p)$ give $R(z)$ and $T(z)$.

An element of the twisted loop group $R(z)\in L^{(2)}GL(D,\bc)$ can be naturally defined from a Riemann surface $\Sigma$ equipped with a bidifferential $B(p_1,p_2)$ on $\Sigma\times\Sigma$ and a meromorphic function $x:\Sigma\to\bc$, where $D$ is the number of zeros of $dx$.  Define
\begin{equation}   \label{RmatB}
\left[R_B^{-1}(z)\right]^i_j = -\frac{\sqrt{z}}{\sqrt{2\pi}}\int_{\Gamma_j} B(\alpha_i,p)\cdot e^{\frac{(x(\alpha_j)-x(p))}{ z}}
\end{equation}
where $\Gamma_j$ is a path of steepest descent of $-x(p)/z$ containing $\alpha_j$.  More precisely, $[R^{-1}(z)]^i_j$ is an asymptotic expansion of the integral as $z\to 0$, which depends only on a neighbourhood of $p=\alpha_j$.  The proof that $R(z)R^T(-z)=I$ can  be found in \cite{DNOPSPri,EynInv}.   A basic example is the function $x=z^2$ on $\Sigma=\bc$ which gives rise to the constant element $R(z)=1\in GL(1,\bc)$.  More generally, since any function $x$ looks like this example locally $R(z)=I+R_1z+...\in L^{(2)}GL(D,\bc)$ is a deformation of $I\in GL(D,\bc)$.

The local expansion of a regular function $y(p)$ at $p=\alpha_i$ gives the translation $T^y$ and semisimple TFT $\Omega^{\text{top}}_y$ defined by
\begin{equation}   \label{transy}
T^y(z)=-\sum_{k=2}^{\infty} (2k-1)!!\left\{\frac{y^{\alpha}_{2k-1}}{y^{\alpha}_{1}}\right\}z^{k}\in z^2V[[z]]\qquad \Omega^{\text{top},y}_{0,1}=\{y_{1,\alpha}\}\in V^*
\end{equation}
where $y_{2k-1,\alpha}$ are the odd coefficients of the local expansion $y=\sum_{j\geq 0} y_{j,\alpha}s^j$ with respect to the local coordinate $s$ defined by $x=\frac12 s^2+x(\alpha_i)$.  Here we have encoded a semisimple TFT by $\Omega^{\text{top}}_{0,1}\in V^*$ as described in Section~\ref{sec:cohft}.

\begin{theorem}[\cite{DOSSIde}]  \label{DOSS}
For any semisimple CohFT with flat unit $\Omega=R\cdot T\cdot\Omega^{\text{top}}$, there exists a local spectral curve $S=(\Sigma,B,x,y)$ with $R(z)=R_B(z)$ defined in \eqref{RmatB}, $T(z)=T^y(z)$ and $\Omega^{\text{top}}=\Omega^{\text{top},y}$ defined in \eqref{transy}, such that
$$Z^S(\hbar,\{u^\alpha_k\})=Z_\Omega(\hbar,\{u^\alpha_k\}).$$
\end{theorem}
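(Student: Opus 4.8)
The plan is to prove the identity $Z^S = Z_\Omega$ by showing that both partition functions are generating series for the \emph{same} sum over stable graphs, with matching weights attached to vertices, edges and leaves. On the Givental side this structure is already available: by Theorem~\ref{GivTelThm} the semisimple CohFT decomposes as $\Omega = R\cdot T\cdot\Omega^{\text{top}}$, so $Z_\Omega$ is the graphical sum over $\Gamma\in G_{g,n}$ with the weights (i)--(iv) of Section~\ref{Giv}: vertices carry $\Omega^{\text{top}}_{g_v,n_v}$ (a rescaled trivial CohFT, hence Kontsevich--Witten $\psi$-integrals), ordinary leaves carry $R^{-1}(\psi)$, dilaton leaves carry $T(\psi)$, and edges carry $\ce(\psi_e',\psi_e'')$ from \eqref{edgewt}. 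The first task is therefore to produce the analogous graph decomposition of the topological recursion correlators $\om_{g,n}$.

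First I would unfold the topological recursion. Expanding the residues at the zeros of $dx$ and iterating the recursion produces a closed-form expansion of $\om_{g,n}$ as a sum over the same combinatorial set of stable graphs. In this expansion each vertex of type $(g_v,n_v)$ contributes an intersection number $\int_{\overline{\modm}_{g_v,n_v}}\psi_1^{k_1}\cdots\psi_{n_v}^{k_{n_v}}$ weighted by the local data of the spectral curve at a zero $\alpha_i$ of $dx$; each internal edge contributes a propagator built from the principal part of $B$ at the pair of zeros it joins; and each external leg contributes one of the auxiliary differentials $U^i_k$ of Definition~\ref{auxdif}. The local coordinate $s$ defined by $x=\tfrac12 s^2+x(\alpha_i)$ near each zero is what organises these weights as power series in $s$, equivalently in the variable $z$ appearing in $R$ and $T$.

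The heart of the argument is the matching of the two sets of weights, carried out through the Laplace transform along the steepest-descent paths $\Gamma_j$. The three pieces of data are identified as follows. For the \emph{edge and leaf weights}, I would show that the Laplace transform $\int_{\Gamma_j}B(\alpha_i,p)\,e^{(x(\alpha_j)-x(p))/z}$ reproduces exactly $[R_B^{-1}(z)]^i_j$ of \eqref{RmatB}, and hence that the $B$-propagator on an edge equals the edge weight $\ce(\psi_e',\psi_e'')$ generated by $R_B$ through \eqref{edgewt}; the standard fact that the Laplace transform sends $\psi^k$ to a $(2k-1)!!\,z^k$-type factor converts the $\psi$-insertions at the endpoints of a leg into the series $R^{-1}(\psi)$. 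For the \emph{translation and topological field theory}, I would Laplace transform $\om_{0,1}=-y\,dx$ and read off the odd coefficients of the local expansion of $y$, matching the dilaton-leaf weight and vertex normalisation with $T^y(z)$ and $\Omega^{\text{top},y}_{0,1}$ in \eqref{transy}. With these dictionaries in place, the two graph sums agree term by term, giving $Z^S(\h,\{u^\alpha_k\})=Z_\Omega(\h,\{u^\alpha_k\})$.

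The main obstacle is this Laplace-transform dictionary: proving that the element $R_B(z)$ extracted from $B$ via \eqref{RmatB} genuinely lies in the twisted loop group, i.e. satisfies $R_B(z)R_B(-z)^T=Id$, and that it is precisely the $R$ of the Givental decomposition, so that its induced edge weight $\ce$ coincides with the $B$-propagator. This requires a careful asymptotic analysis of the steepest-descent integrals near each zero of $dx$ together with the bilinear identity for $B$ that forces the symplectic condition; once it is established, the remaining identification of $T$ and of the vertex contributions, and the final comparison of the two graph expansions, is essentially bookkeeping.
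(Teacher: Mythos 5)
The paper does not prove this theorem: it is stated with the attribution \cite{DOSSIde} and used as a black box, so there is no internal proof to compare yours against. Measured against the argument actually given in that reference, your outline is essentially the correct one: the correlators $\omega_{g,n}$ are unfolded into a sum over the same stable graphs that organise the Givental action (this intersection-number expansion of topological recursion goes back to Eynard), and the weights are matched through the Laplace transform at the zeros of $dx$ --- the transform of $B$ producing $R_B$ as in \eqref{RmatB} and hence the edge weight $\ce$ of \eqref{edgewt}, the odd local expansion of $y$ producing $T^y$ and the topological field theory as in \eqref{transy}, and Kontsevich--Witten intersection numbers sitting at the vertices. You have also correctly isolated the genuinely technical point, namely that $R_B(z)R_B(-z)^T=Id$ and that the edge weight induced by $R_B$ reproduces the $B$-propagator.

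Two caveats for a complete write-up. First, the theorem asserts \emph{existence} of the local spectral curve: one must also construct $(\Sigma,B,x,y)$ realising the given $(R,T,\Omega^{\text{top}})$, which for a local curve amounts to inverting the Laplace-transform dictionary to define $B$ and $y$ near each branch point; your argument implicitly assumes $S$ is already in hand. Second, you never invoke the flat unit hypothesis, yet it is not decorative: as the remark following the theorem observes, flatness of the unit forces $y$, hence $T^y$, to be determined by $R$ and the topological field theory, and it is what guarantees the compatibility between the translation of the Givental decomposition and the one read off from $y$ (including the correct treatment of the unstable $(0,1)$ and $(0,2)$ terms). Neither omission undermines the strategy, but both would need to be filled in.
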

\begin{remark}
The flat unit condition is rather strong and in particular it means that the function $y(p)$, hence $T^y(z)$, is uniquely determined from the $D$-tuple $\Omega^{\text{top}}_y=\{y_{1,\alpha}\}$ and $R(z)$.   Dubrovin associated to any semisimple CohFT with flat unit a family of spectral curves known as a superpotential \cite{DubPai}.   In \cite{DNOPSDub} it is proven that the superpotential can be used to produce a compact spectral curve out of Theorem~\ref{DOSS}.
\end{remark}
Theorem~\ref{DOSS} can be generalised to allow for CohFTs without flat unit and degenerate CohFTs which correspond to irregular spectral curves.  The translation associated to an irregular point is 
\begin{equation}   \label{transyi}
T_0^y(z)=-\sum_{k=1}^{\infty} (2k-1)!!\left\{\frac{y^{\alpha}_{2k-1}}{y^{\alpha}_{-1}}\right\}z^{k}\in zV[[z]]\qquad \Omega^{\text{top},y}_{0,1}=\{y_{-1,\alpha}\}\in V^*
\end{equation}
where $y_{2k-1,\alpha}$ are the odd coefficients of the local expansion $\displaystyle y=\sum_{j\geq -1} y_{j,\alpha}s^j$ with respect to the local coordinate \vspace{-4mm}\\ 
$s$ defined by $x=\frac12 s^2+x(\alpha_i)$.

\begin{theorem}[\cite{CNoTop}]  \label{CNo}
For any degenerate CohFT $\Omega=R\cdot T_0\cdot(\Omega^{\text{top}})^{\Theta}$, there exists a local spectral curve $S=(\Sigma,B,x,y)$ with $R(z)=R_B(z)$ defined in \eqref{RmatB}, $T_0(z)=T^y(z)$ and $\Omega^{\text{top}}=\Omega^{\text{top},y}$ defined in \eqref{transyi}, such that
$$Z^S(\hbar,\{u^\alpha_k\})=Z_\Omega(\hbar,\{u^\alpha_k\}).$$
\end{theorem}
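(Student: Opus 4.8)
The plan is to mirror the structure of the proof of Theorem~\ref{DOSS}, replacing the regular local model by its irregular counterpart. The essential new input is a base case: first I would show that topological recursion applied to the standard irregular local spectral curve, with $x=\frac12 s^2$ and $y=s^{-1}$ on a neighbourhood of a single zero of $dx$, produces a partition function equal to the Br\'ezin--Gross--Witten tau function $Z^{\text{BGW}}$, equivalently the generating function of the intersection numbers $\int_{\overline{\modm}_{g,n}}\Theta_{g,n}\cdot\prod\psi_j^{k_j}$ from Theorem~\ref{th:theta}. This is the irregular analogue of the Airy-curve/Kontsevich--Witten correspondence underlying Theorem~\ref{DOSS}, and it identifies the single-vertex building block at an irregular zero with the degenerate CohFT $\Theta_{g,n}=(\Omega^{\text{top}})^{\Theta}$ attached to the trivial topological field theory (here $y_{-1}=1$ and all other coefficients vanish, so that \eqref{transyi} gives $T_0=0$ and $\Omega^{\text{top},y}_{0,1}=1$).

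Second, I would invoke the stable-graph expansion of topological recursion correlators established in \cite{EOrInv,DOSSIde,EynInv}. For a spectral curve all of whose zeros of $dx$ are irregular, each correlator $\omega_{g,n}$ decomposes as a sum over stable graphs in $G_{g,n}$ in which each vertex of type $(g_v,n_v)$ contributes the local $\Theta$-block of the first step, each edge contributes the bidifferential data that by \eqref{RmatB} assembles into $R_B(z)\in L^{(2)}GL(D,\bc)$, and the local expansion of $y$ inserts dilaton leaves whose weights are read off from its odd Taylor coefficients. The key point distinguishing the irregular from the regular case is that, because $y$ carries a simple pole, its expansion in the coordinate $s$ with $x=\frac12 s^2+x(\alpha)$ begins at $s^{-1}$; the resulting translation \eqref{transyi} starts at $k=1$ and therefore lies in $zV[[z]]$ rather than $z^2V[[z]]$. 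This is precisely the shift from $T$ to $T_0$ predicted by Proposition~\ref{thetadecomp}.

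Third, I would match the two expansions term by term. The weights produced by topological recursion---vertex weight $(\Omega^{\text{top}})^{\Theta}_{g_v,n_v}$, leaf weight $R^{-1}(\psi)$, edge weight $\ce(\psi',\psi'')$ built from $R$ via \eqref{edgewt}, and dilaton-leaf weight $T_0(\psi)$---coincide by construction with the weights defining the Givental action on the degenerate CohFT in Section~\ref{Giv}. Hence the graph sum computing $\omega_{g,n}$ equals $(R\cdot T_0\cdot(\Omega^{\text{top}})^{\Theta})_{g,n}$ after evaluation of the auxiliary differentials $U^\alpha_k$ of Definition~\ref{auxdif}, and assembling over $(g,n)$ via Definition~\ref{TRpart} yields $Z^S=Z_\Omega$.

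The main obstacle is the base case. Establishing that the irregular local model reproduces $Z^{\text{BGW}}$ requires analysing topological recursion at a zero of $dx$ where $y$ has a simple pole: the recursion kernel $K(p_1,p)$ and the pole structure of the correlators differ from the regular case (the pole order in each variable drops from $6g-4+2n$ to $2g$), and one must verify that the resulting numbers agree with the $\Theta$-intersection numbers, for instance by checking that they satisfy the defining properties \eqref{cohft}--\eqref{base} of $\Theta_{g,n}$ together with the associated KdV/BGW string and dilaton relations. Once this identification is secured, the remainder is a faithful transcription of the argument for Theorem~\ref{DOSS}, with the $\Theta$-twisted building block and the degree-shifted translation $T_0$ in place of their regular counterparts.
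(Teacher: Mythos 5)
The paper does not prove this statement---it is imported from \cite{CNoTop} as a cited theorem---so there is no internal proof to compare against. Your outline (an irregular local model $x=\frac12 s^2$, $y=s^{-1}$ whose topological recursion reproduces $Z^{\text{BGW}}$, hence the $\Theta_{g,n}$-intersection numbers via Theorem~\ref{th:theta}, followed by the stable-graph expansion of the correlators and a term-by-term match with the Givental weights $R^{-1}(\psi)$, $\ce(\psi',\psi'')$, $T_0(\psi)$) is exactly the strategy of the cited reference together with \cite{DNoTop}, and the base case you correctly single out as the main obstacle is indeed where the substantive work lies.
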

\begin{remark}
The expression $R\cdot T_0\cdot(\Omega^{\text{top}})^{\Theta}$ gives a weighted sum over graphs, as described in Section~\ref{Giv}, where a vertex of type $(g',n')$ is weighted by the cohomology class $\Theta_{g',n'}$ and the TFT, whereas the expression $R\cdot T\cdot\Omega^{\text{top}}$ weights each vertex by the trivial cohomology class $1$ and the TFT.  One can of course mix the two---assign $\Theta_{g',n'}$ to some vertices and $1$ to others.  Indeed, a more general version of Theorem~\ref{CNo} is proven in \cite{CNoTop} which allows mixed vertex contributions.
\end{remark}
 

 
\section{Gromov-Witten invariants of $\bp^1$}  \label{proofs}
From now on we consider Gromov-Witten invariants of $\bp^1$.  The main outcome of this section is the calculation of the $\Theta$-Gromov-Witten invariants of $\bp^1$ from topological recursion applied to the spectral curve
$$S^\Theta_{\bp^1}=(C,x,y,B)=\left(\bc,\quad x=z+\frac{1}{z},\quad y=\frac{z}{z^2-1},\quad B=\frac{dzdz'}{(z-z')^2}\right).$$
We first state the result for stationary $\Theta$-Gromov-Witten invariants which consist of insertions of $\omega\in H^2(\bp^1)$.  Consider
\begin{equation}  \label{Dgn} 
D^{\Theta}_{g,n}(x_1,...,x_n)=\sum_{\bf b}\left\langle \Theta\cdot\prod_{i=1}^n \tau_{b_i}(\omega) \right\rangle^g_d\cdot\prod_{i=1}^n(b_i+1)!x_i^{-b_i-2}dx_i.
\end{equation}
\begin{theorem}  \label{TRGWP1th}
For $2g-2+n>0$, $D^{\Theta}_{g,n}(x_1,...,x_n)$ is an analytic expansion at $\{x_i=x(z_i)=\infty\}$ of the correlators $\omega_{g,n}$ of topological recursion applied to the spectral curve $S^\Theta_{\bp^1}$.
\end{theorem}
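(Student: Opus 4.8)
The plan is to produce $S^\Theta_{\bp^1}$ by applying the $\Theta$-twist to the Givental--Teleman reconstruction of the semisimple Gromov--Witten CohFT $\Omega_{\bp^1}$ of $\bp^1$, and then to read off the stationary descendant invariants from the expansion of the resulting correlators near $x=\infty$. The quantum cohomology of $\bp^1$ is semisimple, so Theorem~\ref{GivTelThm} gives $\Omega_{\bp^1}=R\cdot T\cdot\Omega^{\text{top}}$ for some $R(z)\in L^{(2)}GL(V)$ and $T(z)\in z^2V[[z]]$. The stationary Gromov--Witten invariants of $\bp^1$ are known to be governed by topological recursion on the curve $x=z+1/z$, $y_{GW}=\log z$; equivalently, by Theorem~\ref{DOSS} this curve realises the data $R=R_B$, $T=T^{y_{GW}}$ and $\Omega^{\text{top}}=\Omega^{\text{top},y_{GW}}$ of $\Omega_{\bp^1}$. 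I take this as the input, noting that because $C=\bc$ is rational its $\mathcal{A}$-periods are trivial, so local and global topological recursion with the Cauchy kernel $B$ coincide and no global/local discrepancy arises.

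First I apply Proposition~\ref{thetadecomp} to obtain $\Omega_{\bp^1}^\Theta=R\cdot T_0\cdot(\Omega^{\text{top}})^\Theta$ with $T_0(z)=T(z)/z$, and I observe that the $R$-matrix is unchanged. Since $R=R_B$ in \eqref{RmatB} depends only on $x$ and $B$, which are identical for $S^\Theta_{\bp^1}$, the $R$-matrix of the $\Theta$-theory is automatically that of $S^\Theta_{\bp^1}$; only the function $y$ must be matched. Using the local coordinate $s$ at a zero $\alpha$ of $dx$ with $x=\tfrac12 s^2+x(\alpha)$, so that $dx=s\,ds$, differentiation $y\mapsto dy/dx=s^{-1}\,dy/ds$ turns $y_{GW}=\sum_{j\geq 0}y^{GW}_j s^j$ into $y_\Theta=\sum_{m\geq -1}(m+2)\,y^{GW}_{m+2}\,s^m$; in particular $y^\Theta_{-1}=y^{GW}_1$ and $y^\Theta_{2k-1}=(2k+1)\,y^{GW}_{2k+1}$. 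Feeding this into the irregular formula \eqref{transyi} and using $(2k-1)!!\,(2k+1)=(2k+1)!!$ gives exactly $T_0=T^{y_\Theta}$, while the encoded topological field theory is preserved because $\Omega^{\text{top}}_{0,1}=\{y^{GW}_{1,\alpha}\}=\{y^\Theta_{-1,\alpha}\}$ in passing from \eqref{transy} to \eqref{transyi}. Here $y_\Theta=dy_{GW}/dx=\tfrac{d\log z}{dx}=\tfrac{z}{z^2-1}$, which is precisely the $y$ of $S^\Theta_{\bp^1}$. Theorem~\ref{CNo} now yields $Z^{S^\Theta_{\bp^1}}=Z_{\Omega_{\bp^1}^\Theta}$.

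The partition function $Z_{\Omega_{\bp^1}^\Theta}$ records \emph{ancestor} $\Theta$-invariants, whereas \eqref{Dgn} records \emph{descendant} ones, so the final step is to pass between them. I use Proposition~\ref{th:ancdest}, which shows that the ancestor--descendant change of variables for the $\Theta$-theory is the same $S$-matrix \eqref{SKM} as for usual Gromov--Witten invariants. In the topological recursion dictionary this change of variables is precisely the transition between the expansion of $\omega_{g,n}$ in the auxiliary differentials at the zeros $z=\pm1$ of $dx$ (the ancestor side) and the Taylor expansion in $x^{-1}$ at the point $z=0$ over $x=\infty$ (the descendant side): the entries of the $S$-matrix are the expansions at $x=\infty$ of the differentials generating the ancestor correlators. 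Since $\omega_{g,n}$ has poles only at $z=\pm1$, it is holomorphic at $z=0$, where $x=z+1/z\to\infty$, so the expansion in $x^{-1}$ is legitimate; extracting the stationary part (only $\omega$-insertions) and matching powers reproduces the generating series $D^\Theta_{g,n}$ of \eqref{Dgn}, equivalently the residue formula $\langle\Theta\cdot\prod_i\tau_{k_i}(\omega)\rangle^g_d=\Res_{z_1=0}\cdots\Res_{z_n=0}\prod_i\tfrac{x_i^{k_i+1}}{(k_i+1)!}\,\omega_{g,n}$.

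The step I expect to be the main obstacle is the bookkeeping of the second paragraph: verifying that the single differentiation $y_{GW}\mapsto dy_{GW}/dx$ implements the shift $T\mapsto T/z$ exactly, including the change from the regular expansion \eqref{transy} (summed from $k=2$ and normalised by $y_1$) to the irregular expansion \eqref{transyi} (summed from $k=1$ and normalised by $y_{-1}$), together with the preservation of the $\Omega^{\text{top}}$ data. This is the hinge that connects the purely cohomological Proposition~\ref{thetadecomp} to the analytic input of Theorems~\ref{DOSS} and~\ref{CNo}; once it is secured the spectral curve is forced to be $S^\Theta_{\bp^1}$, and the stationary descendant statement follows from the standard ancestor--descendant dictionary via the $S$-matrix.
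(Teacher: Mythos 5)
Your proposal is correct and follows essentially the same route as the paper: Proposition~\ref{thetadecomp} reduces the problem to replacing $T$ by $T_0=T/z$, which is realised on the spectral curve by $y\mapsto dy/dx$ (sending $\log z$ to $\tfrac{z}{z^2-1}$), Theorem~\ref{CNo} then identifies the correlators with the ancestor $\Theta$-invariants, and Proposition~\ref{th:ancdest} together with the residue/expansion dictionary at $x=\infty$ converts these to the stationary descendant series $D^{\Theta}_{g,n}$. Your explicit local bookkeeping of the odd coefficients under $d/dx$ and the identity $(2k-1)!!\,(2k+1)=(2k+1)!!$ is exactly the verification the paper carries out in the proof of Theorem~\ref{TRGW1}.
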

In the two exceptional cases $(g,n)=(0,1)$ and $(0,2)$, the invariants $\omega_{g,n}$ are not analytic at $x_i=\infty$.  We get analytic expansions around a branch of $\{x_i=\infty\}$ by removing their singularities at $x_i=\infty$ as follows:  
\begin{equation}  \label{excep} 
\omega_{0,1}+\frac{dx_1}{x_1}\sim D^{\Theta}_{0,1}(x_1),\quad\omega_{0,2}-\displaystyle\frac{dx_1dx_2}{(x_1-x_2)^2}\sim D^{\Theta}_{0,2}(x_1,x_2).
\end{equation}
The usual Gromov-Witten invariants of $\bp^1$ satisfy a similar result to Theorem~\ref{TRGWP1th}.  Analogous to the series $D^{\Theta}_{g,n}(x_1,...,x_n)$, define
\[D_{g,n}(x_1,...,x_n)=\sum_{\bf b}\left\langle \prod_{i=1}^n \tau_{b_i}(\omega) \right\rangle^g_d\cdot\prod_{i=1}^n(b_i+1)!x_i^{-b_i-2}dx_i\]
where $d$ is determined by $\sum_{i=1}^n b_i=2g-2+2d$.
\begin{theorem}[\cite{DOSSIde,NScGro}]  \label{TRGWP1}
For $2g-2+n>0$, $D_{g,n}(x_1,...,x_n)$ is an analytic expansion at $\{x_i=\infty\}$ of the correlators $\omega_{g,n}$ of topological recursion applied to the spectral curve 
$$S_{\bp^1}=(C,x,y,B)=\left(\bc,\quad x=z+\frac{1}{z},\quad y=\log{z},\quad B=\frac{dzdz'}{(z-z')^2}\right).$$
\end{theorem}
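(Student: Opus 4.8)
The plan is to realise the descendant stationary Gromov-Witten invariants of $\bp^1$ as topological recursion invariants in three stages: first recognise the Gromov-Witten theory of $\bp^1$ as a semisimple CohFT with flat unit and decompose it by the Givental-Teleman classification; then match the resulting $R$, $T$ and topological field theory data against the local expansions of $S_{\bp^1}$ using Theorem~\ref{DOSS}, which produces the \emph{ancestor} invariants; and finally pass from ancestors to descendants via the $S$-matrix change of variables \eqref{ancdes}, which I claim is implemented by re-expanding the correlators at $x=\infty$.

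First I would record that the CohFT $\Omega_{\bp^1}$ of Definition~\ref{cohftGW} is semisimple with flat unit. Setting $q=1$, the quantum cohomology ring is $\bc[\omega]/(\omega^2-1)$, so the quantum product has idempotents $\tfrac12(1\pm\omega)$ and the associated canonical coordinates---the critical values of the mirror superpotential $z+z^{-1}$---are $\pm 2$. By Theorem~\ref{GivTelThm} there are $R(z)\in L^{(2)}GL(V)$ and $T(z)\in z^2V[[z]]$ with $\Omega_{\bp^1}=R\cdot T\cdot\Omega^{\text{top}}$, where $\Omega^{\text{top}}$ is the semisimple two-dimensional TFT attached to these idempotents.

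Next I would identify the spectral curve. For $x=z+z^{-1}$ the differential $dx=(1-z^{-2})\,dz$ has its two zeros at $z=\pm1$, matching the two idempotents, with critical values $x(\pm1)=\pm2$ reproducing the canonical coordinates. Introducing the local coordinate $s$ by $x=\tfrac12 s^2+x(\pm1)$ at each branch point, I would expand $y=\log z$ to read off $T^y(z)$ and $\Omega^{\text{top},y}_{0,1}$ from \eqref{transy}---note that $\log z$ is regular at $z=\pm1$, consistent with the flat unit---and expand the Cauchy kernel to read off $R_B(z)$ from \eqref{RmatB}. Verifying that $R_B=R$, $T^y=T$ and $\Omega^{\text{top},y}=\Omega^{\text{top}}$ (the $R$-matrix comparison being the most computation-heavy step) lets Theorem~\ref{DOSS} identify $Z^{S_{\bp^1}}$ with the ancestor partition function; equivalently, the correlators $\omega_{g,n}$ expanded at the branch points generate the ancestor invariants $\langle\prod_i\overline{\tau}_{b_i}(\omega)\rangle^g_d$.

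Finally I would convert ancestors to descendants. The stationary descendant invariants assembled in $D_{g,n}$ are obtained from the ancestor invariants through the $S$-matrix \eqref{SKM} via \eqref{ancdes}. The essential point---and the part I expect to be the main obstacle---is that for the specific global function $x=z+z^{-1}$ this $S$-matrix action is realised geometrically by re-expanding each correlator $\omega_{g,n}$ at $x_i=\infty$ instead of at $z=\pm1$: the weights $(b_i+1)!\,x_i^{-b_i-2}\,dx_i$ are exactly the terms selected by expanding at $x_i=\infty$, so that $D_{g,n}$ becomes the analytic expansion of $\omega_{g,n}$ there. Establishing that this re-expansion from the branch points to the global point $x=\infty$ coincides with the analytic ancestor-to-descendant transformation is the delicate step; once it is in place the two unstable cases $(g,n)=(0,1),(0,2)$ are treated separately by removing the singular parts of $\omega_{0,1}$ and $\omega_{0,2}$ at $x=\infty$.
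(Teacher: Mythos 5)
Your outline follows the same route as the proof the paper relies on: the identification in \cite{DOSSIde} of $Z^{S_{\bp^1}}$ with the ancestor partition function via the Givental--Teleman decomposition and Theorem~\ref{DOSS}, followed by the observation that the Kontsevich--Manin ancestor-to-descendant change of variables \eqref{ancdes} is realised by re-expanding the globally defined correlators at $x=\infty$, i.e.\ by the residues $\Res_{x_i=\infty}\prod_i x_i^{b_i+1}\omega_{g,n}/(b_i+1)!$. You correctly isolate the two genuinely non-trivial verifications---the $R$-matrix comparison and the identification of the $S$-matrix action with the change of expansion point---so the proposal is a faithful sketch of the argument the paper invokes.
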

Note that the function $y(z)=\log{z}$ is locally well-defined up to a constant around $z=\pm 1$, the zeros of $dx$, so the invariants $\omega_{g,n}$ are well-defined.  Theorem~\ref{TRGWP1} was proven for $g=0$ and $g=1$ in \cite{NScGro} and proven in general in \cite{DOSSIde}.   

In \cite{DOSSIde} it was proven that the partition function of the spectral curve $S_{\bp^1}$ coincides with the ancestor partition function:
$$Z^{S_{\bp^1}}(\h,\{\bar{t}_k,\bar{s}_k\})=\hat{Z}_{\bp^1}(\h,\{\bar{t}_k,\bar{s}_k\}).$$
The variables $t_k$ and $s_k$ correspond to the basis $\{1,\omega\}$ of $H^*(\bp^1)$. This requires a linear change of coordinates from the coordinates $u^{\alpha}_k$ in the partition function in Definition~\ref{TRpart}.  
$$Z^{S_{\bp^1}}(\h,\{\bar{t}_k,\bar{s}_k\})=\left.\exp\sum_{g,n}\frac{\h^{2g-2}}{n!}\omega^{S_{\bp^1}}_{g,n}\right|_{\mathfrak{t}_k(z)=\bar{t}_k,\mathfrak{s}_k(z)=\bar{s}_k}$$
where we have replaced the auxiliary differentials defined in \eqref{Vdiff} with $\mathfrak{t}_k(z)= U^1_k(z)-iU^2_k(z)$ and $\mathfrak{s}_k(z)= U^1_k(z)+iU^2_k(z)$ or equivalently
\begin{equation} \label{Tdiff}
\begin{aligned}
\mathfrak{t}_0(z)&=\frac{1}{2}\bigg(\frac{1}{(z - 1)^2} - \frac{1}{(z + 1)^2}\bigg)dz,\quad \mathfrak{t}_{k+1}(z)=d\left(\frac{\mathfrak{t}_k(z)}{dx(z)}\right), \quad k\geq 0\\
\mathfrak{s}_0(z)&=\frac{1}{2}\bigg(\frac{1}{(z - 1)^2} + \frac{1}{(z + 1)^2}\bigg)dz,\quad
\mathfrak{s}_{k+1}(z)=d\left(\frac{\mathfrak{s}_k(z)}{dx(z)}\right), \quad k\geq 0.
\end{aligned}
\end{equation}

We can apply Proposition~\ref{thetadecomp} to produce a new spectral curve with partition function the $\Theta$-Gromov-Witten invariants of $\bp^1$.
\begin{theorem}  \label{TRGW1}
The topological recursion partition function of the spectral curve 
$$S^{\Theta}_{\bp^1}=(C,x,y,B)=\left(\bc,\quad x=z+\frac{1}{z},\quad y=\frac{z}{z^2-1},\quad B=\frac{dzdz'}{(z-z')^2}\right)$$
coincides with the ancestor partition function of $\Theta$-Gromov-Witten invariants of $\bp^1$
$$\hat{Z}^{\Theta}_{\bp^1}(\h,\{\bar{t}_k,\bar{s}_k\})=\left.\exp\sum_{g,n}\frac{\h^{2g-2}}{n!}\omega^{S^{\Theta}_{\bp^1}}_{g,n}\right|_{\mathfrak{t}_k(z)=\bar{t}_k,\mathfrak{s}_k(z)=\bar{s}_k}
$$
where the differentials $\mathfrak{t}_k(z)$ and $\mathfrak{s}_k(z)$ are defined in \eqref{Tdiff}.
\end{theorem}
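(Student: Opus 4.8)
The plan is to prove Theorem~\ref{TRGW1} by transporting the known spectral-curve description of the usual Gromov-Witten theory of $\bp^1$ through the $\Theta$-twist: Proposition~\ref{thetadecomp} tells us exactly how the Givental data $(R,T,\Omega^{\text{top}})$ transform, and Theorem~\ref{CNo} converts the twisted data back into a spectral curve. First I would record that the ancestor CohFT $\Omega_{\bp^1}$ is semisimple, since its quantum product is $\bc[\omega]/(\omega^2-1)\cong\bc\oplus\bc$. Theorem~\ref{TRGWP1} together with Theorem~\ref{DOSS} then identifies the decomposition $\Omega_{\bp^1}=R\cdot T\cdot\Omega^{\text{top}}$ with the spectral curve $S_{\bp^1}$ having $x=z+1/z$, $y^{\circ}=\log z$ and $B$ the Cauchy kernel, so that $R=R_B$ from \eqref{RmatB}, while $T=T^{y^{\circ}}$ and $\Omega^{\text{top}}=\Omega^{\text{top},y^{\circ}}$ are read off from \eqref{transy}.

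Applying Proposition~\ref{thetadecomp}, the twisted degenerate CohFT satisfies $\Omega_{\bp^1}^{\Theta}=R\cdot T_0\cdot(\Omega^{\text{top}})^{\Theta}$ with the same $R$ and with $T_0(z)=T(z)/z$; by Definition~\ref{cohftheta} its partition function is precisely the ancestor $\Theta$-Gromov-Witten partition function $\hat{Z}^{\Theta}_{\bp^1}$. Feeding this data into Theorem~\ref{CNo} yields a local spectral curve with the same $R=R_B$, hence the same $x$ and $B$ as $S_{\bp^1}$, together with a function $y$ whose local expansion at the now irregular zeros $z=\pm1$ of $dx$ reproduces $T_0$ and $\Omega^{\text{top}}$ through the irregular formula \eqref{transyi}.

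The crux is the verification that the prescribed $y$ is exactly $y=z/(z^2-1)$, and the key identity is $z/(z^2-1)=d(\log z)/dx$, immediate from $d(\log z)=dz/z$ and $dx=\frac{z^2-1}{z^2}dz$. In the local coordinate $s$ with $x=\tfrac12 s^2+x(\alpha)$ one has $dx=s\,ds$, so $d/dx=\tfrac1s\,d/ds$, and writing $y^{\circ}=\sum_{j\geq 0}y^{\circ}_j s^j$ gives the coefficient shift $y_i=(i+2)y^{\circ}_{i+2}$ for $y=dy^{\circ}/dx$. Reading off the relevant entries, $y_{-1}=y^{\circ}_1$, so the topological field theory $\Omega^{\text{top}}$ is unchanged, as it must be under the $\Theta$-twist; and $y_{2m-1}=(2m+1)y^{\circ}_{2m+1}$, so $y_{2m-1}/y_{-1}=(2m+1)y^{\circ}_{2m+1}/y^{\circ}_1$. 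Comparing the irregular formula \eqref{transyi} for $y$ against $T_0(z)=T^{y^{\circ}}(z)/z$ built from \eqref{transy}, and using $(2m+1)!!=(2m+1)(2m-1)!!$, shows these match term by term; since the identity $y=dy^{\circ}/dx$ is global on $C$ it handles both zeros $z=\pm1$ at once.

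Finally I would observe that the topological recursion correlators for $2g-2+n>0$ depend on $y$ only through $y(p)-y(\hat p)$, hence only through the odd part of its local expansion and the pole coefficient $y_{-1}$; the even coefficients of the $y$ produced by Theorem~\ref{CNo} are therefore irrelevant, and the explicit global representative $y=z/(z^2-1)$ yields the same $\omega_{g,n}$. Because $x$ and $B$ agree with those of $S_{\bp^1}$, the auxiliary differentials and the variables $\mathfrak{t}_k,\mathfrak{s}_k$ of \eqref{Tdiff} are literally unchanged, so the substitution identifying $Z^{S^{\Theta}_{\bp^1}}$ with $\hat{Z}^{\Theta}_{\bp^1}$ is inherited verbatim from Theorem~\ref{TRGWP1}. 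I expect the only real obstacle to be conceptual bookkeeping: ensuring that the spectral curve delivered by Theorem~\ref{CNo} pins $y$ down only up to even coefficients and that $y=z/(z^2-1)$ is a legitimate global representative, the coefficient comparison in \eqref{transy}--\eqref{transyi} being the sole genuine computation.
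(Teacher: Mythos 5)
Your proposal is correct and follows essentially the same route as the paper: invoke Proposition~\ref{thetadecomp} to see that $R$ and the TFT are unchanged while $T\mapsto T_0=T/z$, note that $(C,x,B)$ is therefore inherited from $S_{\bp^1}$, and identify the shift $T\mapsto T/z$ with the replacement $y\mapsto dy/dx$ via the local coefficient comparison, yielding $y=d(\log z)/dx=z/(z^2-1)$ globally. Your added remarks (semisimplicity, that only the odd local expansion and $y_{-1}$ matter so the global representative is legitimate, and that the $\mathfrak{t}_k,\mathfrak{s}_k$ are unchanged) are all consistent with, and slightly more explicit than, the paper's argument.
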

\begin{proof}
By Proposition~\ref{thetadecomp}, the ancestor partition functions of Gromov-Witten invariants of $\bp^1$ and $\Theta$-Gromov-Witten invariants of $\bp^1$ use the same element $R(z)$ of the twisted loop group, the same topological field theory and differ only by the shifted translation term $T_0(z)$.  The element $R(z)$ is encoded by the $(C,x,B)$ part of the spectral curve hence $(C,x,B)$ is the same for $S^{\Theta}_{\bp^1}$ and $S_{\bp^1}$.  It remains to find $y$ for $S^{\Theta}_{\bp^1}$ which determines the translation term $T_0(z)$.  We will write the spectral curve $S_{\bp^1}=(C,x,Y,B)$ where $Y=z$ so as not to confuse it with $y$.  Choose a local coordinate $s$ around a zero $p_{\alpha}$ of $dx$, by $x=\frac12 s^2+x(p_{\alpha})$.  In this local coordinate, the odd part of $y$ is 
$$y^{\alpha}_{\rm odd}=\sum_{k=0}^{\infty}y^{\alpha}_{2k-1}s^{2k-1}$$
which we have allowed to have a simple pole at $s=0$.  It is proven in \cite{CNoTop} that the translation action on partition functions induced from the translation action \eqref{transl} on CohFTs is realised in topological recursion by $u^{\alpha}_k\mapsto u^{\alpha}_k- (2k-1)!!\frac{y^{\alpha}_{2k-1}}{y^{\alpha}_{-1}}$ for $k>0$ if $y^{\alpha}_{-1}\neq 0$. Equivalently the $\alpha$ component of the (series-valued) vector $T_0(z)$ is
$$(T_0(z))_{\alpha}=-\sum_{k=1}^{\infty} (2k-1)!!\frac{y^{\alpha}_{2k-1}}{y^{\alpha}_{-1}}z^{k}.$$

Locally the shift $T(z)\mapsto T(z)/z=T_0(z)$ is achieved by differentiation $Y\mapsto dY/dx=y$ since near $p_\alpha$ we have $x=\frac12 s^2$, $Y=\sum_{k=0}^{\infty}Y^{\alpha}_{2k+1}s^{2k+1}$, and $dY^{\alpha}_{\rm odd}/dx=\sum_{k=0}^{\infty}(2k+1)Y^{\alpha}_{2k+1}s^{2k-1}$ which maps to 
$$(T_0(z))_{\alpha}=-\sum_{k=0}^{\infty} (2k-1)!!\cdot\frac{(2k+1)Y^{\alpha}_{2k+1}}{Y^{\alpha}_{1}}z^{k+1}=\frac{1}{z}T(z).
$$
Differentiate globally to get 
$$Y(z)=\frac{dy}{dx}(z)=\frac{y'(z)}{x'(z)}=\frac{1/z}{1-1/z^2}=\frac{z}{z^2-1}.
$$
as required.
\end{proof}
\begin{proof}[Proof of Theorem~\ref{TRGW}] 
By Theorem~\ref{TRGW1} the ancestor $\Theta$-Gromov-Witten invariants of $\bp^1$ are encoded in the correlators $\omega^{S^{\Theta}_{\bp^1}}_{g,n}$.  Hence by Proposition~\ref{th:ancdest} the stable descendant $\Theta$-Gromov-Witten invariants of $\bp^1$ are also encoded in the correlators $\omega^{S^{\Theta}_{\bp^1}}_{g,n}$.  For the unstable terms, the genus zero 2-point function $\langle\tau_{k_1}(e_{\alpha_1})\tau_{k_2}(e_{\alpha_2})\rangle^0_{\bf d}$ of Gromov-Witten invariants and $\Theta$-Gromov-Witten invariants coincide,  and the former is known to be encoded in $\omega^{S_{\bp^1}}_{0,2}=\omega^{S^{\Theta}_{\bp^1}}_{0,2}$ by \eqref{excep}, so the same is true for the latter.  The genus zero 1-point function can be calculated as follows.  By the definition of the genus zero 1-point invariants in \eqref{def01}, 
$$\left\langle\Theta\cdot\tau_k(\omega)\right\rangle^0_{\bp^1}:=-\left\langle\tau_0(1)\tau_k(\omega)\right\rangle^0_{\bp^1}=-\left\langle\tau_{k-1}(\omega)\right\rangle^0_{\bp^1}$$
where the last equality uses the string equation for usual Gromov-Witten invariants.  The degree $d\in\bn$ is uniquely determined by the insertions so it is hidden.  Hence
$$\sum_{d>0}\left\langle \Theta\cdot\tau_{2d-1}(\omega) \right\rangle^0_d \frac{(2d)!}{x^{2d+1}}=-\sum_{d>0}\left\langle\tau_{2d-2}(\omega) \right\rangle^0_d\frac{(2d)!}{x^{2d+1}}=\frac{d}{dx}\sum_{d>0}\left\langle\tau_{2d-2}(\omega) \right\rangle^0_d\frac{(2d-1)!}{x^{2d}}=\frac{df}{dx}
$$
where $f=\log{z}+\log{x}$ for $z=2/(x+\sqrt{x^2-4})$, is proven in \cite{NScGro}.  Hence, for $\omega^0_1=-ydx=-\frac{z}{z^2-1}dx$ we have
$$\sum_{d>0}\left\langle \Theta\cdot\tau_{2d-1}(\omega) \right\rangle^0_d \frac{(2d)!}{x^{2d+1}}dx=\omega^0_1+\frac{dx}{x}
$$
as required.  Note that the $dx/x$ term is there to cancel $-dx/x$ in the expansion of $\omega^0_1$ at $x=\infty$.
\end{proof}
\begin{proof}[Proof of Theorem~\ref{partint}]
The integral in \eqref{partleg} can be equivalently expressed as a matrix integral over $N\times N$ Hermitian matrices with eigenvalues contained in the interval $[-2,2]$
\begin{equation}  \label{legendre}
\int_{H_N[-2,2]}\exp{(NV(M))}DM
\end{equation}
where the measure is Euclidean on the entries of the matrix and $V(M)=\displaystyle\sum_{k\geq 0} s_k\ {\rm tr}(M^{k+1})$.  It is known as the Legendre ensemble.

It is proven in \cite{BGuAsy} that the cumulants of the resolvents of the Legendre ensemble have asymptotic expansions as $N\to\infty$ given by 
$$\left\langle {\rm tr} \left(\frac{1}{x_1-M}\right)\cdots {\rm tr} \left(\frac{1}{x_n-M}\right)\right\rangle^c\stackrel{N\to\infty}{\sim}\sum_{g>0} N^{2-2g-n}W_{g,n}(x_1,...,x_n)$$
which define $W_{g,n}(x_1,...,x_n)$.  These $W_{g,n}(x_1,...,x_n)$ satisfy loop equations which coincides with topological recursion applied to a spectral curve obtained from the large $N$ limit of the resolvent
$$y(x)=\lim_{N\to\infty}N^{-1}\left\langle {\rm tr} \left(\frac{1}{x-M}\right)\right\rangle.
$$
It is well-known that $y(x)$ coincides with the spectral curve $S^{\Theta}_{\bp^1}$, or it can be seen explicitly at the beginning of the proof of Theorem~\ref{qcproof}.    Furthermore, $W_{0,2}(x_1,x_2)$ is well-known to coincide with $\omega_{0,2}-\displaystyle\frac{dx_1dx_2}{(x_1-x_2)^2}$.  Hence $W_{g,n}(x_1,...,x_n)$ satisfies topological recursion for the spectral curve $S^{\Theta}_{\bp^1}$ or more precisely gives an expansion of $\omega^{S^{\Theta}_{\bp^1}}_{g,n}$ at $x=\infty$.  

By Theorem~\ref{TRGW}, the ancestor $\Theta$-Gromov-Witten invariants invariants are encoded in the correlators $\omega^{S^{\Theta}_{\bp^1}}_{g,n}$.  Furthermore, the same basis of differentials \eqref{Tdiff}, whose coefficients give the ancestor Gromov-Witten invariants, is used for $S_{\bp^1}$ and $S^{\Theta}_{\bp^1}$ since the differentials depend only on the data of $(C,x,B)$ which coincides for $S_{\bp^1}$ and $S^{\Theta}_{\bp^1}$.  The linear change of coordinates used in \eqref{ancdes} to relate ancestor and descendant Gromov-Witten invariants is achieved by taking the linear function $\Res_{\infty}x^k\cdot$ on the correlators.  In other words,
$$\left\langle \prod_{i=1}^n \tau_{b_i}(\omega) \right\rangle^g_d=\Res_{x_1=\infty}...\Res_{x_n=\infty}\prod_{i=1}^n\frac{x_i^{b_1+1}}{(b_i+1)!}\omega^{S_{\bp^1}}_{g,n}.
$$
But the linear change of coordinates used in \eqref{ancdest} to relate ancestor and descendant $\Theta$-Gromov-Witten invariants is the same hence we also have: 
$$\left\langle\Theta \cdot\prod_{i=1}^n \tau_{b_i}(\omega) \right\rangle^g_d=\Res_{x_1=\infty}...\Res_{x_n=\infty}\prod_{i=1}^n\frac{x_i^{b_1+1}}{(b_i+1)!}\omega^{S^{\Theta}_{\bp^1}}_{g,n}.
$$
Thus we have proven that the formula $D^\Theta_{g,n}(x_1,...,x_n)$ in \eqref{Dgn} is an expansion of the correlators  $\omega^{S^{\Theta}_{\bp^1}}_{g,n}$, and in particular $D^\Theta_{g,n}(x_1,...,x_n)$ satisfy the same recursion with the same initial conditions, as $W_{g,n}(x_1,...,x_n)$.  Hence $D_{g,n}(x_1,...,x_n)=W_{g,n}(x_1,...,x_n)$ and their respective partition functions $Z_{\bp^1}^{\Theta}(\h,\{s_k\},\{t_k=0\})$ and $\int_{H_N[-2,2]}\exp{(NV(M))}DM$ coincide up to a constant for $\h=1/N$.
\end{proof}

The equivariant Gromov-Witten invariants of $\bp^1$ with respect to the $\bc^*$ action were studied in \cite{OPaEqu}.   The equivariant $\Theta$-Gromov-Witten invariants of $\bp^1$ are well-defined since the group acts only on the target $\bp^1$.   In \cite{FLZEyn} the equivariant Gromov-Witten invariants of $\bp^1$ are shown to arise from topological recursion applied to the spectral curve
$$ x=z+1/z+w\log z,\quad y=\log z
$$
where $w$ is the equivariant parameter of the $\bc^*$ action.  When $w=0$ this gives the spectral curve $S_{\bp^1}$.   It is shown in \cite{FLZEyn} that this is example is semisimple with $R(z)$ and $T(z)$ determined by the spectral curve.  The techniques of this paper should enable the calculation of the equivariant $\Theta$-Gromov-Witten invariants of $\bp^1$ using the spectral curve
$$ x=z+1/z+w\log z,\quad y=\frac{z}{z^2+wz-1}
$$
obtained by replacing $y$ with $dy/dx$ as described in the proof of Theorem~\ref{TRGW1}.

\subsection{Quantum curve}   \label{sec:qc}
A quantum curve of a spectral curve $S=\{(x,y)\in\bc^2\mid P(x,y)=0\}$ is a linear differential equation 
\begin{equation}   \label{waveq}
\widehat{P}(\hat{x}, \hat{y}) \, \psi(p,\h)=0
\end{equation}
where $p\in S$, $\h$ is a formal parameter, and $\widehat{P}(\hat{x}, \hat{y})$ is a differential operator-valued non-commutative quantisation of the plane curve with $\hat{x}=x\cdot$ and $\hat{y}=\h\frac{d}{dx}$, and in particular $[\hat{x},\hat{y}]=-\h$.

The wave function $\psi(p,\h)$ is given by
$$
\psi(p,\h)=\exp(\h^{-1}S_0(p)+S_1(p)+\h S_2(p)+\h^2 S_3(p)+...)
$$
and the $S_k(p)$ are calculated recursively via \eqref{waveq}, which is known as the WKB method.   The $S_k(p)$ are meromorphic functions on $S$ for $k>1$.  The operator $\frac{d}{dx}$ acts on meromorphic functions via composition of the exterior derivative followed by division by $dx$, 
and coincides with usual differentiation of a meromorphic function around a point where $x$ defines a local coordinate.
In many cases, the wave function is related to topological recursion applied to $S$ via
$$
S_k(p)=\sum_{2g-1+n=k}\frac{(-1)^n}{n!}\int^p_{p_0}\int^p_{p_0}...\int^p_{p_0}\omega^S_{g,n}(p_1,...,p_n)
$$
and this relation is proven for $S=S^{\Theta}_{\bp^1}$ in Theorem~\ref{qcproof} below.
See \cite{BEyRec,GSuApo,NorQua} for a description of the relationship of quantum curves to topological recursion.  
\begin{theorem}  \label{qcproof}
The quantum curve of $S^{\Theta}_{\bp^1}$ is given by
$$\left((4-x^2)\h^2\frac{d^2}{dx^2}-2x\h^2\frac{d}{dx}+1+\h\right)Z_{\bp^1}^{\Theta}\Big(s_k=\h\frac{k!}{x^{k+1}},t_k=0\Big)=0.
$$
It is related to topological recursion via
$$Z_{\bp^1}^{\Theta}\Big(s_k=\h\frac{k!}{x^{k+1}},t_k=0\Big)=\exp(\h^{-1}S_0(p)+S_1(p)+\h S_2(p)+\h^2 S_3(p)+...)$$
for $S_k$ defined by
\begin{equation}   \label{wavef}
S_k(p)=\sum_{2g-1+n=k}\frac{(-1)^n}{n!}\int^p_{\infty}\int^p_{\infty}...\int^p_{\infty}\omega^{S^{\Theta}_{\bp^1}}_{g,n}(p_1,...,p_n).
\end{equation}
\end{theorem}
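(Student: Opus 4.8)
The plan is to prove the two assertions in turn: first the identification \eqref{wavef} of the WKB coefficients $S_k$ with integrated topological recursion correlators, and then, using this, the differential equation.

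For the WKB/topological-recursion identification I would work directly from Theorem~\ref{TRGW}. By definition $\log\psi=F^{\Theta}_{\bp^1}(\h,\{s_k=\h k!/x^{k+1}\},\{t_k=0\})$, and after the substitution the stable term indexed by $(g,n)$ carries a factor $\h^{2g-2}\prod_i s_{b_i}=\h^{2g-2+n}\prod_i b_i!\,x^{-b_i-1}$. On the other hand Theorem~\ref{TRGWP1th} says the same invariants are the Taylor coefficients of $\omega^{S^{\Theta}_{\bp^1}}_{g,n}$ at $x_i=\infty$, assembled into $D^{\Theta}_{g,n}$. Integrating each variable from $\infty$ to $p$ turns $(b_i+1)!\,x_i^{-b_i-2}dx_i$ into $-b_i!\,x^{-b_i-1}$, so that $\int_\infty^p\cdots\int_\infty^p\omega^{S^{\Theta}_{\bp^1}}_{g,n}=(-1)^n\sum_{\bf b}\langle\Theta\cdot\prod\tau_{b_i}(\omega)\rangle^g_d\prod_i b_i!\,x^{-b_i-1}$. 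Comparing with the previous expression, the $(g,n)$ contribution to $\log\psi$ equals $\h^{2g-2+n}\frac{(-1)^n}{n!}\int_\infty^p\cdots\int_\infty^p\omega_{g,n}$, and collecting by $k=2g-1+n$ yields exactly \eqref{wavef}. The two unstable terms $(0,1)$ and $(0,2)$ I would treat using the regularizations \eqref{excep}, taking the bare correlators $\omega_{0,1},\omega_{0,2}$ so that $S_0=-\int_\infty^p\omega_{0,1}=\int^p y\,dx=\log z$ (using $\omega_{0,1}=-y\,dx=-dz/z$) and $S_1$ picks up its logarithmic piece; the resulting factor $\exp(\h^{-1}\log z)\sim x^{1/\h}$ is precisely the non-analytic prefactor, so that $x^{-1/\h}\psi$ is the genuine power series $Z^{\Theta}_{\bp^1}(s_k=\h k!/x^{k+1})$.

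With \eqref{wavef} established, the differential equation becomes a statement about the correlators. Since $S_0=\log z$ we have $S_0'=dS_0/dx=y$, so the principal symbol of any annihilating operator is the classical spectral curve: $(4-x^2)(S_0')^2+1=(4-x^2)y^2+1=0$ by \eqref{specurve}. Because $x=z+1/z$ has degree two as a map to $\bp^1$, the two branches $z$ and $1/z$ over a given $x$ supply two local solutions (obtained by integrating from the two preimages $z\to\infty$ and $z\to0$ of $x=\infty$), so I expect a second-order operator of the form $-\hat{y}(x^2-4)\hat{y}+c_0+c_1\h+\cdots$ with $\hat{y}=\h\,d/dx$ and $c_0=1$ forced by the symbol. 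Note $-\hat{y}(x^2-4)\hat{y}$ expands to $(4-x^2)\h^2 d^2/dx^2-2x\h^2 d/dx$, so the symmetric ordering and the value $c_1=1$ should be read off by computing $S_1$ from $\omega_{0,2}$ (with $\omega_{1,1}$ entering at the next step) and substituting $\psi=\exp(\h^{-1}S_0+S_1+\cdots)$ into the operator: the $\h^0$ coefficient reproduces the symbol and the $\h^1$ coefficient fixes both the ordering and $c_1=1$.

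The crux is to show that the operator \emph{truncates} at order $\h$, i.e.\ that all higher quantum corrections $c_{\geq2}$ vanish, so that $\psi$ is annihilated exactly by $(4-x^2)\h^2 d^2/dx^2-2x\h^2 d/dx+1+\h$. I would establish this through the reconstruction of the wave function from the linear and quadratic loop equations satisfied by the $\omega_{g,n}$ (the abstract loop equations underlying topological recursion; see \cite{BEyRec,NorQua}): under the principal specialization defining $\psi$, the linear loop equation yields the first-order part and the quadratic loop equation yields the second-order part of the operator, while the regularity of these combinations at the two simple ramification points $z=\pm1$ forces the corrections to terminate for this rational, degree-two curve. As a concrete cross-check I would recognise that the resulting equation is Legendre's equation with parameter $\ell=1/\h$: the substitution $x=2X$ turns it into $(1-X^2)\psi_{XX}-2X\psi_X+\tfrac1\h(\tfrac1\h+1)\psi=0$, one identifies $\psi$ with the Legendre function $P_{1/\h}(x/2)$ on the branch $z\to\infty$ by matching the WKB series termwise (the second solution $Q_{1/\h}(x/2)$ corresponding to $z\to0$), and the equation then holds by the defining property of the Legendre functions, whose $X^{\ell}$ growth and branch points at $X=\pm1$ match the $x^{1/\h}$ prefactor and the ramification at $x=\pm2$. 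The main obstacle throughout is controlling the $\h$-dependence of the quantum corrections and proving this exact truncation at order $\h$; the leading semiclassical identity is immediate from $S_0'=y$.
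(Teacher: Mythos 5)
Your first paragraph (the identification of the WKB coefficients $S_k$ with the integrated correlators) is correct and is essentially the paper's own argument: one takes the primitive $F^{\Theta}_{g,n}$ of $(-1)^n D^{\Theta}_{g,n}$, specialises $x_i=x$, and compares with the substitution $s_k=\h\,k!/x^{k+1}$ in the free energy, handling $(0,1)$ and $(0,2)$ via the regularisations \eqref{excep}.

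The second half, however, has a genuine gap exactly where you flag it. Your primary route is to derive the operator from the abstract loop equations and then argue that ``the regularity of these combinations at the two simple ramification points $z=\pm1$ forces the corrections to terminate.'' No such general principle is available: for a principal-specialisation wave function the quantum corrections $c_k$ generically appear at all orders in $\h$, and proving that they vanish for $k\geq 2$ is precisely the content of the theorem, not something that follows from rationality or from the curve having degree two. Your fallback, identifying $\psi$ with $P_{1/\h}(x/2)$ ``by matching the WKB series termwise,'' is circular as stated: to match termwise you would already need to know that the all-order asymptotic expansion of the Legendre function coincides with $\psi$, which is the assertion to be proved. The missing ingredient is the matrix-integral representation from Theorem~\ref{partint}: the paper shows that $\sum_n\frac{1}{n!}\int_\infty^x\cdots\int_\infty^x W_n$ exponentiates to the expectation of the characteristic polynomial $\prod_{j}(x-t_j)$ over the Legendre ensemble, which by the Heine-type formula of Aomoto \cite{AomJac} equals $d_N P_N(x/2)$ exactly (not just asymptotically); the differential equation is then the classical Legendre equation with $N=1/\h$, rescaled by $x\mapsto x/2$, and the truncation at order $\h$ is automatic because $N(N+1)\h^2=1+\h$. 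Without invoking Theorem~\ref{partint} (or some equivalent exact determinantal/orthogonal-polynomial identity) your argument does not close; if you want to avoid the matrix model you would instead have to prove the all-order WKB identity for $P_{1/\h}$ directly, which is a separate and nontrivial task.
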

\begin{proof}
Put $\psi(x,\h)=\exp(\h^{-1}S_0(x)+S_1(x)+O(\h))$.  Then 
$$\psi(x,\h)^{-1}\left((4-x^2)\h^2\frac{d^2}{dx^2}-2x\h^2\frac{d}{dx}+1+\h\right)\psi(x,\h)=(4-x^2)\frac{d^2S_0(x)}{dx^2}+1+O(\h).
$$
Let $\h\to 0$, so we see that $\frac{d}{dx}S_0(x)$ defines the spectral curve and indeed $dS_0(x)=-ydx=\omega_{0,1}(p)$ as required.
More generally, we will prove that the wave function agrees with the form in \eqref{wavef} to all orders.   We have a primitive of $(-1)^n\omega^{S^{\Theta}_{\bp^1}}_{g,n}(p_1,...,p_n)=(-1)^nD^{\Theta}_{g,n}(x_1,...,x_n)$  given by
$$F^{\Theta}_{g,n}(x_1,...,x_n)=\sum_{\bf b}\left\langle \Theta\cdot\prod_{i=1}^n \tau_{b_i}(\omega) \right\rangle^g_d\cdot\prod_{i=1}^n(b_i)!x_i^{-b_i-1}$$
which we specialise to $x_i=x$ to get
$$S_k(x)=\sum_{2g-1+n=k}\frac{1}{n!}\sum_{{\bf b},d}\left\langle \Theta\cdot\prod_{i=1}^n \tau_{b_i}(\omega) \right\rangle^g_d\cdot\prod_{i=1}^n(b_i)!x^{-b_i-1}
$$
leading to
$$\psi(x,\h)=Z_{\bp^1}^{\Theta}\Big(s_k=\h\frac{k!}{x^{k+1}},t_k=0\Big)
$$
as required.  

Next, we prove the differential equation.  Define the resolvents $W_n(x_1,...,x_n)$ by
$$W_n(x_1,...,x_n)=\frac{c_N}{N!}\int_{-2}^2\int_{-2}^2...\int_{-2}^2dt_1...dt_N\prod_{i<j}(t_i-t_j)^2\prod_{i=1}^n\sum_{j=1}^N\frac{1}{x_i-t_j}$$
where $1/c_N=Z_{\bp^1}^{\Theta}\Big(\hbar=1/N,s_k=0,t_k=0\Big)$ is a constant which will play no role in the solution of a linear differential equation.  Hence
\begin{align*}
\sum_{n=1}^{\infty}\frac{1}{n!}\int^x_\infty\int^x_\infty...\int^x_\infty W_n(x_1,...,x_n)&=\sum_{n=1}^{\infty}\frac{c_N}{N!}\int_{-2}^2\int_{-2}^2...\int_{-2}^2dt_1...dt_N\prod_{i<j}(t_i-t_j)^2\frac{1}{n!}\left(\log\prod_{j=1}^N(x-t_j)\right)^n\\
&=\frac{c_N}{N!}\int_{-2}^2\int_{-2}^2...\int_{-2}^2dt_1...dt_N\prod_{i<j}(t_i-t_j)^2\prod_{j=1}^N(x-t_j)
\end{align*}
The left hand side is the exponential of the same multiple integral formula specialised at $x$ over its cumulants $W^g_n(x_1,...,x_n)=D^{\Theta}_{g,n}(x_1,...,x_n)/\prod dx_i$.  Thus we have (up to a constant in $x$) 
$$\psi(x,\h)=\frac{1}{N!}\int_{-2}^2\int_{-2}^2...\int_{-2}^2dt_1...dt_N\prod_{i<j}(t_i-t_j)^2\prod_{j=1}^N(x-t_j)=d_NP_N(\frac12 x)$$
where $P_N(x)$ is the degree $N$ Legendre polynomial up to a constant in $x$.  The second equality is proven in \cite{AomJac}.
Hence $\psi(x,\h)$ is an asymptotic expansion of $d_NP_N(\frac12 x)$ as $1/\h=N^2\to\infty$.

The Legendre polynomial satisfies the Legendre differential equation
$$(1-x^2)\frac{d^2}{dx^2}P_N(x)-2x\frac{d}{dx}P_N(x)+N(N+1)P_N(x)=0
$$
hence after rescaling $x\mapsto x/2$ and setting $N=1/\h$ we have
$$(4-x^2)\h^2\frac{d^2}{dx^2}\psi(x,\h)-2x\h^2\frac{d}{dx}\psi(x,\h)+(1+\h)\psi(x,\h)=0
$$
as required.
\end{proof}
\begin{remark}
Note that we can incorporate $\tau_0(1)$ insertions into the quantum curve by defining the wave function:
$$\psi(x,\h,t)=Z_{\bp^1}^{\Theta}\Big(s_k=\h\frac{k!}{x^{k+1}},t_0=\h t,t_k=0,k>0\Big).
$$
which satisfies the differential equation
$$\left((4-x^2)\h^2\frac{d^2}{dx^2}-2x\h^2\frac{d}{dx}+(1-t)(1-t+\h)\right)\psi(x,\h,t)=0.
$$
This corresponds to the spectral curve $S_t=\{(x,y)\in\bc^2\mid (x^2-4)y^2=(1-t)^2\}$ which replaces $y\mapsto(1-t)y$ hence $\omega_{g,n}\mapsto (1-t)^{2-2g-n}\omega_{g,n}$ hence $S_k\mapsto(1-t)^{1-k}S_k$ which corresponds to $\h\mapsto\h/(1-t)$

\end{remark}


Put $\psi(x,\h)=\hat{\psi}(x,\h)x^{1/\h}$ where $\hat{\psi}(x,\h)$ is a series in $x^{-1}$.  The differential equation becomes
$$\left((4-x^2)\h^2\frac{d^2}{dx^2}-2x\h\left(1+\h-\frac{4}{x^2}\right)\frac{d}{dx}+\frac{4}{x^2}(1-\h)\right)\hat{\psi}(x,\h)=0
$$
Decompose $\hat{\psi}$ into its degree $d$ components:
$$\hat{\psi}(x,\h)=\sum_{d=0}^\infty\hat{\psi}_d(\h)x^{-2d}=\hat{\psi}_0(\h)+\hat{\psi}_1(\h)x^{-2}+...$$
The differential equation gives $(6\h^2-4\h(1+\h))\hat{\psi}_1(\h)=4(1-\h)\hat{\psi}_0(\h)$.  Since $\hat{\psi}_0(\h)=1$ we can calculate $\hat{\psi}_1(\h)$:
\begin{equation}   \label{deg1}
\hat{\psi}_1(\h)=\frac{2(1-\h)}{\h(\h-2)}
\end{equation}
which we will need later.

\subsection{Toda equation}  \label{sec:toda}

The Toda equation for a function $Z(t_0)=Z(\h,\{s_k\},t_0)$ is given by
\begin{equation}   \label{toda}
\frac{\partial^2}{\partial s_0^2}\log Z(t_0)=\frac{Z(t_0+\h)Z(t_0-\h)}{Z(t_0)^2}
\end{equation}
where we show the dependence of $Z$ only the variable $t_0$.  It was conjectured by Eguchi-Yang \cite{EYaTop} and proven by Okounkov-Pandharipande \cite{OPaGro} that the partition function $Z_{\bp^1}(\h,\{s_k\},t_0)$ of Gromov-Witten invariants of $\bp^1$ satisfies \eqref{toda}.  In this section we prove that the partition function $Z^{\Theta}_{\bp^1}(\h,\{s_k\},t_0)$ of $\Theta$-Gromov-Witten invariants of $\bp^1$ also satisfies \eqref{toda}.

The free energy of the Gromov-Witten invariants of $\bp^1$ is given by
\begin{align*}
F_{\bp^1}(\h,\{s_k,t_k\})=&\sum_{g\geq 0}  \h^{2g-2}F_g=  \sum_{g,d}\h^{2g-2}\left\langle\exp\left\{\sum_{k\geq 0}^{\infty}\tau_k(\omega)s_k+\tau_k(1)t_k\right\}\right\rangle^g_d+\frac{1}{12}\log\h\\
=&\h^{-2}\left((1-t_1)^2+\frac{\frac{1}{2}t_0^2s_0}{1-t_1}+s_0(1-t_1)+\frac{1}{2}s_0^2+\frac{\frac{1}{6}t_0^3s_1}{(1-t_1)^2}+...\right)+\frac{1}{12}\log\frac{\h}{1-t_1}-\frac{\frac{1}{24}s_0}{1-t_1}+...
\end{align*}
and $Z_{\bp^1}(\h,\{s_k,t_k\})=\exp F_{\bp^1}(\h,\{s_k,t_k\})$.  The dilaton equation
$$ \frac{\partial}{\partial t_1}Z_{\bp^1}(\h,\{s_k,t_k\})=\left(\h\frac{\partial}{\partial\h}+\sum_{k}t_k\frac{\partial}{\partial t_k}+\sum_{k}s_k\frac{\partial}{\partial s_k}\right)Z_{\bp^1}(\h,\{s_k,t_k\})
$$
is equivalent to the statement that $Z_{\bp^1}(\h,\{s_k,t_k\})$ is homogeneous of degree zero in $\h$, $\{s_k\}$, $\{t_k,\ k\neq 1\}$ and $q=t_1-1$.  For example, the $t_1$ terms combine in $F_{\bp^1}$ to give the degree zero terms $-\frac16 t_0^2s_0\h^{-2}q^{-1}$ and $\frac{1}{12}\log(-\h q^{-1})$.  The boundary term $\frac{1}{12}\log\h$ in $F_{\bp^1}$ as discussed in \cite{EHYTop}, is required to counteract the term $\frac{1}{12}t_1$.   The presence of a $\log\h$ term is explained quite generally in Section~\ref{GW}.   Moreover, the Toda equation holds only when the $\frac{1}{12}\log\h$ term is present.   

\begin{theorem}
The partition function satisfies the Toda equation
$$\h^2(1-t_0)^2\frac{\partial^2}{\partial s_0^2}\log Z_{\bp^1}^{\Theta}(\h,\{s_k\},t_0)=\frac{Z_{\bp^1}^{\Theta}(t_0+\h)Z_{\bp^1}^{\Theta}(t_0-\h)}{Z_{\bp^1}^{\Theta}(t_0)^2}.
$$
\end{theorem}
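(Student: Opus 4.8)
The plan is to read off the Toda equation from the random matrix representation of Theorem~\ref{partint}, using the classical fact that the Hankel determinants produced by Heine's formula are tau functions of the Toda lattice. In this dictionary the coupling $s_0$ to $\tau_0(\omega)={\rm tr}(M)$ is the continuous Toda time, while $t_0$ plays the role of the discrete lattice variable through a change of matrix size.

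First I would trade the $t_0$-dependence for a shift of the matrix size. Integrating the dilaton relation of Proposition~\ref{diltheta} gives the homogeneity \eqref{dilPDE}, which is equivalent to
$$Z^{\Theta}_{\bp^1}(\h,\{s_k\},t_0)=Z^{\Theta}_{\bp^1}\Big(\tfrac{\h}{1-t_0},\big\{\tfrac{s_k}{1-t_0}\big\},0\Big).$$
Setting $\h=N^{-1}$ and feeding the right hand side into \eqref{partleg}, this becomes the Legendre ensemble \eqref{legendre} of effective size $\nu:=N(1-t_0)$, with the coefficient of ${\rm tr}(M^{k+1})$ equal to $Ns_k$. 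The crucial point is that since $\nu\cdot\tfrac{s_k}{1-t_0}=Ns_k$, the potential, and in particular the linear source $u:=Ns_0$ multiplying ${\rm tr}(M)=\sum_i x_i$, is the same for every value of $t_0$; only the number of eigenvalues changes. Because $N\h=1$, the shifts $t_0\mapsto t_0\pm\h$ are precisely the unit shifts $\nu\mapsto\nu\mp1$.

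Next I would invoke the Toda structure. By Heine's formula \eqref{legendre} equals $c(\nu)\,\tau_\nu(u)$, where $\tau_\nu(u)=\det\big(\mu_{i+j}(u)\big)_{0\le i,j<\nu}$ is the Hankel determinant of the moments $\mu_m(u)=\int_{-2}^2 x^m\exp\big(ux+\sum_{k\ge1}Ns_k x^{k+1}\big)dx$ and $c(\nu)$ is the normalising constant of Theorem~\ref{partint}. The Desnanot--Jacobi identity applied to the Hankel matrix gives the Hirota bilinear relation $\tau_\nu\,\partial_u^2\tau_\nu-(\partial_u\tau_\nu)^2=\tau_{\nu+1}\tau_{\nu-1}$, that is $\partial_u^2\log\tau_\nu=\tau_{\nu+1}\tau_{\nu-1}/\tau_\nu^2$. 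Using $\partial_u=\h\,\partial_{s_0}$ and the identifications $\tau_{\nu\mp1}(u)=Z^{\Theta}_{\bp^1}(t_0\pm\h)/c(\nu\mp1)$ from the first step, this reads
$$\h^2\,\partial_{s_0}^2\log Z^{\Theta}_{\bp^1}(t_0)=\frac{c(\nu)^2}{c(\nu+1)c(\nu-1)}\cdot\frac{Z^{\Theta}_{\bp^1}(t_0+\h)\,Z^{\Theta}_{\bp^1}(t_0-\h)}{Z^{\Theta}_{\bp^1}(t_0)^2}.$$
The stated identity thus reduces to the single normalisation statement $c(\nu+1)c(\nu-1)/c(\nu)^2=(1-t_0)^2$; and the substitution $s_0=\h(1-t_0)\tilde s_0$ of Theorem~\ref{Todath} is exactly the rescaling $\partial_{\tilde s_0}^2=\h^2(1-t_0)^2\partial_{s_0}^2$ that turns the surviving prefactor into a clean $\partial_{\tilde s_0}^2$.

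The main obstacle is that the effective size $\nu=N(1-t_0)$ is generically not a positive integer, so neither Theorem~\ref{partint} nor the determinantal Desnanot--Jacobi identity applies literally; all three displayed equalities above are so far justified only when $N(1-t_0)\in\bz_{>0}$. I would overcome this by analytic continuation in $\nu$: for the Jacobi/Legendre weight on $[-2,2]$ the squared norms $h_n$ of the orthogonal polynomials, hence the recursion coefficients $\tau_{\nu+1}\tau_{\nu-1}/\tau_\nu^2=h_\nu/h_{\nu-1}$ and the constant $c(\nu)$, are explicit ratios of Gamma functions. This analyticity is already visible in Theorem~\ref{qcproof}, where $\psi(x,\h)$ is an asymptotic expansion of a Legendre polynomial of continuous degree $N=1/\h$. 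Both sides of the proposed equation are then analytic in $\nu$ and agree at every positive integer, so they agree identically, and the same explicit norms evaluate the second difference of $c$ and confirm the factor $(1-t_0)^2$. Carrying out this continuation rigorously, determining the size-dependence of the normalising constant from the explicit Selberg/Jacobi evaluation, and justifying that the asymptotic identity of Theorem~\ref{partint} may be differentiated in $s_0$ and shifted in $t_0$ term by term, is the technical heart of the argument.
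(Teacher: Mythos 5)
Your overall route is the paper's: use the dilaton homogeneity to rewrite $Z^{\Theta}_{\bp^1}(\h,\{s_k\},t_0)$ as the $t_0=0$ partition function at effective size $\nu=(1-t_0)/\h$ with $t_0$-independent matrix couplings, feed this into the Legendre ensemble of Theorem~\ref{partint}, apply the standard Toda equation in the matrix size, and translate $\nu\mapsto\nu\mp1$ into $t_0\mapsto t_0\pm\h$. Your derivation of the matrix-model Toda equation via Heine's formula and Desnanot--Jacobi is equivalent to the paper's orthogonal-polynomial telescoping (both amount to $\tau_{\nu+1}\tau_{\nu-1}/\tau_\nu^2=h_\nu^2/h_{\nu-1}^2$), and your attention to the non-integrality of $\nu$ and to term-by-term manipulation of asymptotic expansions addresses points the paper passes over silently.

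The genuine gap is the final normalisation step. Your chain rule is correct: since $u=\nu\cdot\tfrac{s_0}{1-t_0}=s_0/\h$ is independent of $t_0$, you land on $\h^2\partial_{s_0}^2\log Z^{\Theta}(t_0)=\tfrac{c(\nu)^2}{c(\nu+1)c(\nu-1)}\cdot\mathrm{RHS}$, and you then shift the entire burden of the $(1-t_0)^2$ prefactor onto the unproven identity $c(\nu+1)c(\nu-1)/c(\nu)^2=(1-t_0)^2$. That identity will not check out: the factor $1/N!$ in \eqref{partleg} is exactly what converts the eigenvalue integral into the Hankel determinant $\det(\mu_{i+j})$, and $c$ there is an honest constant, so the second difference of $\log c$ vanishes and your argument as written proves the equation with prefactor $\h^2$ and no $(1-t_0)^2$. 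You can test this against the paper's own degree-one data: at $s_k=0$ the dilaton equation together with $H_1(\h)=\h^{-2}+\frac{1}{4-\h^2}$ gives $\partial_{s_0}^2\log Z^{\Theta}\big|_{s=0}=\h^{-2}+\big(4(1-t_0)^2-\h^2\big)^{-1}$, while the right-hand side at $s_k=0$ is $\frac{4(1-t_0)^2}{4(1-t_0)^2-\h^2}$; these match after multiplying by $\h^2$, not by $\h^2(1-t_0)^2$. So you must either carry out the normalisation computation and report what it actually yields, or confront the fact that the $(1-t_0)^2$ arises from rescaling $s_0$ and rescaling $N$ as if they were independent, when the two rescalings cancel inside $u=Ns_0$. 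As it stands, your proposal asserts that the deferred Gamma-function computation will ``confirm the factor $(1-t_0)^2$'' when it will do the opposite, so the proof is incomplete precisely at the point that distinguishes the stated prefactor from $\h^2$.
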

\begin{proof}
The main idea is to use the random matrix integral of Theorem~\ref{partint}.  So we first need to incorporate the insertions of the class $\tau_0(1)$ into the integral.

The dilaton equation in Proposition~\ref{diltheta} restricted to stationary invariants and $\tau_0(1)$ insertions is 
\begin{equation}  \label{dilth}
\left\langle\tau_0(1)\cdot\Theta\cdot\prod_{i=1}^l\tau_{b_i}(1)  \prod_{i=l+1}^n\tau_{b_i}(\omega)\right\rangle=(2g-2+n)\left\langle\Theta\cdot\prod_{i=1}^l\tau_{b_i}(1)  \prod_{i=l+1}^n\tau_{b_i}(\omega)\right\rangle
\end{equation}
which is equivalent to the PDE \eqref{dilPDE} so that $Z^{\Theta}_{\bp^1}(\h,\{s_k\},t_0)$ is homogeneous of degree 0 in $\h$, $q=t_0-1$ and $\{s_k\}$.  Hence
$$Z^{\Theta}_{\bp^1}(\h,\{s_k\},t_0)=Z^{\Theta}_{\bp^1}\left(\frac{\h}{1-t_0},\left\{\frac{s_k}{1-t_0}\right\}\right)
$$
where any variable not appearing is set to zero.  Thus in the integral of Theorem~\ref{partint}, we set $N=\frac{1-t_0}{\h}$ and replace $s_k$ by $\frac{s_k}{1-t_0}$.

Now we follow a rather standard argument showing that the random matrix integral satisfies the Toda equation.  Here we write $Z_N=\frac{1}{c}Z^{\Theta}_{\bp^1}(1/N,\{s_k\},t_0)$ where $c$ is the constant in \eqref{partleg} which is unimportant since the following Toda equation is invariant under rescaling $Z(N)$ by a constant.
\begin{equation}  \label{TodaM}
\frac{\partial^2}{\partial s_0^2}\log Z(N)=\frac{Z(N+1)Z(N-1)}{Z(N)^2}
\end{equation}
Define $h_j=h_j(s_0,s_1,...)$ for $j\in\bn$ to be the norms of a set of monic orthogonal polynomials
$$\int^2_{-2}\exp\left(\sum s_jx^j\right)p_j(x,\{s_i\})p_k(x,\{s_i\})dx=\langle p_j,p_k\rangle=h_j^2\delta_{j,k}.
$$
(i) A standard proof found in, for example, \cite{ForLog} equation (5.74) or  \cite{MehMet} equation (3.10), shows
$$Z(N,\{s_k\})=\prod_{j=1}^N h_{j-1}^2.$$
(ii) In \cite{ForLog} equation (5.122) it is proven that
$$\frac{\partial}{\partial s_0}\left(\frac{1}{h_k^2}\frac{\partial}{\partial s_0}h_k^2\right)=\left(\frac{h_{k+1}}{h_k}\right)^2+\left(\frac{h_{k-1}}{h_{k-2}}\right)^2-2\left(\frac{h_{k}}{h_{k-1}}\right)^2,\quad k>1
$$
with boundary relations $\frac{\partial}{\partial s_0}\left(\frac{1}{h_1^2}\frac{\partial}{\partial s_0}h_1^2\right)=\left(\frac{h_{2}}{h_1}\right)^2-\left(\frac{h_{1}}{h_{0}}\right)^2$  and $\frac{\partial}{\partial s_0}\left(\frac{1}{h_0^2}\frac{\partial}{\partial s_0}h_0^2\right)=\left(\frac{h_{1}}{h_0}\right)^2$. \\ 
\\
(iii) It follows immediately that
$$\sum_{i=1}^N\frac{\partial}{\partial s_0}\left(\frac{1}{h_{i-1}^2}\frac{\partial}{\partial s_0}h_{i-1}^2\right)=\frac{h_N^2}{h_{N-1}^2}=\sum_{i=1}^N\left(\left(\frac{h_i}{h_{i-1}}\right)^2-\left(\frac{h_{i-1}}{h_{i-2}}\right)^2\right)
$$
which is equivalent to \eqref{TodaM}.

\vspace{.5cm}
Finally notice that $N+1=\frac{1-t_0}{\h}+1=\frac{1-t_0+\h}{\h}$ so $N\mapsto N+1$ is equivalent to $t_0\mapsto t_0-\h$.  Similarly, $N\mapsto N-1$ is equivalent to $t_0\mapsto t_0+\h$.   Since $s_0$ has been replaced by $\frac{s_0}{1-t_0}$ the second derivative is rescaled by $(1-t_0)^2$ and \eqref{TodaM} becomes
$$\h^2(1-t_0)^2\frac{\partial^2}{\partial s_0^2}\log Z_{\bp^1}^{\Theta}(\h,\{s_k\},t_0)=\frac{Z_{\bp^1}^{\Theta}(t_0+\h)Z_{\bp^1}^{\Theta}(t_0-\h)}{Z_{\bp^1}^{\Theta}(t_0)^2}
$$
as required.
\end{proof}
In Theorem~\ref{Todath} we use the variable $\tilde{s}_0=s_0/(1-t_0)$ in order to get the same Toda equation for both Gromov-Witten invariants of $\bp^1$ and $\Theta$-Gromov-Witten invariants of $\bp^1$.  Getzler \cite{GetTod} proved that the Toda equation for the full partition function $Z_{\bp^1}$ of the usual Gromov-Witten invariants which includes all $t_k$ follows from the Toda equation for $Z$ with $t_k=0$ for $k>0$ using Virasoro relations.  Perhaps an analogous result holds for $Z^{\Theta}_{\bp^1}$.

The Toda equation determines the partition function uniquely from its degree zero part.  In the Toda equation, the derivative $\frac{\partial^2}{\partial s_0^2}$ reduces the degree by 1---it describes the removal of $\tau_0(\omega)^2$---and multiplication by $(1-t_0)^2$ does not change the degree.  Hence, with respect to the decomposition
$$Z_{\bp^1}^{\Theta}(\h,\{s_k\},t_0)=\sum_{d\geq 0}Z_d(\h,\{s_k\},t_0)
$$
the degree $d$ part $Z_d$ is determined by $Z_{d'}$ for $d'<d$.

\subsection{The 1-point series}   \label{1-point}
In this section we calculate the 1-point stationary invariants $\langle\Theta\cdot\tau_{2d-1}(\omega)\rangle^g$. 
It is proven in \cite{GKRMat} that 
$$
W_1(x)=\sum_g\h^{2g-2}W^g_1(x)=\sum_{g,d}\h^{2g-2}\left\langle \Theta\cdot\tau_{2d-1}(\omega) \right\rangle^g_d\cdot\frac{(2d)!}{x^{2d+1}}.
$$
satisfies a differential equation
\begin{equation}  \label{1ptDE}
\frac{d^3W_1}{dx^3}+\frac{8x}{x^2-4}\frac{d^2W_1}{dx^2}+\frac{14x^2-24}{(x^2-4)^2}\frac{dW_1}{dx}+\frac{4x}{(x^2-4)^2}W_1=4\h^{-2}\left(\frac{1}{x^2-4}\frac{dW_1}{dx}+\frac{x}{(x^2-4)^2}W_1\right).
\end{equation}
Equivalently \eqref{1ptDE} can be written 
$$DW_1^g=EW_1^{g-1},\qquad D=\frac{4}{xy^2}\frac{d}{dx}+4,\quad E=\frac{1}{xy^4}\frac{d^3}{dx^3}+\frac{8}{y^2}\frac{d^2}{dx^2}+\frac{14x^2-24}{x}\frac{d}{dx}+4$$
for $y=1/\sqrt{x^2-4}$.  With respect to the basis of monomials $\{y^k,k\in\bz\}$, the differential operator $D$ is diagonal, and the differential operator $E$ is upper triangular.  We have:
$$  Dy^k=4(1-k)y^k,\qquad Ey^k=(1-k)(k-2)^2y^k-4k^2(k-2)y^{k+2}.
$$
From this we deduce that the series $W^g_1(x)$ is a degree $2g+1$ polynomial in $y$.  A Taylor expansion of $y=y(x)$ around $x=\infty$ will retrieve the series in $x$.  The first few polynomials are:
\begin{align*}
W^0_1&=-y\\
W^1_1&=\frac12y^3\\
W^2_1&=\frac18y^3+\frac98y^5\\
W^3_1&=\frac{1}{32}y^3+\frac{45}{16}y^5+\frac{225}{16}y^7\\
W^4_1&=\frac{1}{128}y^3+\frac{819}{128}y^5+\frac{15750}{128}y^7+\frac{55125}{128}y^9\\
W^5_1&=\frac{1}{512}y^3+\frac{1845}{128}y^5+\frac{108675}{128}y^7 +\frac{1157625}{128}y^9+\frac{6251175}{256}y^{11}
\end{align*}
The series $W^g_1(x)$ collects the 1-point invariants of a given genus.  We can also collect the 1-point invariants of a given degree $d$. Define 
$$G_d=\sum_{g}\h^{2g-2}\left\langle \Theta\cdot\tau_{2d-1}(\omega) \right\rangle^g_d,\quad d>0
$$
so that $\displaystyle W_1(x)=\sum_d G_d\frac{(2d)!}{x^{2d+1}}$.
The differential equation \eqref{1ptDE} implies the following recursion for $G_d$ which uniquely determines $G_d$ from the initial conditions $G_{-1}=0$ and $G_0=-\h^{-2}$.
$$d^2(1-(d-\tfrac12)^2\h^{2})G_d=(1-(2d^2-3d+\tfrac32)\h^{2})G_{d-1}+\h^2G_{d-2}
$$
The recursion implies that $G_d$ is a rational function of $\h^2$ with simple poles at $\h^2=0$ and $\h^2=4/(2m-1)^2$ for $m=1,...,d$.  The low degree series are as follows.
\begin{align*}
G_0&=-\h^{-2}\\
G_1&=-\h^{-2}+\frac{1}{4-\h^2}\\
G_2&=\frac{1}{4}\left(-\h^{-2}+\frac{13}{8}\frac{1}{4-\h^2}+\frac{3}{8}\frac{1}{4-9\h^2}\right)\\
G_3&=\frac{1}{36}\left(-\h^{-2}+\frac{135}{64}\frac{1}{4-\h^2}+\frac{99}{128}\frac{1}{4-9\h^2}+\frac{15}{128}\frac{1}{4-25\h^2}\right)\\
G_4&=\frac{1}{4!^2}\left(-\h^{-2}+\frac{2577}{1024}\frac{1}{4-\h^2}+\frac{1179}{1024}\frac{1}{4-9\h^2}+\frac{305}{1024}\frac{1}{4-25\h^2}+\frac{35}{1024}\frac{1}{4-49\h^2}\right)
\end{align*}
It is shown in \cite{GKRMat} that $W_1(x)$ is the large $N$ ($=1/\h$) asymptotic expansion of a sequence of Taylor series in $x^{-1}$.
$$W_1(x)=N(P_{N-1}(x)Q_N'(x)-P_{N}(x)Q_{N-1}'(x))
$$
where $P_N(x)$ is the $N$th Legendre polynomial and $Q_N(x)$ the $N$th Legendre function of the second kind.  The latter is analytic at $x=\infty$ and satisfies $Q_N'(x)=O(x^{-N-2})$, hence $P_{N-1}(x)Q_N'(x)$ and $P_{N}(x)Q_{N-1}'(x)$ are both Taylor series in $x^{-1}$.  The poles of $G_d$ corresponding to $N=n-\frac12$, a half-integer, arise because $Q_N(x)$ diverges as $N$ tends to a half-integer.

The behaviour of the 1-point series $G_d$ is rather different than the behaviour of the 1-point series for the usual Gromov-Witten invariants of $\bp^1$.  It is proven in \cite{OPaGro} that 
$$ H_d=\sum_{g}\h^{2g-2}\left\langle\tau_{2g-2+2d}(\omega) \right\rangle^g_d=\frac{\h^{-2}}{d!^2}S(\h)^{2d-1},\quad S(\h)=\frac{\sinh{\frac{\h}{2}}}{\h/2}.
$$
The methods of \cite{OPaGro} use a Fock space representation of the Gromov-Witten invariants of $\bp^1$ which produces the 1-point series $H_d$.  The difference in behaviour of the 1-point series $G_d$ and $H_d$ suggests that a Fock space approach to $\Theta$-Gromov-Witten invariants of $\bp^1$ might not exist.  It was shown in \cite{PanTod} that the formula for $H_d$ is a consequence of the Toda equation and the divisor equation.  Although the $\Theta$-Gromov-Witten invariants of $\bp^1$ satisfy the Toda equation, they satisfy a different type of divisor equation.

\subsection{Hodge classes and the 0-point series}  \label{0-point}

Degree zero stable maps to a target variety $X$ consist of a stable curve and its image in $X$, hence
$${\modm}_{g,n}(X,0)\cong\overline{\modm}_{g,n}\times X.
$$
The virtual class is strictly smaller than the moduli space when $\dim X>0$.  It is given by the top Chern class of a rank $g\times\dim X$ bundle
$$[{\modm}_{g,n}(X,0)]^{vir}=c_{\text{top}}(\Lambda^\vee\boxtimes TX)$$
where $\Lambda\to\overline{\modm}_{g,n}$ is the rank $g$ Hodge bundle with fibres over smooth curves given by the vector space of holomorphic differentials and $\Lambda^\vee$ is its dual.  When $X=\bp^1$, since $\boxtimes$ is the tensor product of the pull-back bundles we have $c(\Lambda^*\boxtimes TX)=\pi_1^*c(\Lambda^*)\pi_2^*c(T\bp^1)=\pi_1^*(1-\lambda_1+\lambda_2+...+(-1)^g\lambda_g)\pi_2^*(1+2\omega)$ hence the top Chern class is $c_g(\Lambda^*\boxtimes T\bp^1)=(-1)^g(\pi_1^*\lambda_g-2\pi_1^*\lambda_{g-1}\pi_2^*\omega)$ and the push-forward is given by
$$ p_*[{\modm}_{g,n}(\bp^1,0)]^{vir}=(-1)^{g-1}2\lambda_{g-1}\in H^*(\overline{\modm}_{g,n}).
$$
We can use knowledge of the $\Theta$-Gromov-Witten invariants of $\bp^1$ to learn more about the structure of the classes $\Theta_{g,n}$ and in particular their integral next to the Hodge class.  We have  
$$ \int_{\overline{\modm}_{g}}\lambda_{g-1}\Theta_{g}=(-1)^{g-1}\frac12F_g
$$
This is analogous to calculations of the Hodge integrals $ \int_{\overline{\modm}_{g,n}}\lambda_{g-1}\prod_i\psi_i^{m_i}
$ using Gromov-Witten invariants of $\bp^1$ carried out in \cite{FPaHod,GPaVir}.

The 0-point series  
$$ \sum\h^{2g-2}F_g:=\sum\h^{2g-2}\left\langle \Theta \right\rangle ^{g}_0=\sum\h^{2g-2}\int_{[{\modm}_{g}(\bp^1,0)]^{vir}} \Theta^{\bp^1}_{g}
$$
can be determined by the Toda equation.

Consider the degree 1 stationary $\Theta$-Gromov-Witten invariants of $\bp^1$.  We can calculate 
$$H_1(\h):=\sum\langle\Theta\cdot\tau_0(\omega)\tau_0(\omega)\rangle^g_1\h^{2g-2}
$$
from the quantum curve together with
$$G_1(\h)=\sum_{g}\h^{2g-2}\left\langle \Theta\cdot\tau_{1}(\omega) \right\rangle^g_1
=-\h^{-2}+\frac{1}{4-\h^2}$$
as follows.  The degree 1 component of the wave function calculated in \eqref{deg1} is
$$\hat{\psi}_1(\h)=\frac{2(1-\h)}{\h(\h-2)}=\h G_1(\h)+\frac12\h^2H_1(\h).
$$
hence
$$H_1(\h)=\h^{-2}+\frac{1}{4-\h^2}.
$$
The Toda equation gives
$$\frac{Z_0(\frac{\h}{1-\h})Z_0(\frac{\h}{1+\h})}{Z_0(\h)^2}
=\sum_{g}\langle\Theta\cdot\tau_0(\omega)\tau_0(\omega)\rangle^g_1\h^{2g}=\h^2H_1(\h)=\frac{4}{4-\h^2}.
$$
Hence
$$\log Z_0=\frac14\log\h-\frac{1}{64}\h^2+\frac{1}{256}\h^4-\frac{17}{6144}\h^6+\frac{31}{8192}\h^8-\frac{691}{81920}\h^{10}+\frac{5461}{196608}\h^{12}+...
$$
and it is not difficult to show the exact expression
$$ F_g=\frac{(1-2^{-2g})}{g(g-1)}B_{2g}.
$$
A generating series looks nicest when we include the $g=1$ term which can be achieved by considering $\langle\tau_0(1)\rangle^g=(2g-2)F_g$.
$$
\sum_{g>0} \h^{2g-1}\frac{\langle\tau_0(1)\rangle^g}{(2g-1)!}=\tanh\frac{\h}{4}.
$$

Thus we have 
$$\sum_{g>1}\h^{2g-2}\int_{\overline{\modm}_{g}}\lambda_{g-1}\Theta_{g}=\frac{1}{128}\h^2+\frac{1}{512}\h^4+\frac{17}{12288}\h^6+\frac{31}{16384}\h^8+\frac{691}{163840}\h^{10}+...
$$
and $\int_{\overline{\modm}_{1,1}}\lambda_0\Theta_{1,1}=\frac18$.

For $n>0$, the degree 0 $\Theta$-Gromov-Witten invariants involve only $\tau_0(1)$ insertions since
$$\deg\left\{\prod_{i=1}^l\tau_{b_i}(1)  \prod_{i=l+1}^n\tau_{b_i}(\omega)\right\}=0=\dim \left\{[\overline{\modm}_{g,n}(\bp^1,0)]^{\text{vir}}\cap \Theta_{g,n}^{\bp^1}\right\}\quad\Leftrightarrow\quad l=n \text{\ and\  }b_i=0.$$ 
They are determined by the 0-point invariants via the dilaton equation \eqref{dilth} hence
$$Z_0=\exp\left\{\sum\left(\frac{\h}{1-t_0}\right)^{2g-2}F_g\right\}.$$
Likewise, the Hodge integrals are determined from the integrals $ \int_{\overline{\modm}_{g}}\lambda_{g-1}\Theta_{g}$ for $g>1$ and $\int_{\overline{\modm}_{1,1}}\Theta_{1,1}$ via
$ \int_{\overline{\modm}_{g,n+1}}\lambda_{g-1}\Theta_{g,n+1}=(2g-2+n)\int_{\overline{\modm}_{g,n}}\lambda_{g-1}\Theta_{g,n}$.

\end{document}